\documentclass[reqno]{amsart}
\usepackage[foot]{amsaddr}

\usepackage[hidelinks]{hyperref}
\usepackage{url}

\usepackage{amssymb}
\usepackage{amscd}
\usepackage{amsthm}
\usepackage{setspace}
\usepackage{enumerate}
\usepackage{graphicx}
\usepackage{mathtools}
\usepackage{tcolorbox} 
\usepackage{subcaption} 
\usepackage{siunitx}
\usepackage{tikz-cd}
\usepackage{bm,blkarray}
\numberwithin{equation}{section}
\usepackage{algorithm}
\usepackage{algpseudocode}

\usepackage[nameinlink]{cleveref}

\usepackage{mathtools}
\usepackage[tableposition=top]{caption}
\usepackage{booktabs,dcolumn}

\usepackage{refcount}

\renewcommand\a{\mathbf{a}}

\newcommand\C{\mathbb{C}}

\newcommand\dist{{\operatorname{dist}}}

\newcommand\eps{\varepsilon}
\renewcommand\epsilon\varepsilon

\renewcommand{\SS}{\mathbb S}

\newcommand{\sph}[1]{ {\mathbb S^{#1}} }

\newcommand{\ks}{\textnormal{KS}}

\usepackage{xcolor}      %
\usepackage{listings}    %

\definecolor{codegreen}{rgb}{0,0.6,0}
\definecolor{codegray}{rgb}{0.5,0.5,0.5}
\definecolor{codepurple}{rgb}{0.58,0,0.82}
\definecolor{white}{rgb}{1,1,1}

\lstdefinestyle{mystyle}{
    backgroundcolor=\color{white}, 
    commentstyle=\color{codegreen},
    keywordstyle=\color{magenta}, numberstyle=\tiny\color{codegray}, stringstyle=\color{codepurple}, basicstyle=\ttfamily\footnotesize, breakatwhitespace=false,  breaklines=true,                 
    captionpos=b,                    
    keepspaces=true,                 
    numbers=left,                    
    numbersep=5pt,                  
    showspaces=false,                
    showstringspaces=false,
    showtabs=false,                  
    tabsize=2,
    frame=single,
    rulecolor=\color{black},
    title=\lstname
}

\title[Condition number of Fourier submatrices]{On the exponential rate of the condition number of Fourier submatrices and Vandermonde matrices}

\author{Rikhav Shah}
\address{Department of Mathematics, Massachusetts Institute of Technology, Cambridge, MA, 02139 USA.}
\email{rdshah@mit.edu}
\author{John Urschel}
\email{urschel@mit.edu}
\subjclass[2020]{15A12, 65F35, 65T50}
\keywords{Catalan's constant, condition number, discrete Fourier transform, Lebesgue constant, \\ \indent orthogonal polynomials, Vandermonde matrix}

\newtheorem{theorem}{Theorem}[section]

\newtheorem{lemma}[theorem]{Lemma}

\newtheorem{proposition}[theorem]{Proposition}

\newtheorem{corollary}[theorem]{Corollary}

\newtheorem{remark}[theorem]{Remark}

\AddToHook{env/lemma/begin}{\crefalias{theorem}{lemma}}
\AddToHook{env/corollary/begin}{\crefalias{theorem}{corollary}}

\crefname{theorem}{Theorem}{Theorems}
\crefname{lemma}{lemma}{lemmas}
\crefname{corollary}{corollary}{corollaries}
\crefname{equation}{}{}
\crefname{section}{section}{sections}
\crefname{subsection}{subsection}{subsections}
\crefname{figure}{figure}{figures}
\crefname{proposition}{proposition}{propositions}
\DeclareMathOperator*{\argmin}{arg\,min}
\DeclareMathOperator*{\argmax}{arg\,max}
\DeclareMathOperator{\erf}{erf}

\usepackage{mleftright}
\newcommand{\abs}[1]{\mleft|#1\mright|}

\newcommand{\magn}[1]{\left\|#1\right\|}
\newcommand{\rmagn}[1]{\|#1\|}

\newcommand{\pare}[1]{\mleft(#1\mright)}

\newcommand{\set}[1]{{\left\{{#1}\right\}}}

\newcommand{\bmat}[1]{\begin{bmatrix}#1\end{bmatrix}}
\newcommand{\pmat}[1]{\begin{pmatrix}#1\end{pmatrix}}

\newcommand{\floor}[1]{\mleft\lfloor#1\mright\rfloor}

\newcommand{\wrt}{\,\textnormal d}

\newcommand{\spliteq}[2]{\begin{equation}#1\begin{split}#2\end{split}\end{equation}}
\newcommand{\eq}[1]{\begin{equation}{#1}\end{equation}}
\DeclareMathOperator{\ball}{Disk}
\newcommand{\sdist}{d_{\mathbb S^1}}
\newcommand{\inr}[2]{\left<#1,#2\right>}

\DeclareMathOperator{\Cl}{Cl_2}
\DeclareMathOperator{\supp}{supp}

\renewcommand{\a}{\bm a}

\begin{document}

\begin{abstract}
The discrete Fourier transform matrix is one of the most important matrices in linear algebra, and submatrices of it arise in a variety of applications. Though the discrete Fourier transform matrix is unitary, its submatrices can be exponentially ill-conditioned, an obstacle to accurate computation.
This work resolves the exact rate of the exponential ill-conditioning for square submatrices with contiguous rows and columns. As a consequence, we obtain a tight upper bound of $2 G/\pi$ on the exponential rate for all submatrices with contiguous columns, or, equivalently, all Vandermonde submatrices with distinct support points, where $G$ is Catalan's constant. These results follow from a more general analysis of Vandermonde and Vandermonde-like matrices for which exact estimates for exponential ill-conditioning are developed in terms of logarithmic potentials.

\end{abstract}

\maketitle

\section{Introduction}

The discrete Fourier transform matrix 
\[F = \left(e^{ \frac{2 \pi i (j-1)(k-1)}{N} } \right)_{j,k =1}^{N} \in \mathbb{C}^{N \times N},\]
where $i$ is the imaginary unit, is one of the most important matrices in linear algebra. Submatrices of $F$ appear in applications where only a specific subset of frequencies of a given signal is measured or processed. The most prevalent application is imaging \cite{b0,b1,b2,b3,b4,b5,b6}, 
though submatrices of the Fourier matrix also appear in many other settings, including wireless communication \cite{b7,b8,b9,b10}, Fourier extension methods \cite{b11,b12,b13,b14,b15,b16}, and finite frames \cite{b17,b18}. Unfortunately, while $F/\sqrt{N}$ is a unitary matrix, its submatrices are known to be numerically low-rank (see Edelman, McCorquodale, and Toledo \cite{b19}) and exponentially ill-conditioned. This exponential ill-conditioning was first shown for a submatrix of general shape by Moitra \cite{b20}, though this question has been studied by many authors in a variety of special cases, both before and after Moitra's work \cite{b11,b21,b0,b22,b23,b24,b25,b26,b27,b28,b29,b30}. Most notably, Barnett proved that the condition number of any $p \times q$ contiguous submatrix is at least $\exp\pare{\tfrac{\pi}{2}\pare{\min(p,q) - pq/N}}$ \cite{b31}. In addition, he performed numerical calculations which suggested that his bound was reasonably accurate for tall and skinny matrices but became quite inaccurate for square submatrices (see \Cref{a0}). Here, we obtain the exact exponential rates governing the condition number of contiguous, square $p \times p$ submatrices (see \Cref{a1}). In particular, we resolve not just the exponential rate of the condition number, but the polynomial factor as well. We note that the square submatrix setting is of special importance, as the condition number of a $p\times q$ contiguous submatrix is upper bounded by the condition number of a $\max(p,q)\times \max(p,q)$ contiguous submatrix.

Our analysis also relaxes the requirement that the entire submatrix be contiguous; we only require that the columns form a contiguous set. We omit studying completely arbitrary submatrices for two reasons. First, whenever $N$ is composite, there are singular submatrices (for instance, for $N = rs$, the $2 \times 2$ submatrix formed by the $1^{st}$ and $(r+1)^{th}$ rows and $1^{st}$ and $(s+1)^{th}$ columns is singular), but there are no singular submatrices with contiguous columns. Interestingly, when $N$ is prime, all submatrices are nonsingular by Chebotar\"ev's theorem \cite{b32}. Second, the submatrices with contiguous columns are precisely the matrices which are themselves Vandermonde matrices with distinct nodes, up to a permutation of the columns. In applications where Fourier submatrices appear, they invariably seem to be themselves Vandermonde matrices. In the setting of contiguous columns only, we again recover the correct exponential rate of the condition number, though we do not obtain the polynomial factor (\Cref{a2}).

We derive the aforementioned bounds from a more general analysis of Vandermonde matrices with nodes on the unit circle (which we further relax later in \Cref{a2}). Such matrices are often called non-uniform discrete Fourier transform matrices, see \cite[Section 1 \& Remark 2]{b33} for applications where such matrices arise and the influence of the condition number. Let $V(z_0,\ldots,z_n) =( z_k^{j} )_{j,k=0}^n$ be the square Vandermonde matrix with nodes $z_0,\ldots,z_n \in \mathbb{C}$, $\mathbb{S}^1 \subset \mathbb{C}$ be the unit circle, and $\sdist(z,w)$ be the arc length between $z,w \in \mathbb{S}^1$. We prove the following theorem.

\begin{theorem}\label{a3}
Let $z_0,\ldots,z_n \in \mathbb{S}^1$ and $ \min_{j \ne k} \sdist(z_j,z_k) \ge \eps>0$. Then
 \eq{\label{a4}
 \frac{\log\magn{V(z_0,\ldots,z_n)^{-1}}}n \le \frac4{n \eps} \int_{0}^{\frac{\eps n}4} \log \cot\phi\wrt\phi -\frac{C\log n}n \pm \frac{O(\log (n \eps(2\pi - n \eps)))}n}%
 for some constant $C \in \left[ \tfrac{1}{2}, 1\right]$. Furthermore, equality is achieved, with $C = 1$, when $z_j = e^{i j \eps}$ for $j \in \{0,\ldots,n\}$.
\end{theorem}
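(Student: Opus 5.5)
The plan is to reduce $\magn{V^{-1}}$ to Lagrange interpolation data and then estimate products of chord lengths by Riemann sums of a logarithmic potential. Write $V = V(z_0,\ldots,z_n)$. The $k$-th row of $V^{-1}$ is the coefficient vector of the Lagrange basis polynomial
\[ \ell_k(z)=\prod_{j\ne k}\frac{z-z_j}{z_k-z_j}. \]
Since the operator norm is comparable to the largest row norm up to a factor $\sqrt{n+1}$, and the coefficient $\ell_2$ norm of $\ell_k$ equals its $L^2(\mathbb S^1)$ norm, which lies between $\max_{\mathbb S^1}|\ell_k|/\sqrt{n+1}$ and that maximum, we get
\[ \magn{V^{-1}} = \mathrm{poly}(n)\,\max_k \max_{|z|=1}|\ell_k(z)|. \]
All the polynomial slack is absorbed into the $C\log n$ term, and this is where $C\in[\tfrac12,1]$ comes from. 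So the task is to bound
\[ \max_k \frac{\max_{|z|=1}\prod_{j\ne k}|z-z_j|}{\prod_{j\ne k}|z_k-z_j|}. \]

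\emph{Upper bound for general $\eps$-separated nodes.} Use $|e^{i\alpha}-e^{i\beta}| = 2\sin(\sdist/2)$.

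For the denominator, at most two nodes have $\sdist(z_k,z_j)\in[m\eps,(m+1)\eps)$ for each $m$. Hence $\prod_{j\ne k}|z_k-z_j|$ is at least the product for the two-sided equispaced arrangement, with the caveat that $\sin$ is not monotone past $\pi/2$. I would handle this by a rearrangement argument on the distances ordered around the circle: sorted distances satisfy $d_{(m)}\ge \lceil m/2\rceil\eps$ and $d_{(m)}\le\pi$.

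For the numerator, $\prod_j |z-z_j|$ over $\eps$-separated points is bounded by comparison with $\sup$ over arcs. I would apply a variational/exchange argument showing that the extremal configuration clusters the nodes into one arc of equispaced points.

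Both products then become Riemann sums of $\log(2\sin(\theta/2))$. The ratio of the maximal numerator, attained opposite the cluster, to the minimal denominator, attained at the center of the cluster, gives a sum which by Euler–Maclaurin equals
\[ \frac{2}{\eps}\int_0^{n\eps/4}\log\frac{\cos\phi}{\sin\phi}\,d\phi\cdot(1+o(1)) \]
up to endpoint corrections. Those corrections are $O(\log(n\eps(2\pi-n\eps)))$, coming from the logarithmic singularities of $\log\sin$ at $0$ and at $\pi$, i.e.\ when $n\eps$ is near $0$ or near $2\pi$. Concretely, I expect the numerator maximum over $z$ to be $\sum_j\log|2\cos(\cdot)|$ type terms after reflecting $z\mapsto -z$, which yields the $\log\cot$ integrand when subtracted from the denominator's $\log\sin$ sum.

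\emph{Equality for $z_j=e^{ij\eps}$.} Here everything is explicit. The denominator is $\prod_{j\ne k}2\sin(|j-k|\eps/2)$, minimized at the central index $k\approx n/2$. The numerator maximum is attained near the antipode of the arc's midpoint and is $\prod_j 2\cos((j-n/2)\eps/2)$. Euler–Maclaurin with the $\log$ singularity handled via Stirling-type asymptotics gives the $-\log n$ term exactly. The matching lower bound for $\magn{V^{-1}}$ is simply $\magn{V^{-1}}\ge \max_k\|\ell_k\|_{L^2}$, and a Bernstein-type inequality gives $\|\ell_k\|_{L^2}\gtrsim n^{-1/2}\|\ell_k\|_\infty$. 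This is exactly where the two-sided constant shows up, and tightening it to $C=1$ requires locating the maximum precisely.

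\emph{Main obstacle.} The hard step is the extremal claim for general separated nodes: that the worst configuration of the numerator/denominator ratio is the contiguous equispaced arc, uniformly across all $n\eps\in(0,2\pi)$. I would first try to prove it via the potential-theoretic formulation. The quantity $\tfrac1n\log$ of the ratio is $U^\mu(z)-U^\mu(z_k)$ for the node counting measure $\mu$, whose density is at most $1/(n\eps)$ per unit arc length. Among measures with this density cap, the potential difference is maximized by the uniform measure on an arc of length $n\eps$, by a bathtub/rearrangement argument, since $\log|\cdot|$ is monotone in distance on each half circle. Discretization errors would be controlled separately.
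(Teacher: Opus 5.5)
Your route to the general upper bound is viable, and it differs from the paper only in the extremal step. Bounding the denominator by the two-sided band count $d_{(m)}\ge\lceil m/2\rceil\varepsilon$, and the numerator by packing around $-z$ (at most $\lfloor 2r/\varepsilon\rfloor+1$ nodes lie within arc distance $r$ of $-z$), is legitimate. Both extremal configurations are the same centred arc evaluated at $-z_k$, so the decoupling loses only an amount absorbed in $O(\log(n\varepsilon(2\pi-n\varepsilon)))$. The paper instead slides each node toward $z_k$, using that $|L_k(1)|$ is monotone in each angle (Lemma~\ref{a20}). So this step is not the real obstacle. The continuum bathtub argument you propose for it would also discard exactly the logarithmic terms the statement is about.

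What needs fixing in the general part is the bookkeeping. Put $R:=\frac4\varepsilon\int_0^{n\varepsilon/4}\log\cot\phi\,d\phi$. The main term must be $R$ with an additive error, not $\frac2\varepsilon\int(\cdots)\,(1+o(1))$, since that would swamp every logarithmic term. Also, the singularity of $\log(2\sin(x/2))$ at $0$ gives each of the two half-sums in $U_k(k\varepsilon)$ a Stirling correction $\frac12\log\varepsilon$ (Lemma~\ref{a39}). This $\log\varepsilon\approx-\log n$ is not part of the $O(\log(n\varepsilon(2\pi-n\varepsilon)))$ error. It is what makes the equispaced value $M:=\max_{k,z}|L_k|\approx e^{R}\varepsilon$, and hence what turns $\sqrt{n+1}\,M$ into the bound with $C=\frac12$. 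Without it you would get $C=-\frac12$.

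The genuine gap is the claimed equality with $C=1$ for $z_j=e^{ij\varepsilon}$. Below, $\approx$ means up to factors absorbed in the error term. Your tools give only $n^{-1/2}M\lesssim\|V^{-1}\|\lesssim n^{1/2}M$, i.e.\ $C\in[\frac12,\frac32]$. Locating the maximum more precisely cannot close this. The true value is $\|V^{-1}\|\approx M$, while the largest column has $\|V^{-1}e_{\lfloor n/2\rfloor}\|\approx\varepsilon^{1/4}M$. So even exact column norms inserted into $\max_k\|V^{-1}e_k\|\le\|V^{-1}\|\le\sqrt{n+1}\max_k\|V^{-1}e_k\|$ only yield $C\in[\frac34,\frac54]$. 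Three ingredients are missing.

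First, you need a Laplace-type evaluation of $\int_0^{2\pi}|L_k|^2\,d\theta$. $U$ is convex on $(n\varepsilon,2\pi)$, and the paper traps it between two parabolas centred at $n\varepsilon/2+\pi$. Their curvatures are of order $1/\varepsilon$, up to a $\log(2/\alpha)$ factor with $\alpha=n\varepsilon/(2\pi)$. Hence $|L_k|$ is a bump of width $\asymp\sqrt\varepsilon$ and $\|L_k\|_{L^2}\approx\varepsilon^{1/4}\max|L_k|$ (Lemmas~\ref{a30}, \ref{a31}, \ref{a43}, \ref{a33}).

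Second, the upper bound needs the full Frobenius sum $\sum_k\|L_k\|_{L^2}^2$, combined with the Gaussian decay of $e^{2U_k(k\varepsilon)}$ in $k-n/2$ on the scale $\sqrt n$. Then only $\asymp\sqrt n$ columns contribute and the sum is $\approx M^2$ (Lemma~\ref{a34}).

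Third, the lower bound needs a test vector superposing the $\asymp\sqrt n$ central columns with phases $(-1)^{n-j}e^{ijn\varepsilon/2}$. These phases make the adjusted Lagrange polynomials real, positive and mutually comparable on $(n\varepsilon+\varepsilon,2\pi-\varepsilon)$, which carries all but an exponentially small part of their $L^2$ mass. They therefore add coherently, giving $\|V^{-1}x\|\gtrsim|S|^{1/2}\|V^{-1}e_{\lfloor n/2\rfloor}\|\approx M$ (Lemmas~\ref{a22}, \ref{a45}).

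Without these, your argument yields the rate and the general bound with $C=\frac12$, but not the stated equality.
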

The proof of \Cref{a3} is surprisingly simple in spirit, using only Lagrange interpolating polynomials, logarithmic potentials, and Riemann summation. Let $[N] = \{1,\ldots,N\}$ and, for a matrix $A \in \mathbb{C}^{N \times N}$ and subsets $S,T \subset [N]$, let $A_{S,T}$ denote the $|S| \times |T|$ submatrix of $A$ with rows indexed by $S$ and columns indexed by $T$ and let $\kappa(A_{S,T}) = \sigma_1(A_{S,T})/\sigma_{\min(|S|,|T|)}(A_{S,T})$ be its condition number. A subset $S \subset [N]$ is said to be {\it cyclically contiguous} if it equals a set of consecutive integers modulo $N$. As a corollary to \Cref{a3}, we obtain a tight bound on how ill-conditioned a submatrix of the Fourier matrix with contiguous columns (or rows) can be.

\begin{corollary}\label{a1}
Let $F = \big(e^{ \frac{2 \pi i (j-1)(k-1)}{N} } \big)_{j,k =1}^{N} \in \mathbb{C}^{N\times N}$ be the Fourier matrix, $S \subset [N]$ be an arbitrary subset, $T \subset[N]$ be a cyclically contiguous subset, and $\alpha = \max(|S|,|T|)/N$. Then
\[\log \kappa \left(F_{S,T} \right) \le  \frac{2N}{\pi} \int_{0}^{\frac{\alpha \pi}{2}} \log \cot \phi\wrt\phi  \pm O(\log \alpha(1-\alpha)).\]
If both $S,T \subset[N]$ are cyclically contiguous subsets, then 
\[\log \kappa \left(F_{S,T} \right) \le  \frac{2N}{\pi} \int_{0}^{\frac{\alpha \pi}{2}} \log \cot \phi\wrt\phi - \frac{1}{2}\log (N) \pm O(\log \alpha(1-\alpha)),\]
and equality is achieved when $|S| = |T|$.
\end{corollary}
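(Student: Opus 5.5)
Everything reduces to \Cref{a3}. Write $\omega = e^{2\pi i/N}$. If $T=\{t_0,t_0+1,\dots,t_0+q-1\}$ is cyclically contiguous, then $F_{S,T}=D_S\,W$ with $D_S=\mathrm{diag}(\omega^{(s-1)(t_0-1)})_{s\in S}$ unitary and $W=(\omega^{(s-1)k})_{s\in S,\,0\le k<q}$. Up to transposition, $W$ is the first $q$ rows of the Vandermonde matrix with nodes $z_s=\omega^{s-1}$, $s\in S$. These nodes are distinct points of $\mathbb S^1$ with $\min\sdist\ge \eps:=2\pi/N$. Set $m=\max(|S|,|T|)$. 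Two cases arise. If $|S|\le q$, then $W^T$ is a row submatrix of the square $q\times q$ Vandermonde matrix, with nodes obtained by adjoining $q-|S|$ further $N$-th roots of unity. If $|S|>q$, then $W$ is a column submatrix of the square $|S|\times|S|$ Vandermonde matrix with nodes $\{z_s\}$. In either case $F_{S,T}$, up to unitaries and transposition, is a rectangular submatrix of a square $m\times m$ Vandermonde matrix $V$ with $N$-th root of unity nodes. Interlacing gives $\sigma_{\min}(F_{S,T})\ge\sigma_{\min}(V)$ and $\sigma_1(F_{S,T})\le\sigma_1(V)\le\sqrt N$. The last bound holds because $V$ is a submatrix of $F$, so $\|V\|\le\|F\|=\sqrt N$. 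Hence $\kappa(F_{S,T})\le\sqrt N\,\|V^{-1}\|$.

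Next apply \Cref{a3} with $n=m-1$ and $\eps=2\pi/N$. Then $n\eps/4=\tfrac{\pi(m-1)}{2N}=\tfrac{\alpha\pi}{2}-\tfrac{\pi}{2N}$, and the prefactor is $n\cdot\tfrac{4}{n\eps}=\tfrac{2N}{\pi}$. This gives
\[\log\|V^{-1}\|\le \tfrac{2N}{\pi}\int_0^{\alpha\pi/2-\pi/(2N)}\log\cot\phi\wrt\phi - C\log(m-1)\pm O(\log(n\eps(2\pi-n\eps))).\]
Shifting the upper limit costs $\tfrac{2N}{\pi}\cdot\tfrac{\pi}{2N}\cdot|\log\cot(\alpha\pi/2)|$, which is $O(|\log\alpha(1-\alpha)|)$. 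The error term satisfies $n\eps(2\pi-n\eps)\asymp\alpha(1-\alpha)$. For the first claim, combine $\tfrac12\log N$ from $\sigma_1$ with $-C\log m\le-\tfrac12\log m$. The difference is $\tfrac12\log(N/m)=\tfrac12\log(1/\alpha)$, which is absorbed into the $O(\log\alpha(1-\alpha))$ term.

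For the doubly contiguous case the factor $\sqrt N$ is too lossy: we need an extra $-\tfrac12\log N$. First I would reduce to the square case. By the remark in the introduction, a contiguous $p\times q$ block is itself a submatrix of a contiguous $m\times m$ block, so interlacing again gives $\kappa(F_{S,T})\le\kappa(F_{S',T'})$. Now $S',T'$ are contiguous, so after unitary diagonal scalings $F_{S',T'}$ is exactly $V(1,e^{i\eps},\dots,e^{i(m-1)\eps})$, the extremal configuration in \Cref{a3}. For this matrix $C=1$ and equality holds, so $\log\|V^{-1}\|=\tfrac{2N}{\pi}\int_0^{\alpha\pi/2}\log\cot\phi\wrt\phi-\log m\pm O(\cdot)$. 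It remains to show $\log\|V\|=\tfrac12\log m\pm O(\log\alpha(1-\alpha))$. The lower bound is immediate, since every column has norm $\sqrt m$. For the upper bound, $V$ is a contiguous block of $F$, a "prolate" matrix. I would bound $\|V\|^2\le\|V\|_1\|V\|_\infty$, where row sums are geometric sums bounded by $\sum_k\min(m,1/\|k/N\|)\lesssim m+N\log(\cdot)$. This is crude. Better is $\|V\|\le\|F\|=\sqrt N$, which together with $\sqrt m\le\|V\|$ gives $\log\|V\|=\tfrac12\log m+O(\log(1/\alpha))$. That suffices, since $\log(1/\alpha)$ is absorbed. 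Combining, $\log\kappa=\tfrac{2N}{\pi}\int\cdots-\log m+\tfrac12\log m\pm O=\cdots-\tfrac12\log N\pm O(\log\alpha(1-\alpha))$, again using $\log m=\log N+\log\alpha$. When $|S|=|T|$ no interlacing loss occurs, so equality holds.

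The main obstacle is the bookkeeping of polynomial factors. Every discrepancy between $\log m$ and $\log N$, and every shift of the integral's endpoint, must come out as $O(\log\alpha(1-\alpha))$ and not as something like $O(\log N)$. I would check carefully the regime $\alpha\to1$, where $\log\cot$ blows up at the endpoint $\pi/2$; there I would use $\int$ near $\pi/2$ of $\log\cot$ being integrable, with a $\log(1-\alpha)$-size contribution.
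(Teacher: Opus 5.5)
Your proposal is correct and is essentially the paper's proof: you pad $F_{S,T}$ via interlacing to a square $m\times m$ Vandermonde block with $N$-th root of unity nodes, use $\sqrt m\le\sigma_1\le\sqrt N$, and apply \Cref{a3} with $n=m-1$, $\eps=2\pi/N$ ($C\ge\tfrac12$ for the first bound, $C=1$ for the equispaced second case), absorbing the endpoint shift, $\log(N/m)$ and $\log(n\eps(2\pi-n\eps))$ into $O(\log\alpha(1-\alpha))$. The one misstep is your aside that $\sqrt N$ is ``too lossy'' in the doubly contiguous case; as you then show yourself, it costs only $\tfrac12\log(1/\alpha)$, which is exactly how the paper handles both claims.
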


The quantity $\displaystyle{\frac{2}{ \pi} \int_{0}^{\frac{\alpha \pi}{2}} \log \cot \phi\wrt\phi}$ is maximal for $\displaystyle{\alpha = \frac{1}{2}}$, where it equals $\displaystyle{\frac{2 G}{\pi}}$, where $\displaystyle{G = \sum_{k=0}^\infty \frac{(-1)^k}{(2k+1)^2}}$ is Catalan's constant. As a result, we also have the following uniform upper bound.

\begin{corollary}\label{a5}
Let $F = \big(e^{ \frac{2 \pi i (j-1)(k-1)}{N} } \big)_{j,k =1}^{N} \in \mathbb{C}^{N\times N}$ be the Fourier matrix, $S \subset [N]$ be an arbitrary subset, and $T \subset[N]$ be a cyclically contiguous subset. Then
\[\log \kappa \left(F_{S,T} \right) \le  \frac{2GN}{\pi} \pm O(1),\]
where $G$ is Catalan's constant. If both $S,T \subset[N]$ are cyclically contiguous subsets, then
\[\log \kappa \left(F_{S,T} \right) \le  \frac{2GN}{\pi}  - \frac{1}{2} \log (N)  \pm O(1),\]
and equality is achieved when $|S| = |T|= \lceil N/2 \rceil$ and both $S,T \subset[N]$ are cyclically contiguous subsets.
\end{corollary}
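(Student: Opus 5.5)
The plan is to deduce \Cref{a5} directly from \Cref{a1} by maximizing the function
\[
f(\alpha) = \frac{2}{\pi}\int_0^{\frac{\alpha\pi}{2}} \log\cot\phi \wrt\phi, \qquad \alpha\in(0,1].
\]
Since $f'(\alpha) = \log\cot(\alpha\pi/2)$ is positive for $\alpha<1/2$ and negative for $\alpha>1/2$, $f$ is maximized at $\alpha = 1/2$. There its value is $\frac{2}{\pi}\int_0^{\pi/4}\log\cot\phi\wrt\phi$. Expanding $-\log\tan\phi$ (or using the standard identity $\int_0^{\pi/4}\log\cot\phi\wrt\phi = G$) gives $f(1/2) = 2G/\pi$. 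Substituting into \Cref{a1} bounds the leading term $N f(\alpha)$ by $2GN/\pi$ for every admissible pair $S,T$.

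The main obstacle is the error term $O(\log \alpha(1-\alpha))$, which is not uniformly $O(1)$. It blows up as $\alpha\to 0$ or $\alpha\to 1$, since $\alpha\ge 1/N$ means it can be as large as $O(\log N)$. I would handle this by a case split on $\alpha$:
\begin{itemize}
\item For $\alpha\in[1/4,3/4]$ the error term is $O(1)$, so the bound follows immediately.
\item For $\alpha$ outside this interval, $f$ is bounded away from its maximum: by concavity near $1/2$ and monotonicity, $f(\alpha)\le 2G/\pi - c$ for some absolute $c>0$. Then $N f(\alpha) + O(\log N) \le 2GN/\pi - cN + O(\log N)$, which is at most $2GN/\pi$ for $N$ large.
\end{itemize}
Small $N$ are absorbed into the $O(1)$. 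The same split also yields the contiguous case, with the extra term $-\tfrac12\log N$: in the second case the slack $cN$ dominates $\tfrac12\log N$ as well.

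For the equality statement, take $|S|=|T|=\lceil N/2\rceil$, so $\alpha = \lceil N/2\rceil/N = 1/2 + O(1/N)$. \Cref{a1} gives equality $\log\kappa = N f(\alpha) - \tfrac12\log N \pm O(1)$. It then remains to check that $N f(\alpha) = N f(1/2) + O(1)$. This holds because $f'(1/2)=0$ and $f'$ is smooth near $1/2$, so $f(\alpha)-f(1/2) = O((\alpha-1/2)^2) = O(1/N^2)$; even the cruder bound $|f'|\le O(1)$ suffices, giving an $O(1/N)$ difference and hence $O(1)$ after multiplying by $N$. Combining these gives $\log\kappa(F_{S,T}) = 2GN/\pi - \tfrac12\log N \pm O(1)$, as claimed.
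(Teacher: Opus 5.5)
Your proposal is correct and takes essentially the same route as the paper. You invoke \Cref{a1}, note that the rate is maximized at $\alpha=1/2$ with value $2G/\pi$, and for $\alpha$ outside a fixed window around $1/2$ (the paper uses $[1/3,2/3]$, you use $[1/4,3/4]$) let the linear-in-$N$ slack absorb the logarithmic error term; your explicit check that $\alpha=\lceil N/2\rceil/N$ changes $N f(\alpha)$ by only $O(1)$ in the equality case is a detail the paper leaves implicit.
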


Stated differently, the condition number of submatrices of the $N\times N$ discrete Fourier matrix with cyclically contiguous columns is asymptotically upper bounded by $(1.792)^N$, and there exists a submatrix with condition number asymptotically exceeding $(1.791)^N$, improves upon the lower bound of $(1.48)^N$ due to Barnett. We compare our estimates for square submatrices to those of Barnett \cite{b31} and numerically observed condition numbers for $N = 512$ in \Cref{a0}. Our estimates complement those of Barnett quite well, as \Cref{a1} is provably accurate for square matrices and Barnett's bound seems accurate for highly rectangular matrices. In addition, we stress that the bounds presented above are not purely asymptotic, but apply to practical values of $N$. The use of big Oh notation is purely for the sake of simplicity. Tracking the arguments contained in this paper, which do not aim to minimize the constant error term, produces a small constant. A more complicated analysis, using a refinement of the ideas presented here, would likely produce the correct constant for \Cref{a1}. Based on the experimental results in \Cref{a0}, this constant appears to be approximately $\approx 1.04$. We leave the identification and proof of this constant to the interested reader.

\begin{figure}[h!]
\centering
  \begin{subfigure}[t]{0.44\textwidth}
    \centering
    \includegraphics[width =\textwidth]{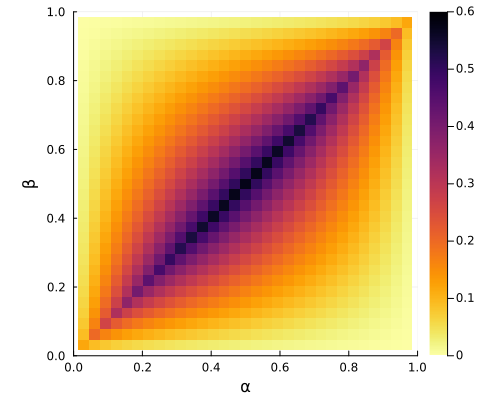}
    \caption{computed rates for $N = 512$}
    \label{a6}
  \end{subfigure}\hfill
  \begin{subfigure}[t]{0.53\textwidth}
    \centering
\includegraphics[width =\textwidth]{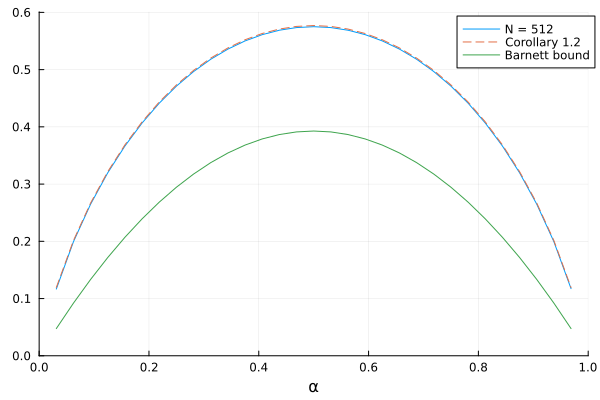}
    \caption{comparison for $\alpha = \beta$}
    \label{a7}
  \end{subfigure} \\[2 ex]
  \begin{subfigure}[t]{0.44\textwidth}
    \centering
    \includegraphics[width =\textwidth]{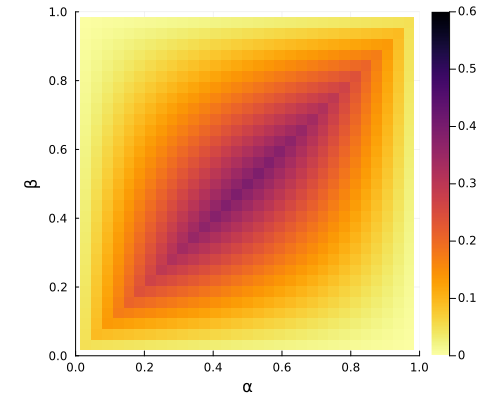}
    \caption{Barnett's lower bound}
    \label{a8}
  \end{subfigure}\hfill
  \begin{subfigure}[t]{0.53\textwidth}
    \centering
\includegraphics[width =\textwidth]{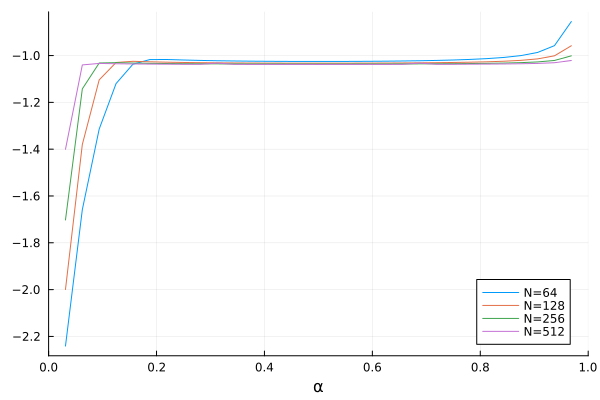}
    \caption{error term for $\log \kappa$ from \Cref{a1}}
    \label{a9}
  \end{subfigure}
 \caption{A comparison of our and Barnett's theoretical results to computed condition numbers for $N = 512$. Figure (A) contains the computed value of $\tfrac{1}{N} \log \kappa (F_{S,T})$ for contiguous $S$, $T$ with $\alpha = \tfrac{|S|}{N}$ and $\beta = \tfrac{|T|}{N}$. Barnett's lower bound $\tfrac{\pi}{2}(\min(\alpha,\beta) - \alpha \beta )$ is plotted in Figure (C), and matches Figure (A) quite well away from the diagonal $\alpha = \beta$. Figure (B) compares, for $\alpha=\beta$, $\tfrac{1}{N} \log \kappa (F_{S,T})$, Barnett's bound $\tfrac{\pi}{2} \alpha(1-\alpha)$, and the estimate $\frac{2}{\pi} \int_{0}^{\frac{\alpha \pi}{2}} \log \cot \phi\wrt\phi - \frac{1}{2 N}\log (N)$ from \Cref{a1}. Note that the estimate of \Cref{a1} is indistinguishable from the computed condition numbers. Finally, in Figure (D), we compute the unnormalized error term \\ \phantom{.}\hspace{2.6 cm} $\displaystyle{\log \kappa (F_{S,T}) -\left(\frac{2N}{\pi} \int_{0}^{\frac{\alpha \pi}{2}} \log \cot \phi\wrt\phi - \frac{1}{2}\log (N)\right)}$\\
 for $N = 64,\, 128, \, 256, \,512$. This suggests that the $O(1)$ error term in \Cref{a1} is very mild, and approximately $\approx 1.04$ for any fixed $\alpha$, as $N \rightarrow \infty$. \\[1 ex]
 All condition numbers were computed in the Julia programming language using the GenericLinearAlgebra package and the built-in BigFloat data type, with $512$ bits of precision prescribed.}
\label{a0}
\end{figure}

\begin{remark}\label{a10}
Our estimates are tight for square submatrices, but only provide loose upper bounds for non-square submatrices. However, an adaptation of the techniques presented herein can provide tighter estimates for non-square submatrices. In particular, by the Cauchy-Binet formula and singular value interlacing, for $A \in \mathbb{C}^{m \times n}$, $m > n$, we have
\begin{equation}\label{a11} \max_{|S| = n} \sigma_n\left(A_{S,[n]}\right) \le \sigma_{n}(A) \le \sqrt{\sum_{|S|=n} \sigma_n\left(A_{S,[n]}\right)^2}.
\end{equation}
Experimentally, for nearly square Fourier submatrices, the gap between the infinity and two-norm estimates in \Cref{a11} seems reasonably small. This provides a framework for bounding the nearly square case, the regime where Barnett's estimates suffer somewhat, by only using estimates for square submatrices. We leave open this potential avenue of future research.
\end{remark}

The weakness of \Cref{a3} is a high sensitivity to the minimum gap between the nodes. Consider, for example, starting with
$z_j=e^{ij\eps}$ for $j=0,\ldots,n$, the instance where \cref{a4} is tight, and replacing $z_n$ with $e^{i\eps(n-1)+i\eps^2}$. The minimum gap is now $\eps^2$, and yet \cref{a4} is still an equality, up to $O(\log n)/n$ error. It turns out that all we need for \cref{a4} to hold as an equality is for the set of nodes $\set{z_j}_{j=0}^n$ to be a ``good approximation'' to the continuous uniform measure on the arc of the unit circle from $1$ to $e^{i\eps n}$. We state this result, \Cref{a2}, in larger generality, where the continuous uniform measure on an arc of the circle is replaced by any measure satisfying a few regularity assumptions (which are satisfied, for example, when the measure is uniform on some compact set). We measure the quality of approximation a discrete set of points provides for some continuous measure via a two dimensional version of the Kolmogorov-Smirnov distance. Let $\ball(z,r)= \{w \in \mathbb{C} :\abs{w-z}\le r\}$. %
For two measures $\mu$ and $\nu$, the distance is
\[\ks(\mu,\nu)=\sup_{r>0}\sup_{z\in\C}\abs{
\mu\pare{\ball(z,r)}-\nu\pare{\ball(z,r)}
}\]
and in this context we identify a finite set of points $S \subset \mathbb{C}$ with the uniform probability measure $\mu_{S}$ on that set. 

A second way \Cref{a2} below generalizes \Cref{a3} is by extending the analysis to Vandermonde-like matrices, defined as follows. Given the polynomials $p_0,\ldots,p_n$, define
\[\pare{V^{(p_0,\ldots p_n)}(z_0,\ldots,z_n)}_{jk}=p_k(z_j).\]
The standard Vandermonde matrix is the special case of this where $p_k(z)=z^k$.
\Cref{a2} involves a couple characterizations of the polynomials, one involving the inner product $\inr fg_\nu=\int f\bar g\wrt\nu$, and another involving the sup-norm $\magn f_X=\sup_{x\in X}\abs{f(x)}$.
For the majority of reasonable choices of polynomials and nodes, \Cref{a2} shows that $\kappa(V)$ is an exponential quantity in $n$. The rate is stated in terms of a \textit{logarithmic potential}. For a measure $\mu$, its logarithmic potential is the real-valued function on $\C$
\[U^\mu(z)=\int\log\frac1{\abs{z-t}}\wrt\mu(t).\]
We present the following general theorem for Vandermonde-like matrices with arbitrary nodes in the complex plane.

\begin{theorem}\label{a2}
Given a set of nodes $S\subset\C$, $\abs S=n$, with minimum distance $\eps=\min\limits_{a,b\in S,a\neq b}\abs{a-b}$, suppose $\mu$ is a probability measure containing $S$ in its support and satisfying the following regularity assumptions for constants $\rho_1$, $\rho_2$, $C$, $\alpha$, $\beta$, and $R$:
\begin{enumerate}
    \item $\supp(\mu)$ is contained in a disk of radius $R$.
    \item $\sup_{r\in(0,\eps)}\sup_{\zeta\in\C}\frac{\mu\pare{\ball(\zeta,r)}}{r^{\beta}}\le\rho_1$
    \item $\frac{\mu(\ball(z_+,r))}{r^{\beta}}\ge\rho_2$
for $z_+=\argmax_{\zeta\in\C}U^\mu(\zeta)$ and $r=\dist(z_+,S)$.
    \item   \textit{}\(\abs{U^\mu(x)-U^\mu(y)}\le C\abs{x-y}^\alpha.\)
\end{enumerate}
Let $p_0,\ldots,p_n$ form a basis of degree $\le n$ polynomials, with Gram matrix $G=\bmat{\inr{p_j}{p_k}_\nu}_{0\le j,k\le n}$ for a probability measure $\nu$ and max norm $\gamma=\max_{0\le j\le d}\magn{p_j(z)}_{\supp(\nu)}$. Let $s=\min(\eps,\ks(\mu_S,\mu))$. Then
\spliteq{}{\frac{\log\kappa\left(V^{(p_0,\ldots,p_n)}(z_0,\ldots,z_n)\right)}n=&\sup_{z\in\supp(\mu)} U^\mu(z)-\inf_{z\in\supp(\nu)} U^\mu(z)\\+&O\pare{
\frac{\log\pare{n\gamma\magn{G^{-1}}^{1/2}}}n
+\log\frac1s\ks(\mu_S,\mu)+\ks(\mu_S,\mu)^{\alpha/\beta}}}
where $O(\cdot)$ suppresses polynomial dependencies on $\rho_1$, $\rho_2$, $C$, $\alpha$, $\beta$, and $R$.
\end{theorem}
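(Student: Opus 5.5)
We factor the Vandermonde-like matrix through the standard monomial/Lagrange structure. Write $V=V^{(p_0,\ldots,p_n)}(z_0,\ldots,z_n)$. Since $p_0,\ldots,p_n$ is a basis of polynomials of degree $\le n$, the inverse $V^{-1}$ maps a data vector $(f(z_j))_j$ to the coefficients of the interpolant of $f$ in the basis $\{p_k\}$. The interpolant is $\sum_j f(z_j)\ell_j$, where $\ell_j$ are the Lagrange polynomials of $S$.

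\textbf{Step 1: reduce $\kappa$ to Lagrange polynomials.} For the coefficient vector $c$ of a polynomial $q=\sum c_kp_k$, we have $\|q\|_{L^2(\nu)}^2=c^*Gc$. This gives $\|c\|\le \|G^{-1}\|^{1/2}\|q\|_{L^2(\nu)}\le\|G^{-1}\|^{1/2}\|q\|_{\supp(\nu)}$, and conversely $\|q\|_{\supp(\nu)}\ge \|q\|_{L^2(\nu)}\ge \|G\|^{-1/2}\|c\|$. Also $\|V\|\le (n+1)\gamma$ trivially. Combining these,
\[\log\kappa(V)=\log\max_j\|\ell_j\|_{\supp(\nu)}-\log\min\text{-type term}+O(\log(n\gamma\|G^{-1}\|^{1/2})).\]
More precisely, up to $\mathrm{poly}(n)$ factors, $\|V^{-1}\|$ is governed by $\max_j\|\ell_j\|_{\supp(\nu)}$. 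To get the two-sided statement, I would bound $\|V\|$ below by $\max_j$ of the row norms, which is at least $\max_k|p_k(z_j)|$. The delicate lower half of $\kappa$ comes from exhibiting a specific $j$ and a specific point $z\in\supp(\nu)$ at which $|\ell_j(z)|$ is large.

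\textbf{Step 2: potential-theoretic asymptotics of $\ell_j$.} Writing
\[\tfrac1n\log|\ell_j(z)|=\tfrac1n\sum_{k\ne j}\log|z-z_k|-\tfrac1n\sum_{k\ne j}\log|z_j-z_k|,\]
each sum is $-U^{\mu_S}$, evaluated at $z$ and at $z_j$ respectively, with one atom removed. The main task is to show $U^{\mu_S}=U^\mu+O(\log\frac1s\,\ks(\mu_S,\mu)+\ks^{\alpha/\beta})$ at the relevant points. To do this, split $\log\frac1{|z-t|}$ into a near part ($|z-t|<r$) and a far part. For the far part, integrate by parts in the radial variable using distribution functions $\mu(\ball(z,\rho))$, which differ by at most $\ks$. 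Over $\rho\in[r,2R]$ this gives an error of order $\ks\log\frac{R}{r}$. For the near part, use assumption (2) to bound the $\mu$-contribution by $\rho_1 r^\beta\log\frac1r$. Bound the $\mu_S$-contribution using the minimum separation $\eps$ (at most $O(\ks+\rho_1r^\beta)$ mass, each point with $\log|z-t|\ge\log\eps$). Choosing $r\approx \ks^{1/\beta}$ balances these terms. The H\"older condition (4) then handles evaluating at nearby points, which gives the $\ks^{\alpha/\beta}$ term.

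\textbf{Step 3: upper and lower bounds.} For the upper bound, the numerator is at most $n(-\inf_{\supp\nu}U^\mu)$ plus error. The denominator $\prod_{k\ne j}|z_j-z_k|$ is at least $\exp(-n\sup_{\supp\mu}U^\mu)$ plus error, using $z_j\in\supp\mu$. For the lower bound, pick $z$ attaining $\inf_{\supp\nu}U^\mu$, and take $z_j$ to be the node closest to $z_+=\argmax U^\mu$. Assumption (3) guarantees that $\dist(z_+,S)$ is small, of order $\ks^{1/\beta}$. Continuity (4) then transfers $U^\mu(z_+)$ to $U^\mu(z_j)$ at cost $\ks^{\alpha/\beta}$.

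\textbf{Main obstacle.} I expect the hardest part to be the near-field control in Step 2 when the evaluation point is itself a node, or lies within $\eps$ of one. There the singular logarithm must be handled using only the Kolmogorov--Smirnov discrepancy and the separation $s$, which is where the $\log\frac1s$ factor enters. I would first try a dyadic-annulus decomposition around the point, bounding the discrepancy on each annulus by $2\ks$.
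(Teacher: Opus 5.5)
Your overall architecture matches the paper's: Step 1 is \Cref{a19}, Step 2 is \Cref{a50}, and Step 3 is \Cref{a51}, including the use of (3) and (4) to pass from $z_+$ to its nearest node. But the lower bound in Step 3 has a genuine gap. You evaluate $\ell_j$ at $z_-=\argmin_{\supp(\nu)}U^\mu$ itself. That needs an \emph{upper} bound $U^{\mu_j}(z_-)\le U^\mu(z_-)+\text{error}$ for the discrete potential of $\mu_j=\frac1n\sum_{k\ne j}\delta_{z_k}$. The nearest node contributes $\frac1n\log\frac1{\dist(z_-,S)}$, which is unbounded, and if $z_-\in S\setminus\{z_j\}$ then $\ell_j(z_-)=0$ outright. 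This already happens in the basic example $\nu=\mu$ uniform on $[-1,1]$ with equispaced nodes including $\pm1$: there $z_-=\pm1$ is a node, and the true maximum of $|\ell_j|$ lies between nodes. Your Step 2 claim that near-field atoms satisfy $\log|z-t|\ge\log\eps$ holds only when $z$ is itself a node with its own atom removed. The dyadic-annulus fix you propose yields at best $\ks(\mu_S,\mu)\log\frac{R}{\dist(z_-,S)}$. That is acceptable only if $\dist(z_-,S)\ge\mathrm{poly}(s)$, and nothing guarantees this at the fixed point $z_-$.

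The missing idea is to move the evaluation point off the nodes and pay for the move with the H\"older condition. The paper sets $\eps'=\min(\eps,(\beta\ks(\mu_S,\mu)/2\rho_1)^{1/\beta})$ and $r=(\eps')^2/(2R)$. It then picks, by an area count, a point $a$ with $|a-z_-|\le(n+1)^{1/2}r<\eps'$ but $\dist(a,S)\ge r$. Note that $a$ must lie in $\supp(\nu)$ for $|\ell_j(a)|\le\|\ell_j\|_{\supp(\nu)}$ to be usable, so the count should be done within $\supp(\nu)$, e.g.\ by arc length when it is a curve. With this choice, the potential comparison at $a$ costs only $O(\log(R/\eps')\,\ks(\mu_S,\mu))$. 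Condition (4) then gives $U^\mu(a)\le U^\mu(z_-)+C(\eps')^\alpha=U^\mu(z_-)+O(\ks(\mu_S,\mu)^{\alpha/\beta})$, which is the second source of that error term.

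For the upper bound no such care is needed. There you only need $U^{\mu_j}\ge U^\mu-\text{error}$ on $\supp(\nu)$, and nearby atoms enter with the favorable sign, so Step 2 should be stated one-sidedly there.

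Two smaller points in Step 1:
\begin{enumerate}
\item The inequality $\|q\|_{L^2(\nu)}\ge\|G\|^{-1/2}\|c\|$ is false in general. The correct constant is $\|G^{-1}\|^{-1/2}$, which merely repeats your first inequality. The lower bound on $\|V^{-1}\|$ actually comes from $\|\ell_j\|_{\supp(\nu)}\le\sqrt{n+1}\,\gamma\,\|c\|_2$, which is where $\gamma$ enters.
\item The bound $\|V\|\le(n+1)\gamma$ is immediate only when the nodes lie in $\supp(\nu)$.
\end{enumerate}
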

\begin{remark}
Intuitively, this theorem says that the condition number of the Vandermonde-like matrix is an exponential quantity in $n$, with a rate given by the range of the logarithmic potential of $\mu$ between the supports of $\mu$ and $\nu$, when $\nu$ is a measure for which the polynomials $p_0,\ldots,p_n$ are orthogonal (or, at least, for which their Gram matrix is well-conditioned).
When $p_j(z)=z^j$, $\mu$ is the uniform distribution on an arc of the circle, and $\nu$ is the uniform distribution on the whole circle $\mathbb{S}^1$, \Cref{a2} recovers the rate stated in terms of the anti-derivative of $\log\cot(\cdot)$ from \Cref{a3}. However, the generality of \Cref{a2} comes at a cost, as it does not recover the correct polynomial factor from \Cref{a3}.
\end{remark}

The structure of the remainder of the paper is as follows. In \Cref{a12} we prove a number of results connecting the condition number of a Vandermonde matrix to Lagrange interpolating polynomials and show that equispaced nodes are, in some sense, extremal. In \Cref{a13} we prove tight estimates for Lagrange polynomials with equispaced nodes by viewing the logarithm of the Lagrange polynomial as a Riemann sum for the logarithmic potential of the uniform measure. Using these results, we prove \Cref{a3} and \Cref{a1,a5} in \Cref{a14}. We prove \Cref{a2} in \Cref{a15}.

\section{Vandermonde condition numbers and Lagrange interpolation}\label{a12}

A key connection exploited in this work is that the conditioning of Vandermonde and Vandermonde-like matrices can be converted to statements about the norms of Lagrange interpolating polynomials.
Recall the definition of these polynomials: given 
a set of nodes $z_0,\ldots,z_n \in \C$, there is a unique collection of polynomials,
$$L_k\pare{z;\set{z_j}_{j=0}^n}:=\prod_{j\neq k}\frac{z-z_j}{z_k-z_j},$$ abbreviated $L_k(z)$, of degree at most $n$ such that
\[L_k(z_j)=\begin{cases}1&j=k\\0&j\neq k\end{cases}.\]

First we note that estimation of the norm of a Vandermonde matrix is straightforward, allowing us to focus our attention on estimating the norm of the inverse.

\begin{lemma}\label{a16}
Let $z_0,\ldots,z_n\in\SS^1$. Then $\sqrt {n+1}\le\magn{V(z_0,\ldots,z_n)}\le n+1$. If $z_j=e^{2\pi ik_j/N}$ for distinct indices $k_j<N$, then $\sqrt{n+1}\le\magn{V(z_0,\ldots,z_n)}\le\sqrt N$ as well.
\end{lemma}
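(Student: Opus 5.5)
The plan is to bound the operator norm between the Frobenius norm and the norm of a single column, and then to use orthogonality of characters for the Fourier case. Write $V = V(z_0,\ldots,z_n)$, with entries $V_{jk} = z_k^j$ for $0\le j,k\le n$. Every entry has modulus one, since $|z_k|=1$.

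For the lower bound, take the first standard basis vector $e_0$. Then $Ve_0$ is the column $(z_0^j)_{j=0}^n$, whose Euclidean norm is $\sqrt{n+1}$. Hence $\magn{V}\ge \magn{Ve_0}/\magn{e_0}=\sqrt{n+1}$. This argument applies in both settings.

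For the general upper bound, use $\magn{V}\le\magn{V}_F$. The Frobenius norm is the square root of the sum of $(n+1)^2$ entries of modulus one, so $\magn{V}_F=n+1$, which gives the claim.

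For the Fourier case, suppose $z_j=e^{2\pi i k_j/N}$ with distinct $k_j\in\{0,\ldots,N-1\}$. The plan is to exhibit $V$ as a submatrix of a scaled unitary matrix.
\begin{itemize}
\item[(1)] First, $n+1\le N$, because the $k_j$ are distinct residues. So the row indices $j=0,\ldots,n$ are among $0,\ldots,N-1$.
\item[(2)] Up to indexing, the entries $z_k^j = e^{2\pi i jk_k/N}$ are exactly the entries of $F$ in rows $j+1$ ($0\le j\le n$) and columns $k_k+1$. Thus $V$ is, up to a permutation of columns, the submatrix $F_{S,T}$ with $S=\{1,\ldots,n+1\}$ and $T=\{k_j+1\}$.
\item[(3)] Since $F/\sqrt N$ is unitary, $\magn{F}=\sqrt N$.
\item[(4)] The operator norm of a submatrix is at most that of the full matrix, because $F_{S,T}=P_S F P_T^*$ with coordinate projections of norm at most one, and permuting columns does not change the norm. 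Therefore $\magn{V}\le\sqrt N$.
\end{itemize}

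There is no real obstacle here. The only care needed is in step (2): checking that the row index range $0,\ldots,n$ fits inside $[N]$, and matching the $0$-based indexing of $V$ to the $1$-based indexing of $F$.
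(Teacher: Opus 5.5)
Your proposal is correct and follows the paper's own proof: the lower bound comes from the norm $\sqrt{n+1}$ of the first column, and the general upper bound from the Frobenius norm $n+1$. In the Fourier case, $V$ is a (column-permuted) submatrix of $F$, whose norm is $\sqrt N$; your remark that the distinct $k_j$ force $n+1\le N$, so the row indices fit inside $[N]$, fills in a detail the paper leaves implicit.
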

\begin{proof}
    The length of the first column of $V$ is $\sqrt{n+1}$ and the Frobenius norm of $V$ equals $n+1$. When $z_j=e^{2\pi i k_j/N}$, then $V$ is a submatrix of the $N \times N$ Fourier matrix which has norm $\sqrt N$.
\end{proof}

We supply two lemmas relating the norm of the inverse of a Vandermonde matrix to Lagrange interpolating polynomials on the same nodes. The first is sharper and is needed to recover the constant $C$ in \Cref{a3}. The second is looser, but can be more easily applied in the general setting of Vandermonde-like matrices $V^{(p_0,\ldots,p_n)}(x_0,\ldots,x_n)$ and is needed to obtain \Cref{a2}.

\begin{lemma}\label{a17}
Let $z_0,\ldots,z_n \in \mathbb{S}^1$ be distinct. Then
\[
\frac1{2\pi}\max_{k\in\set{0,\ldots,n}}\int_0^{2\pi}\abs{L_k\pare{e^{i\theta};\{z_j\}_{j =0}^n}}^2\wrt\theta
\le
\magn{V(z_0,\ldots,z_n)^{-1}}_2^2
\le\frac1{2\pi}\sum_{k=0}^n\int_0^{2\pi}\abs{L_k\pare{e^{i\theta};\{z_j\}_{j =0}^n}}^2\wrt\theta.\]
\end{lemma}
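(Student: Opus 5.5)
The plan is to identify the columns of $V^{-1}$ with coefficient vectors of the Lagrange polynomials, and then apply Parseval's identity on the circle together with standard norm comparisons.

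\textbf{Step 1 (coefficient vectors).} With $V=(z_k^j)_{j,k=0}^n$, row $j$ indexes the power $z^j$ and column $k$ indexes the node $z_k$. Write $L_k(z)=\sum_{j=0}^n c_{jk} z^j$ and set $C=(c_{jk})$. The interpolation property $L_k(z_m)=\delta_{km}$ reads $\sum_j c_{jk} z_m^j=\delta_{km}$, that is, $(V^T C)_{mk}=\delta_{mk}$. Hence $C=(V^T)^{-1}=(V^{-1})^T$. Transposition preserves both the spectral and Frobenius norms. Therefore $\magn{V^{-1}}_2=\magn{C}_2$, and $\magn{V^{-1}}_F=\magn{C}_F$.

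\textbf{Step 2 (Parseval).} The monomials $e^{ij\theta}$ are orthonormal in $L^2(\wrt\theta/2\pi)$. Hence, for each $k$,
\[
\frac1{2\pi}\int_0^{2\pi}\abs{L_k(e^{i\theta})}^2\wrt\theta=\sum_{j=0}^n\abs{c_{jk}}^2 .
\]
This is the squared Euclidean norm of the $k$-th column of $C$.

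\textbf{Step 3 (norm comparisons).} For the lower bound, the spectral norm dominates the norm of any column: $\magn{C}_2\ge\magn{Ce_k}_2$ for every $k$. Taking the maximum over $k$ gives the left inequality. For the upper bound, $\magn{C}_2^2\le\magn{C}_F^2=\sum_k\magn{Ce_k}_2^2$. By Step 2 this equals the sum of the integrals.

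\textbf{Main obstacle.} There is essentially none beyond getting the index and transpose conventions right in Step 1. Distinctness of the nodes ensures that $V$ is invertible and that the $L_k$ are well defined.
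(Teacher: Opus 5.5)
Your proposal is correct and follows essentially the same route as the paper: you identify the coefficient vectors of the $L_k$ with columns of (the transpose of) $V^{-1}$, use orthonormality of the monomials on the circle, and compare the spectral norm with the largest column norm and with the Frobenius norm. Your explicit transpose is slightly more careful than the paper's claim that $V^{-1}\bm{e}_k$ lists the coefficients of $L_k$, since that claim matches the convention $V_{jk}=z_j^k$ rather than the stated $(z_k^j)_{j,k}$; as you note, the discrepancy is harmless because transposition preserves both norms.
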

\begin{proof}
Let $V:=V(z_0,\ldots,z_n)$. Observe that
$V^{-1}\bm{e}_k$ is a list of the coefficients of $L_k(z)$, so by orthogonality of the monomials on the unit circle, we have
\[
\frac1{2\pi}\int_0^{2\pi}\abs{L_k(e^{i\theta})}^2\wrt\theta
=\magn{V^{-1} \bm{e}_k}_2^2.\]
The result follows by comparing the $\ell_2$ operator norm to the Frobenius norm,
\spliteq{\label{a18}}{
\max_{k\in\set{0,\ldots,n}} \magn{V^{-1} \bm{e}_k}_2^2
\le\magn{V^{-1}}^2_2
\le\magn{V^{-1}}_F^2
=\sum_{k=0}^n\magn{V^{-1} \bm{e}_k}_2^2.
}
\end{proof}
\begin{lemma}\label{a19}
    Let $p_0,\ldots,p_n$ be a basis of degree $\le n$ polynomials, with Gram matrix $G=[\inr{p_j}{p_k}_\nu]_{0\le j,k\le m}$ for the inner product $\inr fg_\nu=\int f\bar g\wrt\nu$.
    Put $V=V^{(p_0,\ldots,p_n)}(z_0,\ldots,z_n)$ and $L_k(z)=L_k(z,\set{z_j}_{j=0}^n)$.
    Then
    \[\dfrac{\max_{k\in\set{0,\ldots,n}}\magn{L_k}_{\supp(\nu)}}{\sqrt{n+1}\max_{0\le j\le n}\magn{p_j}_{\supp(\nu)}}
\le\magn{V^{-1}}\le
\sqrt {n+1}\magn{G^{-1}}^{1/2}\max_{k\in\set{0,\ldots,n}}\magn{L_k}_{\supp(\nu)}.\]
\end{lemma}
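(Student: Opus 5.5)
The argument follows the template of \Cref{a17}, with one change: the column $V^{-1}\bm e_k$ is now the coefficient vector of $L_k$ in the basis $p_0,\ldots,p_n$ rather than in the monomial basis. Write $c^{(k)}=V^{-1}\bm e_k$, so that
\[
\sum_{j} c^{(k)}_j p_j(z_m)=\delta_{mk}.
\]
The polynomial $\sum_j c^{(k)}_j p_j$ has degree at most $n$ and agrees with $L_k$ at the $n+1$ nodes, so it equals $L_k$. Then
\[
\magn{L_k}_\nu^2=\inr{L_k}{L_k}_\nu=(c^{(k)})^* \bar G\, c^{(k)},
\]
up to the conjugation convention, which does not affect eigenvalues. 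Here $\bar G$ is Hermitian positive definite, since the $p_j$ form a basis and we assume $\nu$ is such that $\inr\cdot\cdot_\nu$ is an inner product on polynomials of degree $\le n$.

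For the upper bound, I use $(c^{(k)})^*\bar G c^{(k)}\ge \lambda_{\min}(G)\magn{c^{(k)}}_2^2$, which gives
\[
\magn{c^{(k)}}_2^2\le\magn{G^{-1}}\,\magn{L_k}_{L^2(\nu)}^2\le\magn{G^{-1}}\,\magn{L_k}_{\supp(\nu)}^2,
\]
where the last step holds because $\nu$ is a probability measure. Then, as in \cref{a18},
\[
\magn{V^{-1}}^2\le\magn{V^{-1}}_F^2=\sum_k\magn{c^{(k)}}_2^2\le(n+1)\magn{G^{-1}}\max_k\magn{L_k}_{\supp(\nu)}^2.
\]
Taking square roots gives the right-hand inequality.

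For the lower bound, fix $k$ and $z\in\supp(\nu)$. By Cauchy--Schwarz,
\[
\abs{L_k(z)}=\Bigl|\sum_j c^{(k)}_j p_j(z)\Bigr|\le\magn{c^{(k)}}_2\Bigl(\sum_j\abs{p_j(z)}^2\Bigr)^{1/2}\le\magn{c^{(k)}}_2\sqrt{n+1}\max_j\magn{p_j}_{\supp(\nu)}.
\]
Also $\magn{c^{(k)}}_2=\magn{V^{-1}\bm e_k}_2\le\magn{V^{-1}}$. Taking the supremum over $z$ and the maximum over $k$ and rearranging gives the left-hand inequality.

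I do not expect a real obstacle. The only points to handle carefully are the identification of $V^{-1}\bm e_k$ with the coefficients of $L_k$, which depends on the row/column convention $(V)_{jk}=p_k(z_j)$, and the conjugation or transposition in $G$ versus $\bar G$, which leaves $\magn{G^{-1}}$ unchanged.
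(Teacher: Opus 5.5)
Your proof is correct and follows essentially the same route as the paper. Both identify $V^{-1}\bm e_k$ with the coefficient vector of $L_k$ in the basis $p_0,\ldots,p_n$, bound its $\ell_2$ norm above through the Gram matrix and below through the sup norm, and pass between columns and $\magn{V^{-1}}$ via the Frobenius norm. Your Cauchy--Schwarz step is equivalent to the paper's triangle inequality combined with $\magn{c^{(k)}}_1\le\sqrt{n+1}\,\magn{c^{(k)}}_2$. Your treatment of $\bar G$ versus $G$ is more careful than the paper's, which writes $a^*Ga$. You also state explicitly that $\nu$ is a probability measure, which the paper's step $\int\abs{L_k}^2\wrt\nu\le\magn{L_k}_{\supp(\nu)}^2$ uses without saying so.
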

\begin{proof}
For $\a=\pmat{a_1&\cdots&a_n}$,
say $L_k(z)=\sum_{j=0}^na_jp_j(z)$ so that $V\a=\bm e_k$.
By the triangle inequality,
\[
\magn{L_k}_{\supp(\nu)}^2
\le
\magn{\a}_1^2
\max_{0\le j\le n}\magn{p_j}^2_{\supp(\nu)}.\]
On the other hand, \[
\magn{L_k}_{\supp(\nu)}^2
\ge\int\abs{L_k}^2\wrt\nu=a^*Ga.\]
Now $\magn{\a}_1^2\le (n+1)\magn{\a}_2^2$ and $a^*Ga\ge \magn{\a}_2^2/\rmagn{G^{-1}}$
so
\[
\frac{\magn{L_k}_{\supp(\nu)}^2}{(n+1)\max_{0\le j\le n}\magn{p_j}^2_{\supp(\nu)}}
\le\magn{\a}_2^2\le \rmagn{G^{-1}}
\magn{L_k}_{\supp(\nu)}^2
\]
Similar to \cref{a18},
\[
\max_{k\in\set{0,\ldots,n}}\magn{V^{-1}\bm e_k}^2
\le\magn{V^{-1}}^2\le
(n+1)\max_{k\in\set{0,\ldots,n}}\magn{V^{-1}\bm e_k}^2\]
gives the final result.
\end{proof}

\begin{remark}When $\supp(\nu)$ is a real interval containing the nodes, then the upper and lower bounds in \Cref{a17,a19} can be re-expressed in terms of Lebesgue constant, $\sup_{x\in\supp(\nu)}\sum_{j=0}^n\abs{L_k(x)}$.
\end{remark}

When only an upper bound is desired, as in the case of \Cref{a3}, we may reduce our analysis to the maximum value on the unit circle of a Lagrange polynomial with equally spaced points, with spacing given by the minimum gap.

\begin{lemma}\label{a20}
Let $z_0,\ldots,z_n \in \sph1$ and $\min_{j\ne k} \sdist(z_j,z_k) \ge \eps$. Then
\[ \frac{1}{n+1}   \max_{z\in\sph1} \, \abs{L_k\left(z;\{z_j\}_{j=0}^n\right)}^2 \le \frac1{2\pi}\int_0^{2\pi}\abs{L_k\left(e^{i\theta};\{z_j\}_{j =0}^n\right)}^2\wrt\theta \le \max_{z\in\sph1} \, \abs{L_k\left(z;\{z_j\}_{j=0}^n\right)}^2 \]
for all $k \in \{0,\ldots,n\}$ and
 \[\max_{k \in \{0,\ldots,n\}} \max_{z\in\sph1} \, \abs{L_k\big(z;\{z_j\}_{j=0}^n\big)} \le \max_{k \in \{0,\ldots,n\}} \max_{z\in\sph1} \,\abs{L_k\big(z;\{e^{ij \eps}\}_{j=0}^n\big)}.\]
\end{lemma}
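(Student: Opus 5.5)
The first chain of inequalities is a norm comparison for a single polynomial of degree at most $n$; the second is a monotonicity argument in which the nodes are slid toward $z_k$.

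\textbf{First chain.} The upper bound is immediate, since an average over the circle is at most the maximum. For the lower bound, write $L_k(z)=\sum_{j=0}^n a_j z^j$. Orthogonality of the monomials on $\sph1$ gives $\frac1{2\pi}\int_0^{2\pi}\abs{L_k(e^{i\theta})}^2\wrt\theta=\sum_j\abs{a_j}^2$. By Cauchy--Schwarz, for every $z\in\sph1$,
\[
\abs{L_k(z)}^2\le (n+1)\sum_j\abs{a_j}^2 .
\]
Taking the maximum over $z$ gives the first inequality.

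\textbf{Reduction for the second claim.} Fix $k$ and a point $z\in\sph1$. By rotation invariance of $\abs{L_k}$ (rotating all nodes and $z$ together), assume $z_k=1$ and $z=e^{i\theta}$ with $\theta\in(0,2\pi)$. Write $z_j=e^{i\phi_j}$. Using $\abs{e^{ia}-e^{ib}}=2\abs{\sin\frac{a-b}2}$,
\[
\abs{L_k(z)}=\prod_{j\ne k}\frac{\abs{\sin\frac{\theta-\phi_j}2}}{\abs{\sin\frac{\phi_j}2}},\qquad \phi_j\in(0,2\pi).
\]
We may assume no $\phi_j$ equals $\theta$, since otherwise the value is $0$.

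\textbf{Monotonicity.} Each factor, viewed as a function of $\phi=\phi_j$ with $\theta$ fixed, behaves as follows.
\begin{itemize}
\item On $(0,\theta)$, the logarithmic derivative of $\sin\frac{\theta-\phi}2/\sin\frac\phi2$ is $-\tfrac12\cot\frac{\theta-\phi}2-\tfrac12\cot\frac\phi2$. Both cotangent arguments lie in $(0,\pi)$ and sum to $\theta/2<\pi$, so this equals $-\tfrac12\sin\frac\theta2\big/\big(\sin\frac{\theta-\phi}2\sin\frac\phi2\big)<0$. Hence the factor increases as $\phi$ moves toward $0$.
\item On $(\theta,2\pi)$, the symmetric computation (substituting $\phi\mapsto 2\pi-\phi$) shows the factor increases as $\phi$ moves toward $2\pi$.
\end{itemize}
In both cases, moving a node toward $z_k$ along the arc that does not contain $z$ increases $\abs{L_k(z)}$.

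\textbf{Compression.} Let $a$ be the number of nodes with $\phi_j\in(0,\theta)$ and $b=n-a$ the number in $(\theta,2\pi)$. List the first group in increasing order; by the gap hypothesis the $m$-th satisfies $\phi\ge m\eps$. Move it to exactly $m\eps$. This stays inside $(0,\theta)$, preserves the order, and only moves the node toward $0$. Likewise the $m$-th node of the second group, counted from $2\pi$ downward, satisfies $\phi\le 2\pi-m\eps$, and we move it to $2\pi-m\eps$. By the monotonicity above, $\abs{L_k(z)}$ can only increase.

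The new node set is $\{e^{ij\eps}: -b\le j\le a\}$. This is a rotation by $e^{-ib\eps}$ of $\{e^{ij\eps}\}_{j=0}^n$, with $z_k$ becoming node index $b$. The $a+b+1=n+1$ positions are distinct because $n\eps\le 2\pi$, which follows from the gap condition. So
\[
\abs{L_k(z)}\le \abs{L_b\big(e^{-ib\eps}z;\{e^{ij\eps}\}_{j=0}^n\big)}.
\]
Taking the maximum over $z$ and $k$ gives the claim.

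\textbf{Main obstacle.} The delicate step is making the monotonicity argument rigorous. One must verify the sign of the derivative on each arc, which rests on the cotangent identity above. One must also check that the compressed positions never cross $\theta$, which holds because each compressed position lies weakly between $z_k$ and the node's original position. Finally, the endpoints and the degenerate case where $z$ coincides with a node need care, but those are handled trivially.
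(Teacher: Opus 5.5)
Your proposal is correct and follows the paper's proof. The first chain uses the same orthogonality plus Cauchy--Schwarz argument, and the second claim uses the same monotone sliding of each node toward $z_k$ along the arc avoiding the evaluation point, ending at spacing exactly $\eps$; the paper normalizes $z=1$ rather than $z_k=1$ and omits the derivative sign check you supply. The remaining issues are cosmetic. The rotated evaluation point should be $e^{ib\eps}z$, not $e^{-ib\eps}z$, which is harmless after maximizing over $z$. Distinctness needs $n\eps<2\pi$ strictly, which holds since $(n+1)\eps\le 2\pi$.
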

\begin{proof}
Consider the first pair of inequalities. The integral, divided by $2 \pi$, is the average value of $\abs{L_k\big(z;\{z_j\}_{j=0}^n\big)}^2$, which is clearly bounded above by its maximum value. Let $L_k(z) = \sum_{j = 0}^n a_j z^j$. For the lower bound, we note that by Cauchy-Schwarz, for $z\in\sph1$,
\[|L_k(z)|^2 = \left|\sum_{j = 0}^n a_j z^j \right|^2 \le \left( \sum_{j=0}^n |a_j|^2 \right) \left( \sum_{j=0}^n |z|^{2j}  \right) = \frac{ (n+1) }{2\pi}\int_0^{2\pi}\abs{L_k(e^{i\theta};\{z_j\}_{j =0}^n)}^2\wrt\theta .\]
Now consider the second inequality. Suppose, without loss of generality, that $\{z_0,\ldots,z_n\}$ equals $\{e^{i\theta_0},\ldots,e^{i \theta_n}\}$ for some $0 \le \theta_0<\ldots<\theta_n \le 2 \pi$ and that the maximum is achieved by index $k$ and evaluation point $z=1$. The quantity $L_k(1)$ is non-increasing with respect to $\theta_j$ for $j>k$, and non-decreasing with respect to $\theta_j$ for $j<k$. Therefore, we may take $\theta_j = \theta_k + (j- k) \eps$ for all $j \in \{0,\ldots,n\}$ without decreasing $L_k(1)$.
\end{proof}

In a similar fashion, the determinant of a Vandermonde matrix with unit nodes may also be lower bounded by equally spaced nodes, a fact which will prove useful when analyzing nearly uniform nodes (i.e., almost $(n+1)^{th}$ roots of unity).

\begin{lemma}\label{a21}
Let $z_0,\ldots,z_n \in \mathbb{S}^1$ and $\min_{j\ne k} \sdist(z_j,z_k) \ge \eps$. Then
\[ \left|\det \left(V(z_0,\ldots,z_n) \right)\right| \ge \left|\det \left(V(1,e^{i\eps},\ldots,e^{in \eps}) \right) \right| .\]
\end{lemma}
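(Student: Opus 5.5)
The determinant of a Vandermonde matrix is $\prod_{j<k}(z_k-z_j)$ up to sign, so
\[
\left|\det V(z_0,\ldots,z_n)\right| = \prod_{j<k} |z_k - z_j| .
\]
For nodes $z_j = e^{i\theta_j}$ on the unit circle, $|e^{i\theta_k}-e^{i\theta_j}| = 2\sin\big(\sdist(z_j,z_k)/2\big)$. The plan is to compare these chord lengths pair by pair (after a suitable matching) with the chord lengths of the equispaced configuration $1,e^{i\eps},\ldots,e^{in\eps}$. We order the nodes so that $0\le \theta_0<\theta_1<\cdots<\theta_n<2\pi$, and write $g_m=\theta_{m+1}-\theta_m$ for $0\le m<n$ and $g_n = 2\pi-(\theta_n-\theta_0)$ for the cyclic gaps. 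Each $g_m\ge\eps$, and $\sum_m g_m = 2\pi$; in particular $(n+1)\eps\le 2\pi$.

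The key step is the following claim. For $j<k$, the arc $\theta_k-\theta_j$ is a sum of $k-j$ consecutive gaps, and the complementary arc is a sum of the remaining $n+1-(k-j)$ gaps. Hence
\[
\theta_k-\theta_j\ge (k-j)\eps, \qquad 2\pi-(\theta_k-\theta_j)\ge (n+1-(k-j))\eps .
\]
Set $d=k-j$ and $t=\theta_k-\theta_j$, so that $t\in[d\eps,\,2\pi-(n+1-d)\eps]$. We need
\[
\sin(t/2)\ge \min\big(\sin(d\eps/2),\ \sin((2\pi-(n+1-d)\eps)/2)\big),
\]
which holds because $\sin(t/2)$ is concave on $[0,2\pi]$ and so attains its minimum over an interval at an endpoint. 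The right endpoint gives $\sin\big(\pi-(n+1-d)\eps/2\big)=\sin\big((n+1-d)\eps/2\big)$. So for each pair we get
\[
|z_k-z_j|\ge 2\min\big(\sin(d\eps/2),\sin((n+1-d)\eps/2)\big).
\]

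This minimum is not directly a chord of the equispaced configuration. We fix that by a counting argument, which is the main obstacle. In the equispaced configuration there are $n+1-d$ pairs at index distance $d$, each with chord $2\sin(d\eps/2)$. The multiset of index differences $k-j$ in any ordered configuration is the same: $n+1-d$ pairs with difference $d$. We then group the pairs with difference $d$ together with the pairs with difference $d' = n+1-d$.

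Comparing the products within each group, the arbitrary configuration gives at least
\[
\big[2\min(\sin(d\eps/2),\sin(d'\eps/2))\big]^{(n+1-d)+(n+1-d')}=m^{n+1}
\]
for the combined group, where $m$ is that minimum. The equispaced configuration gives $(2\sin(d\eps/2))^{n+1-d}(2\sin(d'\eps/2))^{d}$. This is the point where the naive bound fails: the minimum-based estimate can fall short of the equispaced product. We therefore need a sharper tool than the pair-by-pair minimum.

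The planned replacement is as follows. For a fixed $d$, summing over $j$ the cyclic index sums of $d$ consecutive gaps gives $\sum_{j=0}^n (g_j+\cdots+g_{j+d-1})=2\pi d$, with indices taken mod $n+1$. These $n+1$ cyclic arcs cover both the pairs with $k-j=d$ and, via their complements, the pairs with $k-j=d'$. Each cyclic arc $s$ lies in $[d\eps, 2\pi-d'\eps]$, and the sum of $\log\sin(s/2)$ over the arcs is a concave function of the gap vector. It is therefore minimized at an extreme point of the polytope $\{g_m\ge\eps,\ \sum_m g_m=2\pi\}$, namely one gap equal to $2\pi-n\eps$ and all others equal to $\eps$. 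That extreme point is exactly the equispaced configuration $1,e^{i\eps},\ldots,e^{in\eps}$, up to rotation. Since $\log|\det V|=\sum_{j<k}\log\big(2\sin((\theta_k-\theta_j)/2)\big)$ is a sum of concave functions of linear forms in $g$, the whole quantity is concave in $g$. Its minimum over the polytope is therefore attained at a vertex, and all vertices are rotations and reflections of the equispaced configuration, all with the same determinant. This global concavity argument is cleaner than the grouping and is what I would actually write down. The remaining checks are that each linear form $\theta_k-\theta_j$ stays in $(0,2\pi)$, where $\log\sin(\cdot/2)$ is concave, and that the vertices are exactly as described.
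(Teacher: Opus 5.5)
Your final argument is correct and is essentially the paper's proof. Like the paper, you write $\log|\det V|$ as a sum of the concave functions $\log(2\sin(\cdot/2))$ of linear forms in the gaps, which are valued in $[\eps,2\pi-\eps]$. You then minimize over the gap polytope, whose vertices are all rotations of $1,e^{i\eps},\ldots,e^{in\eps}$. The only difference is that you use all $n+1$ cyclic gaps with $\sum_m g_m=2\pi$, where the paper uses the $n$ non-cyclic gaps $\phi_j\ge\eps$ with the equivalent constraint $\sum_j\phi_j\le 2\pi-\eps$; you can drop the pairwise-minimum and grouping detours from a write-up.
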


\begin{proof}
Let $\{z_0,\ldots,z_n\}$ equal $\{e^{i\theta_0},\ldots,e^{i \theta_n}\}$ for some $0 \le \theta_0<\ldots<\theta_n \le 2 \pi$, and define $\phi_j:= \theta_j - \theta_{j-1}$ for $j = 1,\ldots,n$. We have
\begin{align*}
    \log \left|\det(\left(V(z_0,\ldots,z_n) \right)\right| &= \sum_{0\le j < k \le n} \log \left|z_j - z_k \right| \\
    &= \sum_{0\le j < k \le n}  \log \left( 2 \sin \frac{\theta_k - \theta_j}{2}  \right)\\
    &= \sum_{0\le j < k \le n}  \log \left( 2 \sin \left(\frac{1}{2} \sum_{\ell = j+1}^k \phi_\ell \right)  \right)=:f(\phi_1,\ldots,\phi_n).
\end{align*}
The function $\log(2 \sin (x/2))$ is concave on $(0,2 \pi)$, and so $f(\phi_1,\ldots,\phi_n)$, a sum of concave functions of linear forms, is also concave. The minimum value of $f(\phi_1,\ldots,\phi_n)$ on the polytope $\sum_{j=1}^n \phi_j \le 2 \pi - \epsilon$ and $\phi_j \ge \eps$ for $j = 1,\ldots,n$ occurs at an extreme point. Every extreme point corresponds to $n+1$ angles $\theta_j$ with gaps $\eps$ for all but one pair, with a gap of $2 \pi - n \epsilon$.
\end{proof}

Finally, in order to accurately estimate $\|V^{-1}\|_2$ for equispaced points on an arc, we require a better lower bound than that of \Cref{a17}. The below lemma provides the necessary test vector.

\begin{lemma}\label{a22}
Let $\eps < 2 \pi n - 2\epsilon^{1/2}$, $S = \set{j \in \{0,\ldots,n\} : \abs{j - \floor{n/2}} \le \sqrt{n} }$, and $\bm{x} \in \C^{n+1}$ be such that
\[\bm{x}_j = \begin{cases} (-1)^{n-j} e^{ij n \epsilon/2}|S|^{-1/2} & j \in S  \\ 0 & j \not\in S \end{cases}.\]
Then $\|\bm{x} \|_2 = 1$ and 
\[\|V^{-1}(1,e^{i\eps},\ldots,e^{i n  \eps}) \bm{x}\|_2^2 \ge \frac{C (1-(\xi-1)|S|)|S|}{\xi} \|V^{-1}(1,e^{i\eps},\ldots,e^{i n  \eps}) \bm{e}_{\lfloor n/2 \rfloor} \|_2^2,\]
for some constant $C>0$, where
\[\xi:= \max_{k \in S} \; \frac{\displaystyle{\int_{0}^{2 \pi} |L_{k}(e^{i \theta};\{e^{nij \eps}\}_{j = 0}^n)|^2 \wrt \theta}}{\displaystyle{\int_{n \eps + \eps}^{2 \pi - \eps} |L_{k}(e^{i \theta};\{e^{nij \eps}\}_{j = 0}^n)|^2 \wrt \theta}} .\]
\end{lemma}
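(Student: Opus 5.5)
We argue through the coefficient vectors of Lagrange polynomials, as in \Cref{a17}. Write $V = V(1,e^{i\eps},\ldots,e^{in\eps})$ and $L_k(z) = L_k(z;\{e^{ij\eps}\}_{j=0}^n)$. The vector $V^{-1}\bm e_k$ holds the monomial coefficients of $L_k$, so $V^{-1}\bm x$ holds the coefficients of $P(z) := \sum_{k\in S} \bm x_k L_k(z)$. By Parseval on the circle,
\[\|V^{-1}\bm x\|_2^2 = \frac1{2\pi}\int_0^{2\pi}|P(e^{i\theta})|^2\wrt\theta .\]
The identity $\|\bm x\|_2=1$ is immediate, since $|S|$ entries each have modulus $|S|^{-1/2}$.

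To bound the integral from below we discard the arc containing the nodes. We keep only the complementary arc $I=[n\eps+\eps,2\pi-\eps]$, which the hypothesis on $\eps$ guarantees is nonempty. There we need to show that the $|L_k|$ for $k\in S$ add essentially coherently.

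\textbf{Phase alignment.} For $z=e^{i\theta}$, write
\[L_k(z) = \prod_{j\ne k}\frac{z-e^{ij\eps}}{e^{ik\eps}-e^{ij\eps}}.\]
Use $e^{ia}-e^{ib}=2i\,e^{i(a+b)/2}\sin\frac{a-b}{2}$ for each factor. The numerator phases combine to $e^{i\theta n/2}$ times $e^{i\eps(\sum_j j - k)/2}$ times a sign, which is constant for $\theta\in I$ because every factor $\sin\frac{\theta-j\eps}{2}$ is positive there. The denominator contributes the phase $e^{i\eps(\sum_{j}j-k + nk)/2}$ and the sign $(-1)^{n-k}$ from the factors with $j>k$.

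Hence on $I$,
\[L_k(e^{i\theta}) = \omega(\theta)\,(-1)^{n-k}e^{-ikn\eps/2}\,|L_k(e^{i\theta})|,\]
with $\omega(\theta)$ unimodular and independent of $k$. This sign bookkeeping is the main place to get wrong, and it needs to be checked carefully. The choice $\bm x_k=(-1)^{n-k}e^{ikn\eps/2}|S|^{-1/2}$ exactly cancels these phases, so on $I$
\[|P| = |S|^{-1/2}\sum_{k\in S}|L_k| .\]
Expanding the square and dropping nothing,
\[\|V^{-1}\bm x\|_2^2 \ge \frac{1}{2\pi|S|}\int_I\Big(\sum_{k\in S}|L_k|\Big)^2 \ge \frac{1}{2\pi|S|}\sum_{k,l\in S}\int_I|L_k||L_l| .\]

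\textbf{Comparing to the central index.} Set $m=\lfloor n/2\rfloor$. For $k\in S$ we have $|k-m|\le\sqrt n$. The ratio
\[\frac{|L_k(e^{i\theta})|}{|L_m(e^{i\theta})|} = \frac{|e^{im\eps}-e^{i\theta}|\prod_{j\neq m}|e^{im\eps}-e^{ij\eps}|}{|e^{ik\eps}-e^{i\theta}|\prod_{j\ne k}|e^{ik\eps}-e^{ij\eps}|}\]
compares the nodal products at $k$ and at $m$. Those products are $\prod_j 2\sin(\eps|j-k|/2)$, and near the center they vary only by a factor $\exp(O((k-m)^2/n))=O(1)$. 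The proof of this is a second-difference estimate on $\sum_j\log\sin(\eps|j-k|/2)$ in $k$, plus the bounded factor $|e^{im\eps}-e^{i\theta}|/|e^{ik\eps}-e^{i\theta}|$ on $I$. This yields $|L_k|\ge c|L_m|$ on $I$. I expect this to be the main obstacle, since it must hold uniformly in $\eps$; if it is delicate I would fall back on the integral version that the constant $\xi$ encodes.

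\textbf{Role of $\xi$.} By definition of $\xi$, $\int_I|L_k|^2\ge \xi^{-1}\int_0^{2\pi}|L_k|^2$. Cauchy–Schwarz-type comparisons of $|L_k|$ with $|L_m|$ on $I$ then give $\int_I|L_k||L_l|\gtrsim \xi^{-1}\int_0^{2\pi}|L_m|^2$ for the diagonal and near-diagonal terms. The off-arc mass $1-1/\xi$ accounts for a loss in the cross terms proportional to $(\xi-1)$ per pair. Summing over the $|S|^2$ pairs gives a total of
\[\gtrsim \frac{|S|^2(1-(\xi-1)|S|)}{\xi}\int_0^{2\pi}|L_m|^2 .\]
Dividing by $2\pi|S|$ and using Parseval once more, $\frac1{2\pi}\int_0^{2\pi}|L_m|^2=\|V^{-1}\bm e_m\|_2^2$, which yields the stated bound $\frac{C(1-(\xi-1)|S|)|S|}{\xi}\|V^{-1}\bm e_{\lfloor n/2\rfloor}\|_2^2$.
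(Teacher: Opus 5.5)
Your proof is correct. Its two substantive ingredients are the paper's. First, the phase computation: your identity $L_k(e^{i\theta})=e^{in\theta/2}(-1)^{n-k}e^{-ikn\eps/2}\abs{L_k(e^{i\theta})}$ on $I$ is the paper's $\tilde L_k(e^{i\theta})=e^{in\theta/2}\ell_k(\theta)$ with $\ell_k>0$ there. Second, the pointwise comparison $\abs{L_k}\ge c\abs{L_m}$ on $I$ for $k\in S$.

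The genuine difference is how you treat the arc containing the nodes. The paper expands $\magn{V^{-1}\bm x}_2^2=\frac1{2\pi\abs S}\sum_{j,k\in S}\int_0^{2\pi}\ell_j\ell_k$ over the whole circle. It then bounds the off-arc part in absolute value by Cauchy--Schwarz and $\xi-1$. That step is the only source of the factor $1-(\xi-1)\abs S$, and it is why \Cref{a45} must make $\xi-1$ smaller than $1/\abs S$. You instead drop that arc outright because $\abs{P}^2\ge 0$. This is legitimate; the paper's off-arc sum is in fact $\int(\sum_j\ell_j)^2\ge0$. Your route then gives directly
\[\magn{V^{-1}\bm x}_2^2\ge\frac1{2\pi\abs S}\int_I\Big(\sum_{k\in S}\abs{L_k}\Big)^2\ge\frac{c^2\abs S}{2\pi}\int_I\abs{L_m}^2\ge\frac{c^2\abs S}{\xi}\magn{V^{-1}\bm e_m}_2^2 .\]
This uses only the $k=m$ instance of $\xi$, and downstream it needs only $\xi=O(1)$. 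The stated inequality follows because $\xi\ge1$ forces $1-(\xi-1)\abs S\le 1$.

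Your final paragraph should be replaced by exactly this chain. In your route there is no ``loss in the cross terms proportional to $\xi-1$.'' Nor is any Cauchy--Schwarz needed, since the pointwise bound controls every pair $(k,l)\in S\times S$, not just near-diagonal ones.

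The step you flagged is not delicate and is uniform in $\eps$; it is also the paper's estimate. Set $g(x)=\log(2\sin\frac{x\eps}{2})$. Since $x\eps<2\pi$, we have $g'(x)=\frac{\eps}{2}\cot\frac{x\eps}{2}\le\frac1x$. So each first difference $g(n-t)-g(t+1)$ of the log nodal product is at most $\frac{n-2t-1}{t+1}$. Summing over the $\abs{k-m}\le\sqrt n$ steps gives $O(\abs{k-m}^2/n)=O(1)$.

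The remaining factor $\sin\frac{\theta-m\eps}{2}\big/\sin\frac{\theta-k\eps}{2}$ is bounded below on $I$ by the same $\cot y\le 1/y$ argument. Both half-angles stay at least $(\frac n2-\sqrt n)\frac\eps2$ away from $0$ and $\pi$, while they differ by at most $\sqrt n\,\eps/2$. Falling back on $\xi$ here would not work, since $\xi$ only compares each $L_k$ with itself on two arcs and never relates different indices.
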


\begin{proof}
Let $\{L_j(z)\}_{j = 0}^n$ be the Lagrange polynomials for $\{1,e^{i \eps},\ldots,e^{i n \eps} \}$ and $\tilde L_j(z) = (-1)^{n-j} e^{ij n \epsilon/2} L_j(z)$ for $j \in \{0,\ldots,n\}$ be a phase adjusted version whose phases match those of the corresponding entries of $\bm{x}$. We have
\[\tilde L_j(e^{i \theta}) = (-1)^{n-j} e^{ij n \epsilon/2} \prod_{k \ne j} \frac{e^{i \theta} - e^{i k \eps}}{e^{ij \eps} - e^{i k \eps}} = (-1)^{n-j} e^{\frac{i n \theta}{2}} \prod_{k \ne j} \frac{\sin \frac{\theta - k \eps}{2} }{\sin \frac{(j-k)\eps}{2}} = e^{\frac{i n \theta}{2}}\prod_{k \ne j} \frac{\sin \frac{\theta - k \eps}{2} }{\left|\sin \frac{(j-k)\eps}{2}\right|} .\]
Let 
\[\ell_j(\theta) := \prod_{k \ne j} \frac{\sin \frac{\theta - k \eps}{2} }{\left|\sin \frac{(j-k)\eps}{2}\right|}\]
for $j \in \{0,\ldots,n\}$. Note that
\begin{align*}
    \|V^{-1} \bm{x}\|_2^2 &= \frac{1}{|S|} \sum_{j,k \in S} \langle (-1)^{n-j} e^{ij n \epsilon/2}  V^{-1} \bm{e}_j, (-1)^{n-k} e^{ik n \epsilon/2}  V^{-1} \bm{e}_k \rangle \\
    &= \frac{1}{2 \pi |S|} \sum_{j,k \in S} \int_0^{2 \pi} \tilde L_j(e^{i \theta}) \overline{ \tilde L_k(e^{i \theta})} \wrt \theta \\
    &= \frac{1}{2 \pi |S|} \sum_{j,k \in S} \int_0^{2 \pi} \ell_j(\theta) \, \ell_k(\theta) \wrt \theta. 
\end{align*}
Next, we aim to upper bound the contribution from integrating over $I := [0,\eps n + \eps] \cup [2 \pi - \eps, 2 \pi]$, allowing us to focus on $[n \eps + \eps, 2 \pi - \eps]$, a region where $\ell_j(\theta) >0$ for all $j \in S$. We have
\begin{align*}
\sum_{j,k \in S} \left| \int_I \ell_j(\theta) \, \ell_k(\theta) \wrt \theta \right| &\le \sum_{j,k \in S} \left( \int_I \ell_j(\theta)^2  \wrt \theta  \int_I \ell_k(\theta)^2  \wrt \theta\right)^{1/2} \\
&\le (\xi - 1) \sum_{j,k \in S} \left( \int_{n \eps + \eps}^{2 \pi - \eps} \ell_j(\theta)^2  \wrt \theta  \int_{n \eps + \eps}^{2 \pi - \eps} \ell_k(\theta)^2  \wrt \theta\right)^{1/2} \\
&= (\xi - 1) \left( \sum_{j \in S} \left( \int_{n \eps + \eps}^{2 \pi - \eps} \ell_j(\theta)^2  \wrt \theta  \right)^{1/2}  \right)^2 \\
&\le (\xi - 1) |S| \sum_{j \in S} \int_{n \eps + \eps}^{2 \pi - \eps} \ell_j(\theta)^2  \wrt \theta,
\end{align*}
giving the lower bound
\[ \|V^{-1} \bm{x}\|_2^2 \ge \frac{1-(\xi-1)|S|}{2 \pi |S|} \sum_{j,k \in S} \int_{n \eps + \eps}^{2 \pi -\eps} \ell_j(\theta) \, \ell_k(\theta) \wrt \theta.\]
In addition, for a fixed $\theta \in [n \eps + \eps, 2 \pi - \eps]$ the value of $\ell_j(\theta)$ for $j \in S$ are all bounded below by a constant factor times $\ell_{\left\lfloor n/2\right\rfloor}(\theta)$. Note that $\ell_j(\theta)$, for $j \in S$, and $\ell_{\left\lfloor n/2\right\rfloor}(\theta)$ are both the product of sine terms and only disagree in roughly $|S|$ many of them. Suppose, without loss of generality, that $j < \lfloor n/2 \rfloor$. Then, for $\theta \in [n \eps + \eps, 2 \pi - \eps]$ and $j \in S$,
\[\left|\frac{\ell_j(\theta)}{\ell_{\left\lfloor n/2\right\rfloor}(\theta)} \right| = \left|\frac{\sin \frac{\theta - \lfloor n/2 \rfloor \eps}{2} }{\sin \frac{\theta - j \eps}{2} } \right| \prod_{m = 1}^{\lfloor n/2 \rfloor - j} \left|\frac{\sin \frac{(j+m)\eps}{2}}{\sin \frac{(n - \lfloor n/2 \rfloor + m)\eps}{2}}   \right| \ge \left( \frac{\sin \frac{j\eps}{2}}{\sin \frac{(n +1 - j)\eps}{2}} \right)^{\sqrt{n}+1}.\]
Taking logarithms of both sides and using the bound
\[\frac{\log \sin \frac{y}{2} - \log \sin \frac{x}{2}}{y-x} \le \frac{1}{2} \cot \frac{x}{2} \le \frac{1}{x} \qquad \text{for} \quad 0 <x<y< 2 \pi,\]
we obtain the lower bound
\begin{align*}
\min_{\substack{m \le \sqrt{n}\\ \theta \in [n \eps + \eps, 2 \pi - \eps]}} \log \left| \frac{\ell_{\lfloor n/2 \rfloor - m}(\theta)}{\ell_{\lfloor n/2 \rfloor }(\theta)} \right| &\ge -(\sqrt{n}+1) \left(\log \sin \left( \frac{(\frac{n}{2} + \sqrt{n} + 2)\eps}{2} \right) - \log \sin \left(\frac{(\frac{n}{2} - \sqrt{n} -1)\eps}{2}\right) \right) \\
&\ge - \frac{(\sqrt{n}+1)(2 \sqrt{n} +3)\eps}{2} \frac{2}{(\frac{n}{2} - \sqrt{n} -1)\eps}\\
&\ge C
\end{align*}
for some fixed constant $C$. Combining this with our previous lower bound, we obtain our desired result
\begin{align*}
    \|V^{-1}\bm{x}\|_2^2 &\ge \frac{e^{2C}(1-(\xi-1)|S|)}{2 \pi |S|} \sum_{j,k \in S} \int_{n \eps + \eps}^{2 \pi - \eps} \ell_{\lfloor n/2 \rfloor }(\theta)^2 \wrt \theta \\
    &=\frac{e^{2C}(1-(\xi-1)|S|)|S|}{2 \pi } \int_{n \eps + \eps}^{2 \pi - \eps} \ell_{\lfloor n/2 \rfloor }(\theta)^2 \wrt \theta \\
    &\ge \frac{e^{2C}(1-(\xi-1)|S|)|S|}{2 \pi \xi}  \int_{0}^{2 \pi} \ell_{\lfloor n/2 \rfloor }(\theta)^2 \wrt \theta \\
    &= \frac{e^{2C}(1-(\xi-1)|S|)|S|}{ \xi}  \|V^{-1} \bm{e}_{\lfloor n/2 \rfloor}\|_2^2.
\end{align*} 

\end{proof}

\section{Equispaced Lagrange interpolation and Riemann summation}\label{a13}

Here we approximate $\abs{L_k(z;\{e^{ij\eps}\}_{j=0}^n)}$ using Riemann summation. We handle the numerator and denominator separately by taking the logarithm:
\[
\log\abs{L_k\pare{z;\set{e^{ij\eps}}_{j=0}^n}}
=\sum_{j\neq k}\log\frac1{\abs{e^{ik\eps}-e^{ij\eps}}}
-\sum_{j\neq k}\log\frac1{\abs{z-e^{ij\eps}}}.\]
Define the functions
\eq{\label{a23}
U(\theta)=\sum_{j=0}^n\log\frac1{\abs{e^{i\theta}-e^{ij\eps}}},
\quad
U_k(\theta)=\sum_{j\neq k}\log\frac1{\abs{e^{i\theta}-e^{ij\eps}}}=U(\theta)+\log\abs{e^{i\theta}-e^{ik\eps}},}
so that
\(\log\abs{L_k(e^{i\theta})}=U_k(k\eps)-U_k(\theta).\) We refer the reader to \cite[Chapter 5]{b34} and \cite{b35} for an introduction to the connection between polynomial interpolation and logarithmic potential theory.
For each fixed $\theta$, the value of these functions look quite similar to Riemann sums corresponding to the anti-derivative of the function
\[
f(x)
=-\log{\abs{1-e^{ix}}}
=-\log\abs{2\sin{\frac{x}2}}.\]
The definite integral of $f(x)$ from $0$ to $\theta$ is known as the Clausen function,
\[\Cl(\theta)=-\int_0^\theta \log\abs{2\sin\frac x2}\wrt x.\]
See \Cref{a24} for a plot of $f(x)$ and $\Cl (\theta)$. We recall the following standard results regarding $f(x)$ and $\Cl(\theta)$ which will prove useful for our analysis. For brevity, in this section we use the notation $a\wedge b:=\min(a,b)$.

\begin{proposition}\label{a25}
Let $x \in (0,2 \pi)$. Then
\begin{enumerate}
\item $f'(x) = \tfrac{1}{2} \cot \tfrac{x}{2}$, $f''(x) = - \tfrac{1}{4} \csc^2 \tfrac{x}{2}$, and $f(x)$ is decreasing for $x \in (0,\pi]$, \\[-2 ex]
\item $f^{(2 \ell)}(x)>0$ for all $\ell >0$,\\[-2 ex]
\item $|f(x)|\le \log 2$ for $x \in \left(2 \arcsin \tfrac{1}{4},2 \pi - 2 \arcsin \tfrac{1}{4}\right)$,\\[-2 ex]
\item $\Cl(2 \pi - x) = - \Cl(x)$,\\[-2 ex]
\item $\Cl(x)-2\Cl(\pi+x/2)=2\Cl(x/2)$,\\[-2 ex]
\item For $x \in (0,\pi -\eps]$,
\[ \left| \frac{\Cl(x+\eps) - \Cl(x)}{\eps} - \log  x \right| \le \frac{1}{2} \frac{\eps}{x} + \log \left( \frac{2}{\pi} \right)\]
and for $x \in [\pi + \epsilon, 2 \pi)$,
\[ \left| \frac{\Cl(x) - \Cl(x-\eps)}{\eps} + \log \left(2 \pi - x \right) \right| \le \frac{1}{2} \frac{\eps}{2 \pi - x} + \log \left( \frac{2}{\pi} \right),\]\\[-2 ex]
\item For $\alpha \in (0,1)$,
\[\frac{\Cl(\alpha \pi)}{\alpha (1-\alpha) \log \tfrac{2}{\alpha}} \in \left[\tfrac{1}{\pi},\tfrac{2}{5} \right],\]\\[-2 ex]
\item $\Cl(x/2) - \Cl(\pi+x/2) = 2 \displaystyle{\int_0^{\frac{x}{4}} \log \cot \phi \wrt \phi}$.
\end{enumerate}
\end{proposition}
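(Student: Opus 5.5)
The plan is to prove all eight items from three ingredients: the closed form $f(x)=-\log\abs{2\sin\frac x2}$, the product formula $\sin x = 2\sin\frac x2\cos\frac x2$, and the partial-fraction expansion of the cotangent. Items (1)--(3) are pointwise facts about $f$; items (4), (5) and (8) are integral identities; items (6) and (7) are estimates.

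\emph{Items (1)--(3).} Differentiating $-\log(2\sin\frac x2)$ on $(0,2\pi)$ gives $f'(x)=-\frac12\cot\frac x2$ and $f''(x)=\frac14\csc^2\frac x2$. I will keep track of the sign: it is the one for which $f$ is decreasing on $(0,\pi]$, as item (1) asserts. For (2) I use the expansion $\frac12\cot\frac x2=\sum_{k\in\mathbb Z}\frac1{x-2\pi k}$ (symmetric summation). Differentiating termwise gives
\[ f^{(2\ell)}(x)=(2\ell-1)!\sum_{k\in\mathbb Z}\frac1{(x-2\pi k)^{2\ell}}>0 , \]
since every term is positive. For (3), on the stated interval we have $\sin\frac x2\ge\frac14$, so $2\sin\frac x2\in[\frac12,2]$ and hence $|f|\le\log 2$.

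\emph{Items (4), (5), (8).} Since $f(2\pi-x)=f(x)$, we get $\Cl(2\pi-x)=\Cl(2\pi)-\Cl(x)$. It remains to show $\Cl(2\pi)=0$, i.e.\ $\int_0^{2\pi}\log\abs{2\sin\frac t2}\wrt t=0$. This follows from the standard value $\int_0^\pi\log\sin u\wrt u=-\pi\log 2$, or alternatively from the duplication identity below by a bootstrap argument. For (5), take logarithms in $2\sin t=(2\sin\frac t2)(2\sin\frac{t+\pi}2)$ and integrate from $0$ to $x/2$. This gives $\frac12\Cl(x)=\Cl(x/2)+\Cl(\pi+x/2)-\Cl(\pi)$. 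By (4), $\Cl(\pi)=-\Cl(\pi)$, so $\Cl(\pi)=0$ and (5) follows. For (8), write
\[ \Cl(\pi+x/2)=\Cl(\pi)-\int_\pi^{\pi+x/2}\log\bigl(2\sin\tfrac t2\bigr)\wrt t=-\int_0^{x/2}\log\bigl(2\cos\tfrac u2\bigr)\wrt u . \]
Subtracting this from $\Cl(x/2)$ gives $\int_0^{x/2}\log\cot\frac u2\wrt u$, and the substitution $\phi=u/2$ gives the claim.

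\emph{Item (6).} The difference quotient equals the average of $f$ over $[x,x+\eps]$. Write $f(t)=-\log t+\log\frac{t}{2\sin(t/2)}$. The function $t/(2\sin\frac t2)$ increases on $(0,\pi]$ from $1$ to $\pi/2$, so the second term lies in $[0,\log\frac\pi2]$. The first term differs from $-\log x$ by at most $(t-x)/x$, and averaging over $[x,x+\eps]$ gives at most $\frac\eps{2x}$. Combined, these give the first inequality, up to the sign/normalization conventions in the statement, which I will reconcile when writing it out. The second inequality follows from the first by the reflection $f(2\pi-x)=f(x)$.

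\emph{Item (7).} This is the step I expect to be the main obstacle, since it is a two-sided numerical bound rather than an identity. The ratio is continuous on $(0,1)$, so the plan is to (i) compute its limits at both endpoints and (ii) control the interior. For (i), use $\Cl(\theta)=\theta\log\frac1\theta+\theta+O(\theta^3)$ as $\theta\to0$, and $\Cl(\pi-y)=y\log2+O(y^3)$ as $y\to0$ (from $\Cl(\pi)=0$ and $\Cl'(\pi)=f(\pi)=-\log2$). For (ii), bound $\Cl(\alpha\pi)$ above and below by integrating the bounds from (6) and (3) piecewise, then finish with an elementary monotonicity or concavity check in $\alpha$. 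A first sanity check is needed here. The endpoint asymptotics as I have written them make the ratio tend to $\pi$ at both ends, which is inconsistent with the stated interval $[\frac1\pi,\frac25]$. So before anything else I will recheck the normalization of $\Cl$ and of the denominator; the interval may carry a factor of $\pi$ or require an adjusted constant. I will settle this first, since the rest of the argument for (7) depends on it.
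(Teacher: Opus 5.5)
The paper gives no proof of this proposition; it only ``recalls'' the eight items as standard. So the comparison below is against the statement itself.

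\textbf{Items (1)--(6) and (8).} Your treatment of items (2)--(5) and (8) is correct. For (1) and (6) your computations are also right, but you should state plainly that the statement as printed is wrong, rather than promising to reconcile ``conventions''.
\begin{itemize}
\item On $(0,2\pi)$ one has $f'(x)=-\tfrac12\cot\tfrac x2$ and $f''(x)=\tfrac14\csc^2\tfrac x2$. The printed signs would even contradict item (2) at $\ell=1$.
\item In (6), the first inequality must read $\bigl|\frac{\Cl(x+\eps)-\Cl(x)}{\eps}+\log x\bigr|\le\frac{\eps}{2x}+\log\frac\pi2$. As printed, the right-hand side $\frac{\eps}{2x}+\log\frac2\pi$ is negative when $\eps\ll x$, so the inequality is false.
\item In the second inequality of (6), only $\log\frac2\pi$ needs to become $\log\frac\pi2$; the sign of $\log(2\pi-x)$ is already correct.
\end{itemize}
Your averaging argument proves exactly these corrected forms.

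\textbf{The gap: item (7).} Your proposal leaves (7) unproven. Your sanity check is right: the ratio tends to $\pi$ at both endpoints, so the printed interval $[\frac1\pi,\frac25]$ is impossible. For example, at $\alpha=\frac12$ the ratio is $G/(\frac12\log 2)\approx 2.64$.

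The fix is not a stray factor of $\pi$; the fraction is inverted. The true statement is $\frac{\alpha(1-\alpha)\log(2/\alpha)}{\Cl(\alpha\pi)}\in[\frac1\pi,\frac25]$, i.e.\ the ratio in the proposition lies in $[\frac52,\pi]$. Numerically its minimum is about $2.504$, attained near $\alpha\approx 0.16$.

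With that little room, your plan of integrating the bounds from (3) and (6) piecewise is far too lossy to reach $\frac52$, since those errors are of order $\log\frac\pi2\approx 0.45$. The sharp constant would need an expansion such as $\Cl(\theta)=\theta-\theta\log\theta+\frac{\theta^3}{72}+\frac{\theta^5}{14400}+\cdots$ with an explicit remainder, together with a careful numerical check.

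However, the only use of (7) in the paper, in \Cref{a31}, is that the ratio is bounded above and below by positive absolute constants. You already have almost everything needed for that:
\begin{itemize}
\item $\Cl''=f'<0$ on $(0,\pi)$, so $\Cl$ is concave on $[0,\pi]$.
\item $\Cl(0)=\Cl(\pi)=0$, so $\Cl(\alpha\pi)>0$ for $\alpha\in(0,1)$.
\item The ratio is therefore continuous and positive on $(0,1)$, with your computed finite positive limits ($\pi$) at both ends, hence bounded between two positive absolute constants.
\end{itemize}
You should either prove this qualitative version and present it as the corrected (7), noting that \Cref{a31} is unaffected, or supply the sharp numerics for $[\frac52,\pi]$.
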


The remainder of the section is as follows. In \Cref{a26}, we represent the functions $U_k(\cdot)$ and $U(\cdot)$ as Riemann sums for the Clausen function. This consists of a series of lemmas, concluding with estimates for $U_k(k \epsilon)$ for $k = 0,\ldots,n$ (\Cref{a27}) and estimates for $U(\epsilon n + \pi)$ and $U(\epsilon n + \epsilon)$ (\Cref{a28}). These estimates are sufficient for estimating $\max_{k \in \{0,\ldots,n\}} \max_{z\in\sph1} \,\abs{L_k\big(z;\{e^{ij \eps}\}_{j=0}^n\big)}$, but additional analysis is required to correctly estimate the integral of $U(\theta)$. To this end, in \Cref{a29}, we produce two quadratic functions that upper and lower bound $U(\theta)$ on the interval $(n \epsilon, 2 \pi)$ (\Cref{a30}) and argue that their second derivatives are within a constant factor of each other (\Cref{a31}). In \Cref{a32}, we use these upper and lower bounds to produce tight estimates for the integral of $\abs{L_k\big(z;\{e^{ij \eps}\}_{j=0}^n\big)}^2$ (\Cref{a33}) and the sum of such integrals (\Cref{a34}).

\begin{figure}[t]
\centering
  \begin{subfigure}[t]{0.48\textwidth}
    \centering
    \includegraphics[width =.8\textwidth]{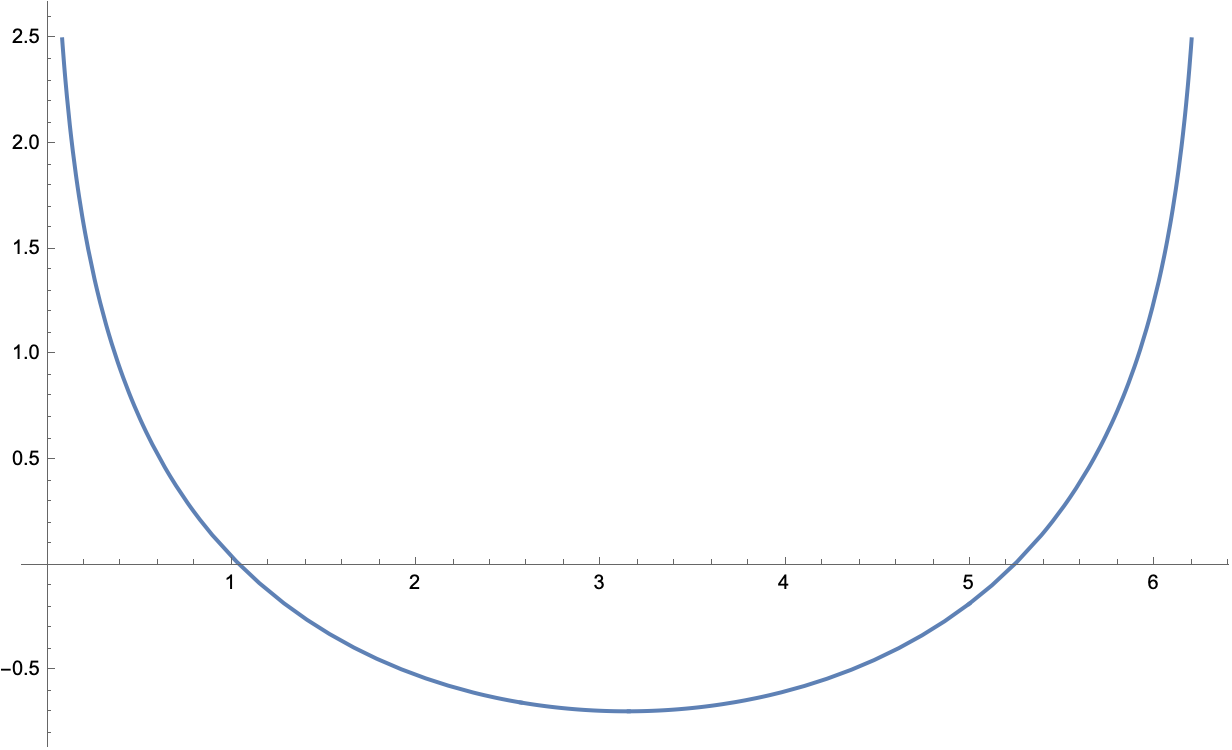}
    \caption{$f(x) = -\log\abs{2\sin\frac x2}$}
    \label{a35}
  \end{subfigure}\hfill
  \begin{subfigure}[t]{0.48\textwidth}
    \centering
\includegraphics[width =.8\textwidth]{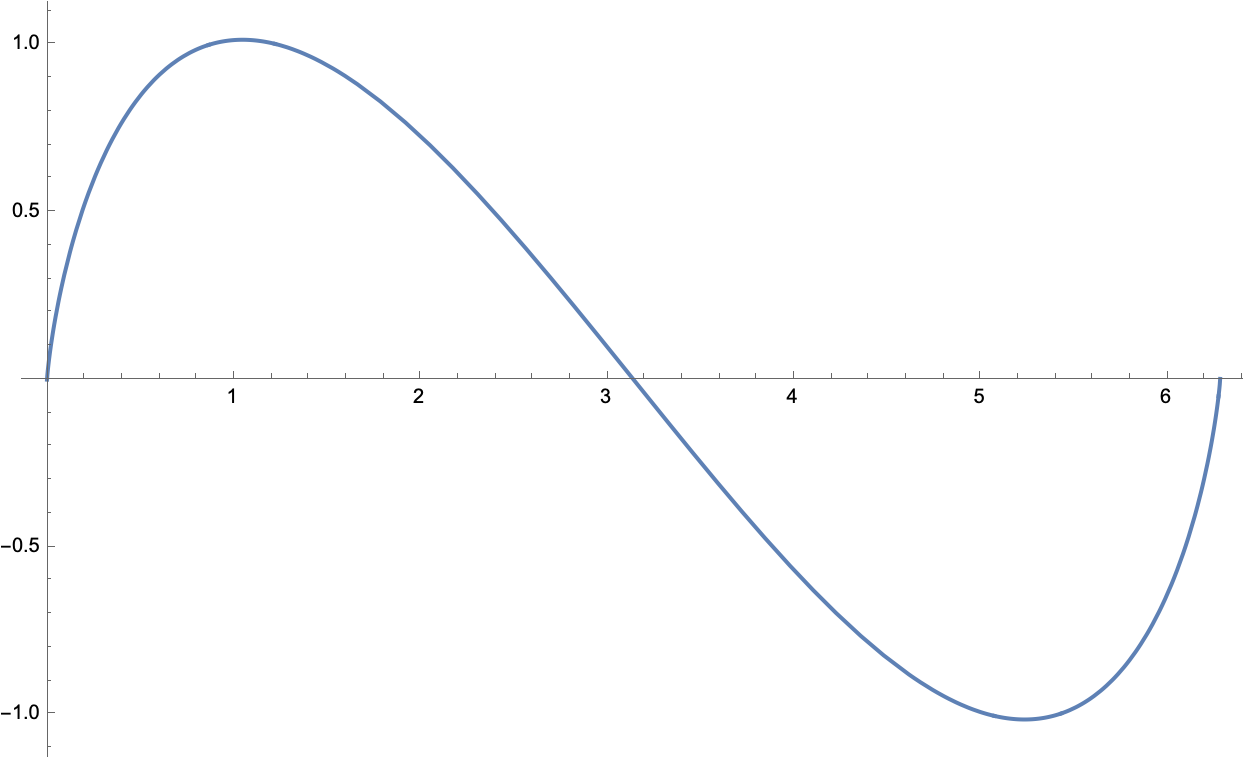}
    \caption{$\Cl(\theta) =-\int_0^\theta \log\abs{2\sin\frac x2}\wrt x$}
    \label{a36}
  \end{subfigure}
 \caption{Plots of $f(x)$ and $\Cl(\theta)$ on the interval $[0,2 \pi]$.}
\label{a24}
\end{figure}

\subsection{$U(\cdot)$ as a Riemann sum for $\Cl(\cdot)$}\label{a26}

\begin{lemma}\label{a37}
Let $(a,b)\subset[0,\pi]$.
\[
\abs{
\sum_{j=1}^m\log{\abs{2\sin\pare{\frac{a+\frac{b-a}m j}2}}}
+\frac{\Cl(b)-\Cl(a)}{\frac{b-a}m}
-\frac12\log\pare{\frac{b+\frac{b-a}m}{a+\frac{b-a}m}}
}\le \frac{3}{2}.
\]
\end{lemma}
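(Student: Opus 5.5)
Write $h=\frac{b-a}{m}$ and $x_j=a+jh$, so that $x_m=b$. Since $\log\abs{2\sin(x/2)}=-f(x)$ and $\Cl(b)-\Cl(a)=\int_a^b f$, the quantity inside the absolute value is
\[
E:=\sum_{j=1}^m\Big(\frac1h\int_{x_{j-1}}^{x_j}f(x)\wrt x-f(x_j)\Big)-\frac12\log\frac{b+h}{a+h}.
\]
This is a right-endpoint Riemann-sum error, and the aim is to show $\abs{E}\le\frac32$. The singular behaviour of $f$ near $0$ is exactly that of $-\log x$. So I will split $f(x)=-\log x+g(x)$ with $g(x)=-\log\frac{2\sin(x/2)}{x}$, and treat the two pieces separately.

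\emph{The regular part $g$.} Since $\frac{\sin(x/2)}{x/2}$ decreases from $1$ to $\frac{2}{\pi}$ on $(0,\pi]$, $g$ is increasing, with $g(0^+)=0$ and $g(\pi)=\log\frac\pi2$. For a monotone function, each term $\frac1h\int_{x_{j-1}}^{x_j}g-g(x_j)$ lies between $0$ and $-(g(x_j)-g(x_{j-1}))$. These bounds telescope, so the total $g$-contribution to $E$ lies in $[-\log\frac\pi2,0]$.

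\emph{The singular part $-\log x$.} This part is decreasing and convex, so each term $\frac1h\int_{x_{j-1}}^{x_j}(-\log x)\wrt x+\log x_j$ is nonnegative. I separate out the first interval $[a,a+h]$.
\begin{itemize}
\item On the first interval, a direct computation (the worst case is $a=0$) shows the term lies in $[0,1]$.
\item For $j\ge2$, the chord bound from convexity and monotonicity caps each term by $\frac12(\log x_j-\log x_{j-1})$. These telescope to at most $\frac12\log\frac{b}{a+h}\le\frac12\log\frac{b+h}{a+h}$.
\item For $j\ge 2$, the tangent-line bound at $x_j$ gives each term at least $\frac{h}{2x_j}$. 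Comparing with an integral, $\sum_{j\ge2}\frac{h}{2x_j}\ge\frac12\int_{a+2h}^{b+h}\frac{\wrt x}{x}=\frac12\log\frac{b+h}{a+2h}$.
\item Finally, $\frac{a+2h}{a+h}\le 2$, so the last lower bound is at least $\frac12\log\frac{b+h}{a+h}-\frac12\log 2$.
\end{itemize}

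\emph{Combining.} The singular part therefore equals $\frac12\log\frac{b+h}{a+h}$ up to an error in $[-\frac12\log2,\,1]$. Adding the $g$-contribution in $[-\log\frac\pi2,0]$ gives
\[
-\tfrac12\log 2-\log\tfrac\pi2\le E\le 1 .
\]
Since $\frac12\log2+\log\frac\pi2\approx0.80$, this yields $\abs{E}\le\frac32$.

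The main obstacle is the bookkeeping near $a$, particularly when $a=0$ and $f(a)=\infty$. This is why the first interval is separated out, and why the comparison is made against $\log\frac{b+h}{a+h}$ rather than $\log\frac ba$. The constants above have slack, so the crude estimates suffice.
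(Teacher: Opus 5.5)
Your proof is correct, and it takes a different route from the paper's.

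\textbf{The paper's route.} The paper works with $f(x)=-\log|2\sin\frac x2|$ directly. On each cell it bounds the integral of the convex, decreasing function $f_j$ from below and above by right triangles whose slopes are $\frac12\cot\frac{a+jh}{2}$ and $\frac12\cot\frac{a+(j-1)h}{2}$, and it disposes of the first cell via $\int_0^h f_1\le h$. It then converts the cotangent sum into a harmonic sum using $\frac1x-\frac2\pi\le\cot x\le\frac1x$. Finally it evaluates that sum through the digamma function with $\log x-\frac1x\le\psi(x)\le\log x-\frac1{2x}$, absorbing the leftover $-\frac{mh^2}{2\pi}$ via $mh\le\pi$. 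This yields an error in $[-\frac34,\frac32]$.

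\textbf{Your route.} You instead split off the singularity exactly, writing $f=-\log x+g$ with $g$ increasing from $0$ to $\log\frac\pi2$ on $(0,\pi]$.
\begin{itemize}
\item The regular part then costs at most the total variation of $g$, by pure monotonicity and telescoping.
\item For the pure logarithm, the tangent slope is exactly $1/x_j$. The chord (trapezoid) bound therefore telescopes straight to $\frac12\log\frac{b}{a+h}$, and the lower bound needs only the integral comparison $\sum_{j\ge2}h/x_j\ge\log\frac{b+h}{a+2h}$.
\item The first cell is computed exactly, giving $1+\frac{s\log s}{1-s}\in[0,1]$ with $s=a/(a+h)$.
\end{itemize}

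\textbf{Comparison.} Your approach avoids the cotangent approximation and the digamma asymptotics entirely. It also gives a sharper constant: your error lies in $[-\frac12\log 2-\log\frac\pi2,\,1]$, so in fact $|E|\le 1$. The paper's argument is a mechanical application of the same convexity trick to $f$ itself. Yours shows more transparently that the logarithmic singularity at $0$ is the only feature producing the $\frac12\log\frac{b+h}{a+h}$ correction, and that the rest of $f$ contributes a bounded amount.
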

\begin{proof}
Set $\eps=\frac{b-a}m$. Let
\spliteq{}{
A:=-\int_a^{a+\eps m}\log\abs{2\sin\frac x2}\wrt x
+
\eps\sum_{j=1}^m\log\abs{2\sin\frac{a+j\eps}2}
&=
-\sum_{j=1}^m
\int_{(j-1)\eps}^{j\eps}\log\abs{\frac{\sin\frac{a+x}2}{\sin\frac{a+j\eps}2}}\wrt x.
}
We consider the integral in the summand for each $j$. Note that the function \[f_j(x):=-\log\abs{\frac{\sin\frac{a+x}2}{\sin\frac{a+j\eps}2}}\] is decreasing and convex for $x \le \pi$, so the integral is lower bounded the axis-aligned right triangle with width $\eps$ and slope given by the slope of $f_j(x)$ at $x=j\eps$,
\eq{
\int_{(j-1)\eps}^{j\eps}f_j(x)\wrt x
\ge
\frac{\eps^2}2\abs{f'_j(j\eps)}
=
\frac{\eps^2}4\cot\frac{a+j\eps}2.}
It is similarly upper bounded (for $j\ge2$) by the triangle of width $\eps$ and slope given by the slope of $f_j(x)$ at $x=(j-1)\eps$,
\eq{
\int_{(j-1)\eps}^{j\eps}f_j(x)\wrt x
\le
\frac{\eps^2}2\abs{f'_j((j-1)\eps)}=\frac{\eps^2}4\cot\frac{a+(j-1)\eps}2.}
Computing the sum over $j$ gives
\spliteq{}{
\frac{\eps^2}4\sum_{j=1}^m\cot\frac{a+j\eps}2
\le A
  &\le\int_0^\eps f_1(x)\wrt x+\frac{\eps^2}4\sum_{j=2}^{m}\cot\frac{a+(j-1)\eps}2
\\&\le\int_0^\eps f_1(x)\wrt x
+\frac{\eps^2}4\sum_{j=1}^m\cot\frac{a+j\eps}2.}
Notably, the same sum appears on both the left and right hand side. To approximate this sum, we use $1/x - 2/\pi \le\cot x\le 1/x$ for $0 < x\le\pi/2$, giving
\[
-\frac{2m}{\pi}+\sum_{j=1}^m\frac2{a+j\eps}
\le
\sum_{j=1}^m\cot\frac{a+j\eps}2
\le
\sum_{j=1}^m\frac2{a+j\eps}
.\]
This sum can be expressed exactly in terms of the digamma function $\psi$,
\[
\sum_{j=1}^m\frac2{a+j\eps}
=\frac2\eps
\sum_{j=1}^m\frac1{\frac a\eps+j}
=\frac2\eps\pare{\psi\pare{\frac a\eps+m+1}-\psi\pare{\frac a\eps+1}},\]
which in turn admits a close approximation, \(\log x-\frac1x\le\psi(x)\le\log x-\frac1{2x}\) for $x>0$.
The remaining term to be controlled in the upper bound on $A$ is
\[
g(\eps):=\int_0^\eps f_1(x)\wrt x
=
-\int_0^\eps\log\abs{\frac{\sin\frac{a+x}2}{\sin\frac{a+\eps}2}}\wrt x.\]
Note that $\lim_{\eps\to0}g(\eps)=0$ and
\(g'(\eps)=\frac\eps2\cot\pare{\frac{a+\eps}2}\le1\), so $g(\eps)\le\eps$.
The upper bound on $A$ is therefore
\[
\eps+\frac{\eps^2}4\frac2\eps\pare{\psi\pare{\frac a\eps+m+1}-\psi\pare{\frac a\eps+1}}
\le
\eps+\frac{\eps}2\pare{\log\pare{\frac{a+(m+1)\eps}{a+\eps}}+1}
\]
and the lower bound is
\[
\frac{\eps^2}4\pare{-\frac{2m}{\pi} +\frac2\eps\pare{\psi\pare{\frac a\eps+m+1}-\psi\pare{\frac a\eps+1}}}
\ge
-\frac{m\eps^2}{2 \pi}+\frac{\eps}2\pare{\log\pare{\frac{a+(m+1)\eps}{a+\eps}}-\frac12}.
\]
Using $m\eps\le\pi$ gives the final result.
\end{proof}

\begin{lemma}\label{a38}
Let $(a,b)\subset[0,2\pi)$ be such that $\pi\in(a,b)$. Then
\[
\abs{\sum_{j=1}^m\log\abs{2\sin\frac{a+\frac{b-a}{m+1} j}2}+\frac{\Cl(b)-\Cl(a)}{\frac{b-a}{m+1}}-\frac12\log\pare{\frac\pi{a+\frac{b-a}{m+1}}}
-\frac12\log\pare{\frac{\pi}{2\pi-b+\frac{b-a}{m+1}}}
} \le 5.\]
\end{lemma}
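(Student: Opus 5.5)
The plan is to reduce \Cref{a38} to two applications of \Cref{a37}, one on each side of $\pi$, using the symmetry $\Cl(2\pi-x)=-\Cl(x)$ from \Cref{a25}(4) and the reflection $|\sin\frac{2\pi-y}{2}|=|\sin\frac y2|$.

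Set $\eps=\frac{b-a}{m+1}$ and nodes $x_j=a+j\eps$, $j=1,\ldots,m$. Let $p$ be the largest index with $x_p\le\pi$. Then $x_{p+1}>\pi$ if $p<m$, and $x_{m+1}=b>\pi$ in any case. The steps are:

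\textbf{Left part.} Apply \Cref{a37} on $(a,x_p)\subset[0,\pi]$ with $p$ steps of size $\eps$. This controls $\sum_{j=1}^p\log|2\sin\frac{x_j}2|$ in terms of $-(\Cl(x_p)-\Cl(a))/\eps+\frac12\log\frac{x_p+\eps}{a+\eps}$, with error at most $3/2$.

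\textbf{Right part.} Put $y_j=2\pi-x_j$ for $j=p+1,\ldots,m$. These are equispaced points in $[0,\pi)$, running from $2\pi-b+\eps$ up to $y_{p+1}<\pi$. Apply \Cref{a37} on $(2\pi-b,\,y_{p+1})$ with $m-p$ steps. Reflection leaves the summands unchanged, and $\Cl(2\pi-x)=-\Cl(x)$ turns the $\Cl$-difference into $(\Cl(b)-\Cl(x_{p+1}))/\eps$. The log term is $\frac12\log\frac{y_{p+1}+\eps}{2\pi-b+\eps}$, again with error at most $3/2$. If the range is empty ($p=m$, or $p=0$ on the left), the corresponding sum is $0$ and only the correction terms below are needed.

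\textbf{Gluing.} Adding the two pieces gives $-(\Cl(b)-\Cl(a))/\eps$ plus the correction $(\Cl(x_{p+1})-\Cl(x_p))/\eps$. The interval $[x_p,x_{p+1}]$ has length $\eps$ and contains $\pi$, where $|\log|2\sin\frac x2||\le\log 2$ by \Cref{a25}(3), since $\pi$ lies well inside the allowed range. If $\eps$ is not small this bound need not hold on the whole interval, and I would handle it separately: the integrand is bounded above by $\log 2$, and its integral over any interval of length $\le 2\pi$ is bounded in magnitude by an absolute constant. In either case the correction is $O(1)$.

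For the log terms, $x_p+\eps\in(\pi,\pi+\eps]$ and $y_{p+1}+\eps\in(\pi,\pi+\eps]$, up to a harmless shift. So $\frac12\log\frac{x_p+\eps}{a+\eps}$ differs from $\frac12\log\frac{\pi}{a+\eps}$ by at most $\frac12\log 2$ when $\eps\le\pi$. The same holds on the right side, which produces exactly the two stated logarithms. This step assumes $\eps\le\pi$, i.e.\ $m\ge1$, since $b-a<2\pi$.

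\textbf{Main obstacle.} The difficulty is bookkeeping the constants so that the total is at most $5$: $3/2+3/2$ from the two applications of \Cref{a37}, at most $\log 2$ from the gluing term, and at most $\log 2$ from the two log adjustments. Degenerate cases need checking too, namely $p=0$ or $p=m$, or $x_p=\pi$ exactly. There one side is empty and the other side's endpoint sits at $\pi$, so the corresponding $\frac12\log$ term must be verified directly to be bounded by a constant. The crude case $\eps$ large, where $m$ is tiny, can be checked directly, since every term is then bounded.
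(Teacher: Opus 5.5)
Your proposal is correct and matches the paper's proof. Both split at $p=\lfloor(\pi-a)/\eps\rfloor$, apply \Cref{a37} on each side (reflecting the right-hand nodes and using $\Cl(2\pi-x)=-\Cl(x)$), bound the gluing term $(\Cl(x_{p+1})-\Cl(x_p))/\eps$ by $\log 2$ via \Cref{a25}(3), and absorb the two logarithmic adjustments because $x_{p+1}$ and $2\pi-x_p$ lie in $[\pi,\pi+\eps]$.

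The paper handles your large-$\eps$ concern by assuming $m\ge 2$. This forces $\eps<2\pi/3$, so $[\pi-\eps,\pi+\eps]$ lies where $|f|\le\log 2$, and only $m=1$ remains to be checked by inspection.
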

\begin{proof}
Let $m \ge 2$ (the case $m = 1$ follows by inspection), $\eps=\frac{b-a}{m+1}$, and $k=\floor{\frac{\pi-a}\eps}$. We may split our sum into two parts:
    \[
    -\sum_{j=1}^m\log\abs{2\sin\frac{a+\eps j}2}
    =
    -\sum_{j=1}^k\log\abs{2\sin\frac{a+\eps j}2}
    -
    \sum_{j=1}^{m-k}\log\abs{2\sin\frac{2\pi-a-\eps(m+1)+\eps j}2}.\]
\Cref{a37} can be applied to each of these terms. The first term is bounded by
\[
\frac{\Cl(a+k\eps)-\Cl(a)}\eps-\frac12\log\pare{\frac{a+(k+1)\eps}{a+\eps}}\pm \frac{3}{2}
\]
and the second term is bounded by
\[
\frac{\Cl(2\pi-a-\eps(k+1))-\Cl(2\pi-a-\eps(m+1))}\eps-\frac12\log\pare{\frac{2\pi-a-\eps k}{2\pi-a-\eps m}}\pm \frac{3}{2}.\]
Furthermore,
note that $\Cl$ is odd and $2\pi$-periodic so $\Cl(2\pi-x)=-\Cl(x)$.
Then since $\pi \in [a + k \eps ,a + (k+1)\eps)$ and $|\Cl'(\theta)| \le \log 2$ for $\theta \in (\pi - \eps, \pi + \eps) \subset (2 \arcsin \tfrac{1}{4}, 2 \pi - 2 \arcsin \tfrac{1}{4})$,
\[
\abs{\frac{\Cl(a+\eps k)+\Cl(2\pi-a-\eps(k+1))}\eps}
=
\abs{\frac{\Cl(a+\eps k)-\Cl(a+\eps(k+1))}\eps}\le\log2.
\]
Additionally, $\log(a+(k+1)\eps)$ and $\log(2\pi-a-\eps k)$ both lie in $[\log\pi,\log(\pi+\eps)]$,
and so
\[ \left| \log \left(\frac{a+(k+1)\eps}{\pi} \right)\right|, \left| \log \left(\frac{2\pi-a-\eps k}{\pi} \right)\right| \le \frac{\eps}{\pi} . \]
\end{proof}

\begin{lemma}\label{a39}
Let $\epsilon >0$ and $(m+1) \epsilon < 2 \pi$. Then
\[
-\sum_{j=1}^m\log\pare{2\sin\frac{\eps j}2}
=
\frac{\Cl(\eps m)}\eps
+\frac12\log\eps
-\frac12\log\pare{\eps m\wedge(2\pi-\eps (m+1))} + O(1).
\]
\end{lemma}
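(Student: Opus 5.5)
Lemma \ref{a39} is the special case $a=0$ of the Riemann-sum comparisons in Lemmas \ref{a37} and \ref{a38}. The difficulty is that at $a=0$ the integrand $\log|2\sin(x/2)|$ has a logarithmic singularity at the left endpoint. I will therefore avoid applying Lemma \ref{a37} directly on $(0,\eps m)$ and instead start the comparison at $a=\eps$. Equivalently, I peel off the first term $j=1$, which is $-\log(2\sin\frac\eps2)=-\log\eps+O(1)$ because $\eps<2\pi$.

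\emph{Case $\eps m\le\pi$.} Apply Lemma \ref{a37} with $a=0$, $b=\eps m$, step $(b-a)/m=\eps$. The lemma is stated for $(a,b)\subset[0,\pi]$ and its proof never divides by $a$ itself; the only place $a$ enters is $\log\frac{a+(m+1)\eps}{a+\eps}$. With $a=0$ this gives
\[
-\sum_{j=1}^m\log\pare{2\sin\frac{\eps j}2}=\frac{\Cl(\eps m)}{\eps}-\frac12\log(m+1)+O(1).
\]
Since $-\frac12\log(m+1)=\frac12\log\eps-\frac12\log(\eps m)+O(1)$ and $\eps m\le\pi$ makes $\eps m\wedge(2\pi-\eps(m+1))=\eps m$ up to a constant factor, this is the claim. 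I should double-check the term $g(\eps)=\int_0^\eps f_1$ in the proof of Lemma \ref{a37} when $a=0$. It is still $O(\eps)$: $g'(\eps)=\frac\eps2\cot\frac\eps2\le1$, and $g(\eps)\to0$ because $\int_0^\eps\log\frac{\sin(x/2)}{\sin(\eps/2)}\,dx=O(\eps)$. So the argument goes through.

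\emph{Case $\eps m>\pi$.} Here I use Lemma \ref{a38} with $a=0$, $b=\eps(m+1)<2\pi$, so that $\frac{b-a}{m+1}=\eps$ and $\pi\in(a,b)$. The lemma gives
\[
-\sum_{j=1}^m\log\abs{2\sin\frac{\eps j}2}=\frac{\Cl(\eps(m+1))}{\eps}-\frac12\log\frac\pi\eps-\frac12\log\frac{\pi}{2\pi-\eps m}+O(1).
\]
Its proof splits at $k=\lfloor\pi/\eps\rfloor$ into two applications of Lemma \ref{a37}. The first piece has $a=0$, already handled above; the second has left endpoint $2\pi-\eps(m+1)>0$. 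So the lemma remains valid with $a=0$.

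Two corrections then remain.
\begin{enumerate}
\item Replace $\Cl(\eps(m+1))$ by $\Cl(\eps m)$. By Proposition \ref{a25}(6) near $2\pi$, or directly since $\Cl'(x)=f(x)=-\log(2\pi-x)+O(1)$ for $x$ near $2\pi$, the difference $\frac{\Cl(\eps(m+1))-\Cl(\eps m)}\eps$ equals $-\log(2\pi-\eps m)+O(1)$, up to the $\frac{\eps}{2\pi-x}$ error, which is $O(1)$.
\item Compare $-\frac12\log(2\pi-\eps m)$ with the target $-\frac12\log(2\pi-\eps(m+1))$. These can differ unboundedly when $2\pi-\eps(m+1)$ is tiny compared with $\eps$. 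That is exactly where the last sine term $\log(2\sin\frac{\eps m}{2})\approx\log(2\pi-\eps m)$ also matters, so this region needs care.
\end{enumerate}

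Collecting the terms, the $\log\eps$ contributions combine to $\frac12\log\eps$ and the endpoint contributions to $-\frac12\log(2\pi-\eps(m+1))$, modulo the bookkeeping above.

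\textbf{Main obstacle.} The hard step is the endpoint bookkeeping near $2\pi$ when $2\pi-\eps(m+1)\ll\eps$. There the $-\log$ of the last gap and the derivative of $\Cl$ are both large, and they must cancel to leave exactly $-\frac12\log(2\pi-\eps(m+1))$ with $O(1)$ error. My first attempt will be to symmetrize via $\Cl(2\pi-x)=-\Cl(x)$ (Proposition \ref{a25}(4)) and treat the sum near $2\pi$ as a Riemann sum started at $a'=2\pi-\eps(m+1)$ with step $\eps$. In that form the $\frac12\log\frac{a'+(k+1)\eps}{a'+\eps}$ term of Lemma \ref{a37} produces the required $\log(a'+\eps)$ behaviour automatically.
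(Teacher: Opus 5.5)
\textbf{The gap: the last step cannot be closed, because the statement is false in one corner.}

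Your decomposition is the same as the paper's. You use \Cref{a37} with $a=0$ when $\eps m\le\pi$, and \Cref{a38} with $a=0$, $b=\eps(m+1)$ plus the increment of $\Cl$ when $\eps m>\pi$. Everything through your correction (1) is sound. The formula in (1), $-\log(2\pi-\eps m)+O(1)$, is correct, but justify it directly: it is the average over $[\eps m,\eps(m+1)]$ of $f(x)=-\log(2\pi-x)+O(1)$. Do not use \Cref{a25}(6) at $x=\eps(m+1)$. That gives $-\log(2\pi-\eps(m+1))$ with error $\frac{\eps}{2(2\pi-\eps(m+1))}$, which is unbounded exactly in the regime at issue. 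Carried through, your argument proves, for all $(m+1)\eps<2\pi$,
\[
-\sum_{j=1}^m\log\pare{2\sin\frac{\eps j}2}=\frac{\Cl(\eps m)}\eps+\frac12\log\eps-\frac12\log\pare{\eps m\wedge(2\pi-\eps m)}+O(1).
\]
The gap is the step you defer to ``bookkeeping'': replacing $2\pi-\eps m$ by $2\pi-\eps(m+1)$. This cannot be done, because the statement as written is false when $2\pi-\eps(m+1)=o(\eps)$. Fix $m\ge1$ and let $\eps\uparrow2\pi/(m+1)$.
\begin{enumerate}
\item The left side tends to $-\log\prod_{j=1}^m2\sin\frac{\pi j}{m+1}=-\log(m+1)$, which is finite.
\item By \Cref{a25}(4), $\Cl(\eps m)/\eps\to-\frac{m+1}{2\pi}\Cl\pare{\frac{2\pi}{m+1}}$, also finite.
\item Meanwhile $-\frac12\log\pare{\eps m\wedge(2\pi-\eps(m+1))}\to+\infty$.
\end{enumerate}
For $m=1$ this limit is $\eps\uparrow\pi$, which lies in your Case 1. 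So your remark there that $\eps m\wedge(2\pi-\eps(m+1))$ is comparable to $\eps m$ holds only for $m\ge2$. Your proposed symmetrization cannot rescue this. Started at $a'=2\pi-\eps(m+1)$, \Cref{a37} produces $\log(a'+\eps)=\log(2\pi-\eps m)$, which is what you already have, not $\log a'$.

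\textbf{The paper's proof has the same blind spot.} Its increment estimate $-\log(2\pi-(m+1)\eps)+O(1)$ needs $2\pi-\eps(m+1)\gtrsim\eps$. So does its final merge of $-\log(2\pi-(m+1)\eps)+\frac12\log(2\pi-\eps m)$ into $-\frac12\log(2\pi-(m+1)\eps)$.

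\textbf{What to do instead.} Stop searching for a cancellation and state what is true. Either state the display above, which your argument already establishes. Or state the lemma as written under an added hypothesis such as $(m+2)\eps\le2\pi$. Then $2\pi-\eps(m+1)\le2\pi-\eps m\le2(2\pi-\eps(m+1))$, and the two forms agree up to an additive $\frac12\log2$. With no extra hypothesis, only the one-sided bound (left side) $\le$ (right side) $+O(1)$ survives. This follows because $\eps m\wedge(2\pi-\eps(m+1))\le\eps m\wedge(2\pi-\eps m)$.
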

\begin{proof}
    If $\eps m\le\pi$, then we may apply
\Cref{a37} to obtain
\spliteq{}{
-\sum_{j=1}^m\log\pare{2\sin\frac{\eps j}2}
  &=\frac{\Cl(\eps m)}\eps-\frac12\log\pare{m+1}+O(1)
\\&=\frac{\Cl(\eps m)}\eps-\frac12\log\pare{\eps m}+\frac12\log\eps+O(1).
}
If instead $\eps m>\pi$, we may apply \Cref{a38} along with the estimate
\[\frac{\Cl(\eps m+\eps)-\Cl(\eps m)}\eps = -\log(2\pi-(m+1)\eps )+O(1)\] in that regime to obtain
\spliteq{}{
-\sum_{j=1}^m\log\pare{2\sin\frac{\eps j}2}
  &=\frac{\Cl(\eps m+\eps)}\eps-\frac12\log\pare{\frac\pi\eps}-\frac12\log\pare{\frac\pi{2\pi-\eps m}}+O(1)
\\&=\frac{\Cl(\eps m)}\eps+\frac12\log\eps-\frac12\log\pare{2\pi-(m+1)\eps}+O(1)
.}
Combining these two inequalities gives the desired result.
\end{proof}

\begin{lemma}\label{a27}
Let $\eps >0$ and $(n+1)\eps < 2 \pi$. Then
\[U_0(0)=U_n(\eps n)=-\sum_{j=1}^n\log\abs{2\sin\frac{\eps j}2}
=\frac{\Cl(\eps n)}\eps+\frac12\log\eps-\frac12\log\pare{\eps n\wedge(2\pi-\eps (n+1))}+O(1)\]
and, for $1\le k\le n-1$,
\spliteq{}{
U_k(\eps k)
&=
\frac{\Cl(\eps k)+\Cl(\eps (n-k))}\eps
+\log\eps
\\&-\frac{\log\pare{\eps k\wedge(2\pi-\eps (k+1))}+\log\pare{\eps(n-k)\wedge(2\pi-\eps (n-k+1))}}2 + O(1).
}
\end{lemma}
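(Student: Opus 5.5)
The plan is to reduce everything to \Cref{a39}. The key structural fact is that the sum defining $U_k(\eps k)$ splits into the nodes to the left of $e^{ik\eps}$ and those to the right, and each part is a sum of the form treated there.

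By definition \cref{a23},
\[
U_k(\eps k)=\sum_{j\neq k}\log\frac1{\abs{e^{ik\eps}-e^{ij\eps}}}=-\sum_{j\ne k}\log\pare{2\sin\frac{\abs{j-k}\eps}2}.
\]
This uses $\abs{e^{ix}-e^{iy}}=2\abs{\sin\frac{x-y}2}$. We also need $\sin\frac{\abs{j-k}\eps}{2}>0$, which holds because $0<\abs{j-k}\eps\le n\eps<2\pi$.

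Reindexing with $m=\abs{j-k}$ gives
\[
U_k(\eps k)=-\sum_{m=1}^{k}\log\pare{2\sin\frac{\eps m}2}-\sum_{m=1}^{n-k}\log\pare{2\sin\frac{\eps m}2}.
\]
When $k=0$ or $k=n$, one of the two sums is empty and the other is $-\sum_{m=1}^n\log(2\sin\frac{\eps m}{2})$. This shows $U_0(0)=U_n(\eps n)$ equals the stated sum.

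Next we apply \Cref{a39} to each nonempty sum.
\begin{itemize}
\item For the full sum we take $m=n$. The hypothesis $(n+1)\eps<2\pi$ is exactly what \Cref{a39} requires, and it yields the first formula.
\item For $1\le k\le n-1$ we apply \Cref{a39} once with $m=k$ and once with $m=n-k$. Both are admissible because $(k+1)\eps\le n\eps<2\pi$, and likewise for $n-k$.
\end{itemize}
In the second case, each application contributes $\frac12\log\eps$, so together they give $\log\eps$. The two $\Cl$ terms combine to $\frac{\Cl(\eps k)+\Cl(\eps(n-k))}{\eps}$. The two $-\frac12\log(\cdot\wedge\cdot)$ terms appear exactly as in the statement, and the two $O(1)$ errors add to an $O(1)$ error.

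There is essentially no obstacle here. The only point to check is that the $O(1)$ in \Cref{a39} is uniform in $m$ and $\eps$, so that adding two such errors is still $O(1)$ uniformly in $k$. This uniformity follows from the explicit absolute constants in \Cref{a37,a38}, on which \Cref{a39} rests.
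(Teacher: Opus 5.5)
Your proposal is correct and is the paper's own argument: split $U_k(\eps k)$ into the two one-sided sums as in \cref{a40} and apply \Cref{a39} to each, with a single application when $k\in\{0,n\}$. One caveat concerns your closing remark, not the reduction: the $O(1)$ in \Cref{a39} is uniform only when $2\pi-(m+1)\eps$ is at least a constant multiple of $\eps$. Otherwise it is the final matching of logarithms in its proof and the estimate from \Cref{a25}(6) that break down, not the constants of \Cref{a37,a38}. Indeed, the first formula of the statement itself fails for $n=1$, $\eps\to\pi^-$: the left side tends to $-\log 2$ while the right side tends to $+\infty$. The case $1\le k\le n-1$ is unaffected, since there both applications have $2\pi-(m+1)\eps\ge 2\pi-n\eps>\eps$.
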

\begin{proof}
This is a direct application of \Cref{a39} for $k=0,n$ and, for other $k$, an application to each sum of
\eq{\label{a40}
U_k(\eps k)=-\sum_{j\neq k}
\log\abs{2\sin\pare{\frac{\eps j-\eps k}2}}
=
-\sum_{j=1}^{n-k}
\log\abs{2\sin\pare{\frac{\eps j}2}}
-
\sum_{j=1}^k
\log\abs{2\sin\pare{\frac{\eps j}2}}.}
\end{proof}

\begin{lemma}\label{a28}
Let $\eps >0$ and $(n+2) \eps < 2 \pi$. Then
\[U\left( \frac{\eps n}{2} + \pi \right) = \frac{2 \Cl\left( \pi + \frac{\eps (n+2)}{2}\right)}{\eps} + \log \left( 2 \pi - \eps n\right) + O(1)\]
and
\[U \left(\eps n + \eps\right) = \frac{\Cl(\eps (n+1))}{\eps} + \frac{1}{2} \log \eps - \frac{1}{2} \log \left( \eps (n+1) \wedge \left( 2 \pi - \eps (n+2) \right) \right) +O(1). \]
\end{lemma}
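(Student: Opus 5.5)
We prove \Cref{a28} by reducing both evaluations to the Riemann-sum estimates of \Cref{a37}, \Cref{a38} and \Cref{a39}. Write $U(\theta) = -\sum_{j=0}^n \log\abs{2\sin\frac{\theta - j\eps}{2}}$, using $\abs{e^{i\theta}-e^{ij\eps}} = \abs{2\sin\frac{\theta-j\eps}2}$.

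\emph{The second identity.} Take $\theta = \eps(n+1)$. The arguments are $\theta - j\eps = \eps(n+1-j)$ for $j=0,\ldots,n$, which run over $\eps,2\eps,\ldots,(n+1)\eps$. Hence
\[U(\eps n+\eps) = -\sum_{j=1}^{n+1}\log\pare{2\sin\frac{\eps j}2}.\]
This is exactly the sum in \Cref{a39} with $m=n+1$. The hypothesis $(m+1)\eps=(n+2)\eps<2\pi$ is the one assumed, so \Cref{a39} gives
\[U(\eps n+\eps) = \frac{\Cl(\eps(n+1))}{\eps} + \frac12\log\eps - \frac12\log\pare{\eps(n+1)\wedge(2\pi-\eps(n+2))} + O(1).\]

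\emph{The first identity.} Take $\theta = \pi + \eps n/2$. The half-arguments are $\frac{\pi}{2} + \frac{(n/2-j)\eps}{2}$, and $\abs{\sin}$ of these is $\cos\frac{(n/2-j)\eps}{2}$, which is symmetric about the midpoint $j = n/2$. Equivalently, the points $\theta - j\eps$ are $\pi + (n/2-j)\eps$, an equispaced grid with spacing $\eps$ symmetric about $\pi$. We then apply \Cref{a38}.
\begin{enumerate}
\item Rewrite the sum over $j$ as $\sum_{j'=1}^{m}\log\abs{2\sin\frac{a+\eps j'}{2}}$ with $m=n+1$ terms, $a = \pi - \eps(n+2)/2$ and $b = \pi+\eps(n+2)/2$. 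Then $(b-a)/(m+1)=\eps$ and $\pi\in(a,b)$, and the requirement $b<2\pi$ is $(n+2)\eps<2\pi$.
\item \Cref{a38} gives $-\sum = \frac{\Cl(b)-\Cl(a)}{\eps} - \frac12\log\frac{\pi}{a+\eps} - \frac12\log\frac{\pi}{2\pi - b + \eps} + O(1)$.
\item By \Cref{a25}(4), $\Cl(a) = -\Cl(2\pi - a) = -\Cl(b)$, so $\Cl(b)-\Cl(a) = 2\Cl\pare{\pi+\frac{\eps(n+2)}2}$.
\item Both $a+\eps$ and $2\pi-b+\eps$ equal $\pi - \eps n/2 = \frac12(2\pi-\eps n)$. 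The two log terms therefore combine to $\log(2\pi - \eps n) + O(1)$.
\end{enumerate}
This gives
\[U\pare{\frac{\eps n}{2}+\pi} = \frac{2\Cl\pare{\pi+\frac{\eps(n+2)}2}}{\eps} + \log(2\pi-\eps n) + O(1).\]

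The only step needing care is matching the endpoints in \Cref{a38}: the normalization $\frac{b-a}{m+1}$ forces $a$ and $b$ to sit one half-step beyond the outermost nodes. That is why the shift $n+2$ appears inside $\Cl$, and it also makes the log terms collapse to $\log(2\pi-\eps n)$. The small case $m=1$ is covered by \Cref{a38} as stated.
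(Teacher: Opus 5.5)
Your proposal is correct and is essentially the paper's own proof. You reindex $U(\eps n/2+\pi)$ as the sum in \Cref{a38} with $a=\pi-\eps(n+2)/2$, $b=\pi+\eps(n+2)/2$, $m=n+1$, and $U(\eps n+\eps)$ as the sum in \Cref{a39} with $m=n+1$, exactly as the paper does, and your step $\Cl(a)=-\Cl(2\pi-a)=-\Cl(b)$ correctly applies the antisymmetry from \Cref{a25}(4) (the paper's text writes this reflection without the minus sign, a typo).
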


\begin{proof}
This is a direct application of \Cref{a38} for $U\left( \frac{\eps n}{2} + \pi \right)$ and \Cref{a39} for $U \left(\eps n + \eps\right)$. For $U\left( \frac{\eps n}{2} + \pi \right)$, we have
\begin{align*}
U\left( \frac{\eps n}{2} + \pi \right) &= - \sum_{j=0}^n \log \left| 2 \sin \left( \frac{\frac{\eps n}{2} + \pi - j \eps}{2} \right)\right| \\
&= - \sum_{k=1}^{n+1} \log \left| 2 \sin \left( \frac{\pi -\frac{\eps n}{2}-\eps + k \eps}{2} \right)\right| \\
&= \frac{\Cl(\pi+\frac{\eps(n+2)}2)-\Cl(\pi-\frac{\eps(n+2)}2)}\eps
-\log\pare{\frac{\pi}{\pi-\frac{\eps n}2}}+O(1).
\end{align*}
By the symmetry $\Cl(\theta) = \Cl(2 \pi - \theta)$, we obtain our desired result.

For $U \left(\eps n + \eps\right)$, we have
\begin{align*}
U(\eps n+\eps)
&=- \sum_{j=0}^n \log \left| 2 \sin \left( \frac{\eps n + \eps - j \eps}{2} \right)\right| 
\\&=-\sum_{k=1}^{n+1} \log \left| 2 \sin \left( \frac{k\eps}{2} \right)\right|
\\&=\frac{\Cl(\eps (n+1))}{\eps} + \frac{1}{2} \log \eps - \frac{1}{2} \log \left( \eps (n+1) \wedge \left( 2 \pi - \eps (n+2) \right) \right) +O(1).
\end{align*}
\end{proof}

\subsection{Quadratic upper and lower bounds for $U(\cdot)$}\label{a29}
\newcommand{\mwide}{M_{\textnormal w}}
\newcommand{\mnarrow}{M_{\textnormal n}}
Recall our motivation for defining $U(\cdot)$ and $U_k(\cdot)$ was because we can express $\abs{L_k(e^{i\theta})}$ as the exponential
\(\abs{L_k(e^{i\theta})}=e^{U_k(\eps k)}e^{-U_k(\theta)}\), which we need to integrate the square of. Our idea is now to estimate $U(\theta)$ by a parabola, so that integrating the exponential resembles a Gaussian integral,
\[\int_{-b}^ae^{C-Mx^2}\wrt x=e^C\sqrt{\frac\pi M}\cdot\frac{\erf(a\sqrt M)+\erf(b\sqrt M)}2\]
We will use the symbols $M_{\text{wide}}=\mwide$, $M_{\text{narrow}}=\mnarrow$ to denote ``wide'' and ``narrow'' estimates for the convexity of $U(\theta)$.

\begin{lemma}\label{a30}
Let $\eps >0$, $(n+2)\eps < 2 \pi$,
\spliteq{}{
\mwide=U''\pare{\frac{\eps n}2+\pi}
,\quad
\mnarrow=2\frac{U\pare{\eps n+\eps}-U\pare{\frac{\eps n}2+\pi}}{\pare{\eps+\frac{\eps n}2-\pi}^2},
}
and define the two parabolas
\spliteq{}{
g_\star(x)&=\frac{M_\star}2\pare{x-\frac{\eps n}2-\pi}^2+U\pare{\frac{\eps n}2+\pi}}
for $\star\in\set{\textnormal w,\textnormal n}$. The polynomial $g_{\textnormal w}(\cdot)$ is the quadratic Taylor approximation of $U(\cdot)$ centered at $\frac{\eps n}2+\pi$ and $g_{\textnormal n}(\cdot)$ is the parabola intersecting $U(\cdot)$ at $\eps n+\eps$,$\frac{\eps n}2+\pi$, and $2\pi-\eps$.
Then \(g_{\textnormal w}(x)\le U(x)\) on $x\in\pare{\eps n,2\pi}$ and \(g_{\textnormal n}(x)\ge U(x)\) on $x\in\pare{\eps n+\eps,2\pi-\eps}$.
\end{lemma}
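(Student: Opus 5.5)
The plan is to exploit the symmetry of $U$ about the center $c:=\frac{\eps n}2+\pi$, together with the positivity of all even derivatives of $f$ from \Cref{a25}(2), by expanding $U(c+t)$ as a power series in $t^2$ with nonnegative coefficients.

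\textbf{Representation.} On $(\eps n,2\pi)$ we have
\[
U(\theta)=\sum_{j=0}^n f(\theta-j\eps),
\]
and every argument $\theta-j\eps$ lies in $(0,2\pi)$, so $U$ is real-analytic there.

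\textbf{Symmetry.} I will check that $U(2c-\theta)=U(\theta)$. The substitution $j\mapsto n-j$ gives
\[
2c-\theta-j\eps = 2\pi-\bigl(\theta-(n-j)\eps\bigr),
\]
and $f(2\pi-x)=f(x)$. Hence $t\mapsto U(c+t)$ is even, so all odd derivatives of $U$ vanish at $c$.

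\textbf{Series with nonnegative coefficients.} The singularities of the summands closest to $c$ are at $\eps n$ and at $2\pi$ (from $j=0$, modulo $2\pi$). Both are at distance $R:=\pi-\frac{\eps n}2$ from $c$, and all other singularities $j\eps$ and $j\eps+2\pi$ are at least as far away. Each $f(\cdot-j\eps)$ is analytic on the complex disk of radius $R$ about $c$, so for $|t|<R$ the Taylor series converges:
\[
U(c+t)=U(c)+\sum_{\ell\ge1}\frac{U^{(2\ell)}(c)}{(2\ell)!}\,t^{2\ell}.
\]
By \Cref{a25}(2), $U^{(2\ell)}(c)=\sum_j f^{(2\ell)}(c-j\eps)>0$ for every $\ell\ge1$.

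\textbf{Lower bound $g_{\textnormal w}\le U$.} Dropping all terms with $\ell\ge2$, which are nonnegative, gives $U(c+t)\ge U(c)+\frac{U''(c)}2t^2=g_{\textnormal w}(c+t)$ for $|t|<R$, that is, on all of $(\eps n,2\pi)$.

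\textbf{Upper bound $g_{\textnormal n}\ge U$.} Set $s=t^2$ and
\[
h(s):=\frac{U(c+\sqrt s)-U(c)}{s}=\sum_{\ell\ge1}\frac{U^{(2\ell)}(c)}{(2\ell)!}\,s^{\ell-1}.
\]
This is a power series in $s$ with nonnegative coefficients, so $h$ is nondecreasing on $[0,R^2)$. Put $T:=\pi-\frac{\eps n}2-\eps<R$, so that $c-T=\eps n+\eps$ and $c+T=2\pi-\eps$. By definition of $\mnarrow$ we have $h(T^2)=\mnarrow/2$, since $(\eps+\frac{\eps n}2-\pi)^2=T^2$. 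Then for $|t|\le T$,
\[
U(c+t)=U(c)+t^2h(t^2)\le U(c)+t^2h(T^2)=g_{\textnormal n}(c+t).
\]
Equality holds at $t=0$ and $t=\pm T$, which confirms that $g_{\textnormal n}$ is the parabola through the three stated points; the point $2\pi-\eps$ follows by symmetry.

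\textbf{Main obstacle.} The only delicate step is justifying the power series on the full interval. The radius of convergence about $c$ must be exactly $\pi-\frac{\eps n}2$, so one has to check that no other singularity $j\eps$ or $j\eps+2\pi$ lies closer to $c$ than the two endpoints. This is what lets the inequalities extend up to, though not including, $\eps n$ and $2\pi$.
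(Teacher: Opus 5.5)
Your proof is correct and follows the paper's argument: expand $U$ in its Taylor series about $c=\frac{\eps n}2+\pi$, use the symmetry to kill the odd terms and \Cref{a25}(2) to make the even coefficients positive, truncate to get $g_{\textnormal w}\le U$, and compare higher even powers with the quadratic term to get $U\le g_{\textnormal n}$. Your monotonicity of $h(s)$ repackages the paper's termwise bound $\bigl((x-c)/T\bigr)^{2\ell}\le\bigl((x-c)/T\bigr)^{2}$ for $|x-c|\le T$, and your explicit checks of the symmetry and of the radius of convergence fill in details that the paper only sketches.
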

\begin{proof}
Consider the Taylor series for $U(\cdot)$ centered at $\frac{\eps n}2+\pi$. $U(\cdot)$ is the finite sum of analytic functions so is itself analytic. By examining the locations of its poles, we see that the Taylor series converges on $(\eps n,2\pi)$. By the symmetry $U(x)=U(\eps n-x)$, all the odd terms vanish, and so we have
\eq{\label{a41}
U(x)=\sum_{\ell\ge 0}\frac{U^{(2\ell)}\pare{\frac{\eps n}2+\pi}}{(2\ell)!}\pare{x-\frac{\eps n}2-\pi}^{2\ell}.}
Note that \(f^{(2\ell)}(x)>0\) for all $\ell>0$, so all of the non-constant coefficients of \cref{a41} are positive for $x \in (\epsilon n, 2 \pi)$. Consequently, $U(x)$ is lower bounded by the quadratic Taylor polynomial, which is exactly $g_{\textnormal w}(x)$. On the other hand, for $x\in(\eps n+\eps,2\pi-\eps)$ we have
\spliteq{}{
U(x)
  &=U\pare{\frac{\eps n}2+\pi}+\sum_{\ell\ge 1}\frac{U^{(2\ell)}\pare{\frac{\eps n}2+\pi}}{(2\ell)!}
\pare{\frac{x-\frac{\eps n}2-\pi}{\eps+\frac{\eps n}2-\pi}}^{2\ell}
\pare{\eps+\frac{\eps n}2-\pi}^{2\ell}
\\&\le U\pare{\frac{\eps n}2+\pi}+\pare{\frac{x-\frac{\eps n}2-\pi}{\eps+\frac{\eps n}2-\pi}}^2\sum_{\ell\ge 1}\frac{U^{(2\ell)}\pare{\frac{\eps n}2+\pi}}{(2\ell)!}\pare{\eps+\frac{\eps n}2-\pi}^{2\ell}
\\&=   U\pare{\frac{\eps n}2+\pi}+\pare{\frac{x-\frac{\eps n}2-\pi}{\eps+\frac{\eps n}2-\pi}}^2\pare{U(\eps n+\eps)-U\pare{\frac{\eps n}2+\pi}}}
which is exactly $g_{\textnormal n}(x)$.\end{proof}

\begin{lemma}\label{a42}
Let $\eps >0$ and $n \eps < 2 \pi$. Then
\[
\frac14+\frac1\eps\tan\frac{\eps n}4
\le\mwide\le
\frac14+\frac1\eps\tan\frac{\eps n}4+\frac12\tan^2\frac{\eps n}4.
\]
\end{lemma}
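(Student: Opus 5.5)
We compute $\mwide$ exactly as a finite sum and then compare that sum with an integral, using the trapezoid rule together with convexity and monotonicity of $\sec^2$.

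\textbf{Reducing $\mwide$ to a sum of $\sec^2$ terms.} By \cref{a23}, $U(\theta)=\sum_{j=0}^n f(\theta-j\eps)$ with $f(x)=-\log\abs{2\sin\frac x2}$. Differentiating directly gives $f'(x)=-\frac12\cot\frac x2$ and $f''(x)=\frac14\csc^2\frac x2$, so that $f''>0$, consistent with item (2) of \Cref{a25}. Hence
\[
U''(\theta)=\frac14\sum_{j=0}^n\csc^2\frac{\theta-j\eps}{2}.
\]
Evaluating at $\theta=\frac{\eps n}2+\pi$, each argument becomes $\frac{\pi}{2}+\frac{(n-2j)\eps}{4}$. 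Therefore
\[
\mwide=\frac14\sum_{j=0}^n \sec^2 t_j,\qquad t_j=\frac{(n-2j)\eps}{4}.
\]
The nodes $t_j$ are equally spaced with step $h=\eps/2$, run symmetrically from $-a$ to $a$ with $a=\frac{n\eps}4<\frac\pi2$, and so all lie where $\sec^2$ is finite. The integral of interest is $\int_{-a}^a\sec^2=2\tan a$, and $\frac{1}{4h}\cdot 2\tan a=\frac1\eps\tan\frac{\eps n}4$, which is the main term in both bounds.

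\textbf{Lower bound.} Since $\sec^2$ is convex, the composite trapezoid rule on the $n$ cells $[t_{j+1},t_j]$ overestimates the integral:
\[
h\sum_{j=0}^n\sec^2 t_j-\frac h2\pare{\sec^2 a+\sec^2(-a)}\ge 2\tan a.
\]
Multiplying by $\frac1{4h}$ gives
\[
\mwide\ge\frac1\eps\tan a+\frac14\sec^2 a.
\]
Since $\sec^2 a=1+\tan^2 a\ge1$, this yields $\mwide\ge\frac14+\frac1\eps\tan\frac{\eps n}4$.

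\textbf{Upper bound.} Here a curvature-based trapezoid error (of order $h^2 f'$) is too lossy near $a\approx\pi/2$. Instead I will use only monotonicity. On any cell where $f$ is monotone, the integral is at least $h\min f$, so trapezoid minus integral is at most $\frac h2\abs{f(t+h)-f(t)}$.
\begin{itemize}
\item On $[0,a]$ the function $\sec^2$ is increasing, so these errors telescope to at most $\frac h2(\sec^2a-1)=\frac h2\tan^2a$. The same holds on $[-a,0]$ by symmetry.
\item When $n$ is odd, the middle cell $[-h/2,h/2]$ straddles $0$. There $f$ is even and increasing in $\abs t$, so trapezoid minus integral is at most $h(f(h/2)-f(0))$. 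This is exactly the contribution of its two monotone halves, so the telescoping bound is unchanged.
\end{itemize}
Summing the cells, the total trapezoid error is at most $h\tan^2 a$. Therefore
\[
h\sum_j\sec^2t_j\le 2\tan a+h\sec^2a+h\tan^2a=2\tan a+h+2h\tan^2a.
\]
Dividing by $4h$ gives
\[
\mwide\le\frac1\eps\tan\frac{\eps n}4+\frac14+\frac12\tan^2\frac{\eps n}4,
\]
which is the claimed upper bound.

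The main obstacle is this upper bound, specifically getting the constant $\frac12$ on $\tan^2$ without any error term that blows up faster than $\tan^2 a$ as $a\to\pi/2$. The monotone-cell telescoping does this. The only point needing care is the central cell when $n$ is odd, which is handled above by evenness of $\sec^2$.
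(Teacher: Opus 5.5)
Your proof is correct and follows essentially the paper's route. Both arguments write $M_{\textnormal w}=\frac14\sum_{j=0}^n\sec^2 t_j$ and compare this sum with $\int_{-a}^{a}\sec^2 t\,\mathrm{d}t=2\tan a$. For the upper bound, your monotone-cell telescoping is exactly the paper's lower Riemann sum $\frac{\varepsilon}{2}\sum_{j=1}^{n-1}\sec^2 t_j+\frac{\varepsilon}{2}\sec^2 0$, rewritten in trapezoid form. The only real difference is the lower bound: you use convexity (the trapezoid rule overestimates) rather than the paper's monotone upper Riemann sum, which yields the marginally sharper $M_{\textnormal w}\ge\frac{1}{\varepsilon}\tan a+\frac14\sec^2 a$.
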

\begin{proof}
The derivative of $\tan(\cdot)$ is $\sec^2(\cdot)$ so
\[
2\tan\pare{\frac{\eps n}4}
=\int_{-\eps n/4}^{\eps n/4}\sec^2\pare{t}\wrt t.\]
The integral can be upper and lower bounded by the Riemann sums
\[
2\tan\pare{\frac{\eps n}4}
\le
\frac\eps2\sum_{j=0}^n
\sec^2\pare{\frac{\eps n}4-\frac{\eps j}2}
-\frac\eps2\sec^2\pare{\frac{\eps n}4-\frac{\eps\floor{n/2}}2}
\]
and
\[
2\tan\pare{\frac{\eps n}4}
\ge
\frac\eps2\sum_{j=1}^{n-1}
\sec^2\pare{\frac{\eps n}4-\frac{\eps j}2}
+\frac\eps2\sec^2\pare{0}
\]
Now note that
\[
\mwide=U''\pare{\frac{\eps n}2+\pi}
=\sum_{j=0}^nf''\pare{\frac{\eps n}2+\pi-\eps j}
=\frac14\sum_{j=0}^n\sec^2\pare{\frac{\eps n}4-\frac{\eps j}2}.\]
Plugging this in gives\spliteq{}{
2\tan\pare{\frac{\eps n}4}
&\le2\eps U''\pare{\frac{\eps n}2+\pi}
-\frac\eps2\sec^2\pare{\frac{\eps n}4-\frac{\eps\floor{n/2}}2}
\\&\le2\eps U''\pare{\frac{\eps n}2+\pi}-\frac\eps2.
}
and
\spliteq{}{
2\tan\pare{\frac{\eps n}4}
&\ge2\eps U''\pare{\frac{\eps n}2+\pi}
-\frac\eps2\sec^2\pare{-\frac{\eps n}4}
-\frac\eps2\sec^2\pare{\frac{\eps n}4}
+\frac\eps2\sec^2\pare{0}
\\&=2\eps U''\pare{\frac{\eps n}2+\pi}
-\eps\sec^2\pare{\frac{\eps n}4}
 +\frac\eps2.}
Dividing by $2\eps$ and using $1-\sec^2(x)=-\tan^2(x)$ gives the final result
\end{proof}

\begin{lemma}\label{a31}
Let $\eps >0$, $n \eps < 2 \pi - 2 \epsilon^{1/2}$, and
$\alpha:=\frac{\eps n}{2\pi}$.
There exist constants $C_1,C_2$ such that
\[
C_1\log\frac2\alpha
\le\frac{M_{\textnormal n}}{M_{\textnormal w}}\le
C_2\log\frac2\alpha.\]
\end{lemma}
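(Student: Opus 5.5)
We estimate the numerator and denominator of $\mnarrow/\mwide$ separately, using \Cref{a28} for the values of $U$ and \Cref{a42} for $\mwide$. Write $h:=\eps+\frac{\eps n}2-\pi$, so $h^2=(\pi-\frac{\eps n}2-\eps)^2\asymp(1-\alpha)^2$ (up to the $O(\eps)$ shift, which is harmless since $n\eps<2\pi-2\eps^{1/2}$ forces $1-\alpha\gtrsim\eps^{1/2}\gg\eps$).

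\emph{Denominator.} By \Cref{a42}, $\mwide=\frac1\eps\tan\frac{\pi\alpha}{2}+O(1+\tan^2\frac{\pi\alpha}2)$. Since $\tan\frac{\pi\alpha}{2}\asymp \alpha/(1-\alpha)$, and the hypothesis gives $\tan^2\frac{\pi\alpha}2\lesssim \frac{1}{\eps}\tan\frac{\pi\alpha}2$ once $1-\alpha\gtrsim\eps$, we get
\[\mwide\asymp \frac{\alpha}{\eps(1-\alpha)}=\frac{n}{2\pi(1-\alpha)}.\]

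\emph{Numerator.} By \Cref{a28},
\[U(\eps n+\eps)-U\pare{\tfrac{\eps n}2+\pi}=\frac{\Cl(\eps(n+1))-2\Cl(\pi+\frac{\eps(n+2)}2)}{\eps}+O\pare{\log\tfrac{1}{\eps}}.\]
Use \Cref{a25}(5) with $x=\eps(n+2)$: $\Cl(x)-2\Cl(\pi+x/2)=2\Cl(x/2)$. Replacing $\eps(n+1)$ by $\eps(n+2)$ costs $|\Cl(x)-\Cl(x-\eps)|/\eps=O(\log\frac1{\eps}+\log\frac{1}{1-\alpha})$ by \Cref{a25}(6). The main term is therefore $\frac{2}{\eps}\Cl(\pi\alpha')$ with $\alpha'=\frac{\eps(n+2)}{2\pi}$. \Cref{a25}(7) gives $\Cl(\pi\alpha')\asymp\alpha'(1-\alpha')\log\frac2{\alpha'}$, so the main term is
\[\asymp\frac{\alpha(1-\alpha)}{\eps}\log\frac2\alpha\asymp n(1-\alpha)\log\frac2\alpha .\]
Then $\mnarrow\asymp \frac{n}{1-\alpha}\log\frac2\alpha$, and dividing by $\mwide$ gives the claim.

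\emph{Main obstacle.} The difficulty is showing the $O(\log\frac1\eps)$ error terms are dominated by the main term $n(1-\alpha)\log\frac2\alpha$, uniformly in $\alpha$.
\begin{itemize}
\item For $\alpha$ near $1$: the hypothesis $n\eps<2\pi-2\eps^{1/2}$ gives $1-\alpha\gtrsim\eps^{1/2}$, so $n(1-\alpha)\gtrsim\eps^{-1/2}\gg\log\frac1\eps$. The $\log\frac1{1-\alpha}$ terms are likewise $O(\log\frac1\eps)$.
\item For $\alpha$ small, i.e.\ $n$ bounded: the asymptotic argument degrades. Here I would instead compute $U(\eps n+\eps)-U(\frac{\eps n}2+\pi)$ directly from its definition as a finite sum. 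Each term is $\log\frac{\sin(\cdot)}{\sin(\cdot)}$ with sine ratios of size $\approx\frac{2}{j\eps}$, so the sum is $\asymp n\log\frac1{n\eps}\asymp n\log\frac2\alpha$, matching the main-term scaling.
\item For small $n$ the $\frac14$ in $\mwide$ also matters. Since $\frac1\eps\tan\frac{\eps n}4\ge\frac n4$, it is absorbed into the constants.
\end{itemize}
Splitting into these regimes ($\alpha\le\alpha_0$, the middle range, and $1-\alpha\le\alpha_0$), each with explicit constants, should yield $C_1$ and $C_2$.
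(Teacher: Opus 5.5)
Your proposal is correct and follows the paper's route. Like the paper, you get $M_{\textnormal w}\asymp \alpha/(\eps(1-\alpha))$ from \Cref{a42} and the two-sided bound on $\tan$. You get the numerator of $M_{\textnormal n}$ from \Cref{a28} combined with parts (5) and (7) of \Cref{a25}, and you use the gap $1-\alpha\ge\eps^{1/2}/\pi$ to absorb the logarithmic errors.

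Your extra regime split is more careful than the paper. The paper's relative error term $O(\eps\log(\cdot))/(\alpha(1-\alpha)\log\frac2\alpha)$ is only $O(1)$, not small, when $n$ is bounded, so it does not by itself control the lower constant $C_1$ there. Two small points: it is your direct finite-sum computation that covers all small $\alpha$, not only bounded $n$ (you wrote ``$\alpha$ small, i.e.\ $n$ bounded''); and, like the paper, you tacitly assume $\eps$ is small.
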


\begin{proof}
By \Cref{a28} and \Cref{a25},
\begin{align*}
    \eps \mnarrow &= \frac{2 \eps}{\pi^2(1-\alpha - \eps/\pi)^2} \bigg( \frac{\Cl(\eps(n+1)) - 2 \Cl(\pi + \frac{\eps(n+2)}{2})}{\eps} \\
    &\qquad + \frac{1}{2} \log \left( \frac{\eps}{((\eps(n+1))\wedge (2 \pi - \eps(n+2)))(2 \pi - \eps n)^2} \right) +O(1) \bigg) \\
    &= \frac{4 \Cl(\eps n /2)}{\pi^2 (1-\alpha - \eps/\pi)^2} \left( 1 + \frac{O(\eps \log (\eps \alpha (1 - \alpha - \eps/\pi))}{2 \Cl(\eps n /2)} \right).
\end{align*}
By the bound 
\[\frac{2}{\pi}\frac{x}{1-x} \le \tan \left(\frac{\pi}{2} x \right) \le \frac{\pi}{2} \frac{x}{1-x} \]
for $x \in (0,1)$ and \Cref{a42},
\[\eps \mwide = C \frac{\alpha}{1-\alpha}\]
for some constant $C \ge 2/ \pi$. In addition, by \Cref{a25}, $\Cl(\eps n/2) = \hat C \alpha(1-\alpha) \log (2/\alpha)$ for some $\hat C \in [1/\pi,2/5]$. Combining these estimates and noting that $1- \alpha \ge \eps^{1/2}/\pi$, we obtain our desired result
\begin{align*}
\frac{\mnarrow}{\mwide} = \frac{4 \hat C }{\pi^2 C} \log \left( \frac{2}{\alpha} \right) \left( \frac{1-\alpha}{1-\alpha - \eps/\pi}\right)^2 \left(1 + \frac{O(\eps \log (\eps \alpha (1 - \alpha - \eps/\pi))}{\alpha(1-\alpha) \log (2 / \alpha)} \right).
\end{align*}
\end{proof}

\subsection{Integrating a Lagrange polynomial with equispaced points}\label{a32}

Here we produce tight estimates for integrals and sums of integrals of $\abs{L_k\big(z;\{e^{ij \eps}\}_{j=0}^n\big)}^2$. Throughout \Cref{a32}, let $\eps >0$, $\alpha:=\frac{\eps n}{2\pi}$, and $\beta:=\frac{\eps k}{2\pi}$, and assume throughout that $\alpha<1-\frac1\pi\sqrt\eps$. Note that $0<\beta<\alpha<1$.

\begin{lemma}\label{a43}
Let $c_{a,b}=\sup_{x\in(a,b)}\abs{\frac12\csc\frac{x-\eps k}2}$. Then
\[
\int_a^be^{-2U_k(x)}\wrt x
\le
c_{a,b}^2
e^{-2U\pare{{\eps n}/2+\pi}}
\sqrt{\frac{\pi^2\eps}2\cdot\frac{1-\alpha}\alpha}
\]
for any interval $(a,b)\subset(\eps n,2\pi)$, and 
\[
\int_a^be^{-2U_k(x)}\wrt x
\ge
e^{-2U\pare{{\eps n}/2+\pi}}
\sqrt{\frac\eps{C\log\frac2\alpha}\cdot\frac{1-\alpha}\alpha}
\]
for any interval $(a,b)=(\eps n+\eps,2\pi-\eps)$, for some absolute constant $C$.
\end{lemma}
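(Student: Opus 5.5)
Both bounds start from the identity $U_k(x)=U(x)+\log\abs{e^{ix}-e^{ik\eps}}=U(x)+\log\abs{2\sin\frac{x-\eps k}{2}}$, which gives
\[
e^{-2U_k(x)}=\frac{e^{-2U(x)}}{4\sin^2\frac{x-\eps k}{2}}.
\]
Each bound is then handled in two stages. First the factor $\frac{1}{4}\csc^2\frac{x-\eps k}{2}$ is controlled: from above by $c_{a,b}^2$, and from below by a constant. Then $e^{-2U(x)}$ is sandwiched between Gaussians using the parabolas of \Cref{a30}.

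\textbf{Upper bound.} For $(a,b)\subset(\eps n,2\pi)$, \Cref{a30} gives $U\ge g_{\textnormal w}$ there. Hence
\[
\int_a^b e^{-2U_k}\le c_{a,b}^2\,e^{-2U(\eps n/2+\pi)}\int_{\mathbb R}e^{-\mwide(x-\eps n/2-\pi)^2}\wrt x=c_{a,b}^2\,e^{-2U(\eps n/2+\pi)}\sqrt{\pi/\mwide}.
\]
By \Cref{a42}, $\mwide\ge\frac1\eps\tan\frac{\eps n}{4}=\frac1\eps\tan\frac{\pi\alpha}{2}$. Using $\tan(\frac\pi2\alpha)\ge\frac2\pi\frac{\alpha}{1-\alpha}$, this gives $\pi/\mwide\le\frac{\pi^2\eps}{2}\frac{1-\alpha}{\alpha}$, which is exactly the stated constant.

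\textbf{Lower bound.} On $(\eps n+\eps,2\pi-\eps)$, note that $\abs{2\sin\frac{x-\eps k}{2}}\le 2$. So $e^{-2U_k}\ge\frac14 e^{-2U}\ge\frac14 e^{-2g_{\textnormal n}}$, the second step by \Cref{a30}. The constant $\frac14$ can be absorbed into $C$. The resulting Gaussian integral is
\[
e^{-2U(\eps n/2+\pi)}\int_{-h}^{h}e^{-\mnarrow y^2}\wrt y=e^{-2U(\eps n/2+\pi)}\sqrt{\pi/\mnarrow}\,\erf(h\sqrt{\mnarrow}),
\qquad h=\pi-\eps-\tfrac{\eps n}{2}.
\]
Two estimates remain.

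For the width, \Cref{a31} and \Cref{a42} give $\mnarrow\le C_2\log\frac2\alpha\,\mwide$. Also $\mwide\lesssim\frac1\eps\frac{\alpha}{1-\alpha}$, after checking that the terms $\frac14+\frac12\tan^2$ are dominated. This is the place where the hypothesis $\alpha<1-\frac1\pi\sqrt\eps$ is used: it gives $\tan^2(\pi\alpha/2)\lesssim(1-\alpha)^{-2}\le\pi^2/\eps$, so the $\tan^2$ term is within a constant of the $\frac1\eps\tan$ term. Together these yield $\sqrt{\pi/\mnarrow}\gtrsim\sqrt{\frac{\eps}{\log(2/\alpha)}\frac{1-\alpha}{\alpha}}$.

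For the erf factor, I need $h\sqrt{\mnarrow}\gtrsim1$, so that $\erf$ is bounded below by a constant. By definition, $\mnarrow h^2=2\bigl(U(\eps n+\eps)-U(\frac{\eps n}2+\pi)\bigr)$. This difference is the drop of $U$ from the point one step beyond the last node to the center of the gap. Via \Cref{a28} it equals roughly $\frac{\Cl(\eps(n+1))-2\Cl(\pi+\eps(n+2)/2)}{\eps}\approx\frac{2\Cl(\eps n/2)}{\eps}$ up to logarithmic corrections, which is large.

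\textbf{Main obstacle.} The delicate step is the erf factor when $n$ is small or $\alpha$ is near the allowed endpoints, where the logarithmic error terms of \Cref{a28} might compete with $\Cl(\eps n/2)/\eps$. My first attempt is a cruder argument. Since $U$ is even about the center and convex (its even Taylor coefficients are positive), $U(\eps n+\eps)-U(\frac{\eps n}{2}+\pi)\ge\frac{\mwide}{2}h^2$. It then suffices to show $\mwide h^2\gtrsim1$, which follows from $\mwide\ge\frac14+\frac1\eps\tan\frac{\pi\alpha}{2}$ and $h\gtrsim\pi(1-\alpha)$, possibly with a separate check when $h$ is tiny. That check is again handled by the constraint $1-\alpha\ge\sqrt\eps/\pi$.
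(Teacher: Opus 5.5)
Your proposal is correct and follows the paper's proof: the same factorization $e^{-2U_k(x)}=\frac14\csc^2\frac{x-\eps k}{2}\,e^{-2U(x)}$, the same parabola sandwich from \Cref{a30}, and the same use of \Cref{a42,a31}, with the upper bound reproduced exactly. The only variation is in the $\erf$ factor, where you show $h\sqrt{M_{\textnormal n}}\ge h\sqrt{M_{\textnormal w}}\gtrsim 1$ by evaluating $g_{\textnormal w}\le U$ at $\eps n+\eps$, whereas the paper inserts an explicit lower bound for $\erf(\sqrt{M_{\textnormal n}}\,h)/\sqrt{M_{\textnormal n}}$ and absorbs the extra terms via $1-\alpha\gtrsim\eps$; both arguments tacitly need $\eps$ small (e.g.\ $h\ge\pi(1-\alpha)/2$ once $\eps\le 1/4$), and your route is, if anything, cleaner.
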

\begin{proof}
We start by definition of $U(x)=U_k(x)+f(x-\eps k)$,
\spliteq{}{
I_{a,b}:=\int_a^be^{-2U_k(x)}\wrt x
=
\int_a^b\abs{\frac12\csc\frac{x-\eps k}2}^2e^{-2U(x)}\wrt x.}
We obtain simple upper and lower bounds by pulling the $\csc(\cdot)$ term outside the integral,
\[
\frac14\int_a^be^{-2U(x)}\wrt x
\le
I_{a,b}
\le
c_{a,b}^2\int_a^be^{-2U(x)}\wrt x.
\]
For the upper bound, if $(a,b)\subset(\eps n,2\pi)$, then we can apply the lower bound on $U(\cdot)$ from \Cref{a30},
\spliteq{}{
\int_a^be^{-2U(x)}\wrt x
\le\int_{a}^b e^{-2g_{\textnormal w}(x)}\wrt x
\le\int_{-\infty}^\infty e^{-2g_{\textnormal w}(x)}\wrt x
=e^{-2U\pare{\frac{\eps n}2+\pi}}\sqrt{\frac\pi{\mwide}}.
}
By \Cref{a42}, we have $\frac1{\mwide}\le \frac1{\frac14+\frac1\eps\tan\frac{\eps n}4}\le\frac{\pi\eps}{2}\frac{1-\alpha}\alpha$.
For the lower bound, if $(a,b)\subset(\eps n+\eps,2\pi-\eps)$, we can apply the upper bound on $U(\cdot)$ from \Cref{a30},
\[
\int_a^b e^{-2U(x)}\wrt x\ge\int_a^be^{-2g_{\textnormal n}(x)}\wrt x=e^{-2U\pare{\frac{\eps n}2+\pi}}\sqrt{\frac\pi{\mnarrow}}\frac{\erf\pare{\sqrt{\mnarrow}\pare{b-\frac{\eps n}2-\pi}}+\erf\pare{\sqrt{\mnarrow}\pare{\frac{\eps n}2+\pi-a}}}2
\]
If $a=\eps n+\eps$ and $b=2\pi-\eps$, then
\spliteq{}{
\frac{\erf\pare{\sqrt{\mnarrow}\pare{b-\frac{\eps n}2-\pi}}}{\sqrt{\mnarrow}}
=\frac{\erf\pare{\sqrt{\mnarrow}\pare{\frac{\eps n}2+\pi-a}}}{\sqrt{\mnarrow}}
&=\frac{\erf\pare{\sqrt{\mnarrow}\pare{\pi-\frac{\eps n}2-\eps}}}{\sqrt{\mnarrow}}
\\&\ge\sqrt{\frac{\pare{1-\alpha-\frac\eps\pi}}{\pare{1-\alpha-\frac\eps\pi}\mnarrow+\frac1\pi}}.
}
By \Cref{a31} and \Cref{a42},
\spliteq{}{
\mnarrow\le C\log\frac2\alpha\mwide
&\le C\log\frac2\alpha\pare{\frac14+\frac1\eps\tan\frac{\eps n}4+\frac12\tan^2\frac{\eps n}4}.
\\&\le C\log\frac2\alpha\pare{\frac\pi{2\eps}\frac\alpha{1-\alpha}+\frac14\frac1{(1-\alpha)^2}}.
}
Plugging this in and rearranging gives
\spliteq{}{
\sqrt{\frac{\pare{1-\alpha-\frac\eps\pi}}{\pare{1-\alpha-\frac\eps\pi}\mnarrow+\frac1\pi}}
&\ge\sqrt{\frac{4(1-\alpha)\eps}{2\pi\alpha C\log\frac2\alpha+\frac\eps{1-\alpha}C\log\frac2\alpha+\frac{4\eps}\pi\frac{1-\alpha}{1-\alpha-\frac\eps\pi}}}
\\&=\sqrt{4\eps\cdot\frac{1-\alpha}\alpha}\sqrt{\frac1{\pare{2\pi+\frac\eps{\alpha(1-\alpha)}}C\log\frac2\alpha+\frac{4\eps}{\pi\alpha}\frac{1-\alpha}{1-\alpha-\frac\eps\pi}}}
.}
Recall that by assumption $1-\alpha\ge\eps$, so the final result follows.
\end{proof}

\begin{lemma}\label{a33}
Set $\gamma=(\alpha-\beta)\wedge\beta+\frac\eps{2\pi}$.
For absolute constants $C_1$, $C_2$,
\[
\int_0^{2\pi}\abs{L_k(e^{i\theta})}^2\wrt\theta
\le
e^{2U_k(\eps k)-2U\pare{{\eps n}/2+\pi}}
\frac1{\gamma^2(1-\gamma)^2}
\sqrt{\frac\eps{C_1}\cdot\frac{1-\alpha}\alpha}
\]
\[
\int_0^{2\pi}\abs{L_k(e^{i\theta})}^2\wrt\theta
\ge
e^{2U_k(\eps k)-2U\pare{{\eps n}/2+\pi}}
\sqrt{\frac\eps{C_2\log\frac2\alpha}\cdot\frac{1-\alpha}\alpha}
\]
\end{lemma}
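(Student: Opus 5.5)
We start from the identity $\abs{L_k(e^{i\theta})}=e^{U_k(\eps k)}e^{-U_k(\theta)}$. This gives
\[\int_0^{2\pi}\abs{L_k(e^{i\theta})}^2\wrt\theta=e^{2U_k(\eps k)}\int_0^{2\pi}e^{-2U_k(\theta)}\wrt\theta,\]
so both inequalities reduce to bounds on $\int_0^{2\pi}e^{-2U_k}$. The lower bound is immediate. The integrand is nonnegative, so we restrict to $(\eps n+\eps,2\pi-\eps)$ and apply the second part of \Cref{a43}. This gives exactly the claimed lower bound with $C_2=C$.

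For the upper bound we split $[0,2\pi]$ into the arc $[0,\eps n]$, which contains the nodes, and its complement $(\eps n,2\pi)$.

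\emph{The complementary arc.} On $(\eps n,2\pi)$ we use the first part of \Cref{a43}. We need to bound $c_{a,b}=\sup\abs{\tfrac12\csc\frac{x-\eps k}{2}}$. For $x\in(\eps n,2\pi)$ the distance from $x$ to $\eps k$ modulo $2\pi$ is at least $2\pi\min(\alpha-\beta,\beta)$. Hence $\abs{\csc\frac{x-\eps k}2}\lesssim 1/\gamma$, using $\sin(\pi t)\ge 2t$ for $t\le1/2$. The extra $\eps/2\pi$ in $\gamma$ only matters when $\gamma$ is degenerate, i.e.\ $k\in\{0,n\}$. In that case the singular factor is cancelled, because $U_k$ omits the $j=k$ term. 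There we instead use $e^{-2U_k}=\abs{e^{ix}-e^{ik\eps}}^2e^{-2U}\le 4e^{-2U}$ and bound $\int e^{-2U}$ directly via $g_{\textnormal w}$. The antipodal side of the circle contributes the $(1-\gamma)$ factor symmetrically. This yields a contribution $\lesssim \gamma^{-2}(1-\gamma)^{-2}e^{-2U(\eps n/2+\pi)}\sqrt{\eps(1-\alpha)/\alpha}$.

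\emph{The arc containing the nodes.} This is the main obstacle: on $[0,\eps n]$ the quadratic bounds of \Cref{a30} are not available. We will show that $\abs{L_k}$ on $[0,\eps n]$ is no larger, up to the same constants, than its size on the complementary arc near the endpoints. Here $U_k(\theta)$ is a Riemann sum with a point nearly coinciding with $\theta$. Between consecutive nodes $\theta\in(j\eps,(j+1)\eps)$, we compare $\abs{L_k(e^{i\theta})}$ with $\abs{L_k(e^{i(\eps n+\eps)})}$ by the monotonicity argument of \Cref{a20}. Concretely, we bound $\max_{[0,\eps n]}\abs{L_k}$ by $\max_{\sph1\setminus[0,\eps n]}\abs{L_k}$ times a polynomial factor, using that $L_k$ restricted to the node arc is a bounded-Lebesgue-type interpolant. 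Alternatively, we estimate $U_k(\theta)\ge U_k(\eps n+\eps)-O(\log n)$ via \Cref{a37}, with the pairing arguments of \Cref{a39}: sum the pairs of nodes symmetrically around $\theta$ and compare with $\Cl$. Then the integral over $[0,\eps n]$ is at most $\eps n\, e^{-2U_k(\eps n+\eps)}$ up to constants. By \Cref{a28}, $U(\eps n+\eps)-U(\eps n/2+\pi)$ equals $\eps \mnarrow\cdot(\pi-\eps n/2-\eps)^2/(2\eps)$, which is large. So this piece is dominated by the complementary-arc bound, possibly after absorbing the $\csc^2$ factor into $\gamma^{-2}$.

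Summing the two pieces and choosing $C_1$ small enough gives the stated upper bound. I expect the node-arc comparison to require the most care, particularly the uniformity of the $O(1)$ errors when $\alpha$ is close to $1$.
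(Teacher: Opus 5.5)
Your architecture is the paper's. You factor out $e^{2U_k(\eps k)}$. For the lower bound you restrict to $(\eps n+\eps,2\pi-\eps)$ and quote the second half of \Cref{a43}, which is fine. For the upper bound you use the first half of \Cref{a43} away from the nodes and show the node arc is negligible.

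\textbf{The case $k\in\{0,n\}$.} Here your identity is inverted, and this is a genuine error. Since $U_k(x)=U(x)+\log\abs{e^{ix}-e^{ik\eps}}$, one has $e^{-2U_k(x)}=e^{-2U(x)}\abs{e^{ix}-e^{ik\eps}}^{-2}=\tfrac14\csc^2\tfrac{x-\eps k}{2}\,e^{-2U(x)}$. So omitting the $j=k$ term is exactly what \emph{creates} the factor $c_{a,b}^2$ in \Cref{a43}. If $e^{-2U_k}\le 4e^{-2U}$ held, it would hold for every $k$ and $\gamma$ would never appear. With your split, taking $k=n$ on the arc $(\eps n,2\pi)$ gives $c_{a,b}=\infty$. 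The $g_{\textnormal w}$ bound controls only $e^{-2U}$, not the cosecant blow-up as $x\to\eps n$. The paper avoids this by cutting an $\eps$-margin off each side: $\int_{-\eps}^{2\pi-\eps}=\int_{-\eps}^{\eps n+\eps}+\int_{\eps n+\eps}^{2\pi-\eps}$. On the second piece, $c_{a,b}=\tfrac12\csc\tfrac{(\min(k,n-k)+1)\eps}{2}=\tfrac12\csc(\pi\gamma)\le\frac{1}{2\pi\gamma(1-\gamma)}$ for every $k$, using $\sin(\pi\gamma)\ge\pi\gamma(1-\gamma)$. So the $+\eps/2\pi$ in $\gamma$ is the width of that margin, not a degenerate-case patch. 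Likewise, the $(1-\gamma)$ comes from this sine inequality, not from an antipodal contribution.

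\textbf{The node arc.} With the corrected split this piece becomes $[-\eps,\eps n+\eps]$, and you only sketch it. Your first justification is false. Equispaced nodes on a proper arc have an exponentially large Lebesgue constant on the arc itself. At $\alpha=\tfrac12$, the middle Lagrange polynomial is roughly $e^{2G/\eps}$ near the ends of the arc, against $e^{4G/\eps}$ at the antipode. So only a relative comparison with the complementary arc is available. The monotonicity in \Cref{a20} moves nodes, not the evaluation point, so it does not give that comparison either.

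Your second route is the workable one. Split the sum defining $U_k(\theta)$ into the nodes on either side of $\theta$ and apply \Cref{a37} (or \Cref{a38}) to each part. Then use $\Cl(\theta)+\Cl(\eps n-\theta)\ge\Cl(\eps n)$ on $[0,\eps n]$; its $\theta$-derivative $f(\theta)-f(\eps n-\theta)$ is nonnegative on $[0,\eps n/2]$. This bounds $U_k$ on the node arc from below by $U(\eps n+\eps)$, up to logarithmic terms.

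The paper simply asserts the pointwise bound $e^{-2U_k}\le e^{-2g_{\textnormal w}(\eps n+\eps)}$ there. It then makes your ``which is large'' quantitative via $M_{\textnormal w}\ge\frac1\eps\tan\frac{\eps n}4$ and the standing assumption $1-\alpha\ge\sqrt\eps/\pi$, which yield a relative factor $\exp(-(\tfrac4{\sqrt\eps}\wedge n))$. That same assumption settles your worry about uniformity as $\alpha\to1$.
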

\begin{proof}
By definition,
\spliteq{}{
\int_0^{2\pi}\abs{L_k\pare{e^{ix}}}^2\wrt x
  &=e^{2U_k(\eps k)}\int_{-\eps}^{2\pi-\eps}e^{-2U_k(x)}\wrt x
\\&=e^{2U_k(\eps k)}
\pare{
\int_{-\eps}^{\eps n+\eps}e^{-2U_k(x)}\wrt x
+
\int_{\eps n+\eps}^{2\pi-\eps}e^{-2U_k(x)}\wrt x
}.
}
The second integral is both upper and lower bounded by \Cref{a43}. The first integral is lower bounded by 0, and upper bounded by
\spliteq{}{\label{a44}
\pare{\eps n+\eps-(-\eps)}e^{2g_{\textnormal w}(\eps n+\eps)}
&=\eps(n+2)e^{-2U\pare{\frac{\eps n}2+\pi}-\mwide\pare{\eps+\frac{\eps n}2-\pi}^2}
\\&\le\eps(n+2)e^{-2U\pare{\frac{\eps n}2+\pi}-\frac1\eps\tan\frac{\eps n}4\pare{\eps+\frac{\eps n}2-\pi}^2}
\\&\le 2\pi e^{-2U\pare{\frac{\eps n}2+\pi}}\exp\pare{-\frac{\pi^3}2\frac{\alpha(1-\alpha)}\eps\pare{1-\frac\eps{\pi(1-\alpha)}}^2}.
}
By assumption $1-\alpha\ge\frac1\pi\sqrt\eps$, so the expression inside the exponential is further bounded by
\[
\frac{\pi^3}2\frac{\alpha(1-\alpha)}\eps\pare{1-\frac\eps{\pi(1-\alpha)}}^2\ge\frac4{\sqrt\eps} \wedge n,\]
resulting in the overall upper bound of
\[
\int_0^{2\pi}\abs{L_k(e^{i\theta})}^2\wrt\theta
\le
e^{2U_k(\eps k)-2U\pare{{\eps n}/2+\pi}}
\pare{c_{\eps n+\eps,2\pi-\eps}^2
\sqrt{\frac{\pi^2\eps}2\cdot\frac{1-\alpha}\alpha}+2\pi \exp\left(-\left(\tfrac{4}{\sqrt\eps} \wedge n\right)\right)
}\]
for
\[
\frac12\le c_{\eps n+\eps,2\pi-\eps}
=\sup_{x\in(\eps n+\eps,2\pi-\eps)}\abs{\frac12\csc\frac{x-\eps k}2}
=\abs{\frac12\csc\frac{((n-k)\wedge k)\eps+\eps}2}
\le\frac1{2\pi\gamma(1-\gamma)}.\]
In particular, for all choices of $k$, the first term dominates the $2\pi \exp\left(-\left(\tfrac{4}{\sqrt\eps} \wedge n\right)\right)$ term.
\end{proof}

The same analysis provides an estimate for the quantity $\xi$ from \Cref{a22}.

\begin{lemma}\label{a45}
Let $\xi$ be as in \Cref{a22}. Then $\xi \le 1 + e^{-n^C}$ for some constant $C$.
\end{lemma}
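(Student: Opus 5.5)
By definition,
\[
\xi-1=\max_{k\in S}\frac{\int_I |L_k(e^{i\theta})|^2\wrt\theta}{\int_{n\eps+\eps}^{2\pi-\eps}|L_k(e^{i\theta})|^2\wrt\theta},\qquad I=[-\eps,n\eps+\eps],
\]
so the task is to show that for each central index $k\in S$ the mass of $|L_k|^2$ on the arc containing the nodes is exponentially small compared with its mass on the complementary arc. I would write $|L_k(e^{i\theta})|^2=e^{2U_k(\eps k)}e^{-2U_k(\theta)}$, as in the proof of \Cref{a33}. The common factor $e^{2U_k(\eps k)}$ then cancels, and it suffices to compare $\int_I e^{-2U_k}$ with $\int_{n\eps+\eps}^{2\pi-\eps}e^{-2U_k}$.

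\textbf{Denominator.} I would use the lower bound of \Cref{a43}:
\[
\int_{n\eps+\eps}^{2\pi-\eps}e^{-2U_k}\ge e^{-2U(\eps n/2+\pi)}\sqrt{\frac{\eps}{C\log\frac2\alpha}\cdot\frac{1-\alpha}{\alpha}}.
\]
This is at most polynomially small relative to $e^{-2U(\eps n/2+\pi)}$, given the standing assumption $1-\alpha\ge\sqrt\eps/\pi$ and that $\eps$ is not exponentially small (in the intended regime $\eps\asymp 1/n$).

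\textbf{Numerator.} I would follow \cref{a44}. On $[n\eps, n\eps+\eps]$ and symmetrically on $[-\eps,0]$, \Cref{a30} gives $U\ge g_{\textnormal w}$. The gap between the maximum of $e^{-2g_{\textnormal w}}$ there and its value at the center is $e^{-\mwide(\pi-\eps-\eps n/2)^2}$. This is at most $\exp(-(4/\sqrt\eps\wedge n))$, as already computed in the proof of \Cref{a33}.

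For $\theta\in[0,n\eps]$, i.e.\ between the nodes, the Taylor series argument of \Cref{a30} no longer applies directly. I would instead bound $e^{-U(\theta)}$ there by its value at the outer endpoints. Since $U$ has logarithmic poles at the nodes, a cleaner route is to use the sup bound $|L_k(e^{i\theta})|\le \max_{z}|L_k(z)|$. From the second inequality of \Cref{a20} and the analysis around it, the maximum of $|L_k|$ over the node arc is of size $e^{O(\log n)}$ for $k$ near the center, while its maximum over the complementary arc is $e^{U_k(\eps k)-U(\eps n/2+\pi)+O(\log n)}$. By \Cref{a27} and \Cref{a28}, with \Cref{a25}(5) and (8), the exponent difference is
\[
U_k(\eps k)-U(\eps n/2+\pi)\approx \frac{2\Cl(\eps n/2)-\Cl(\eps n)+2\Cl(\pi+\eps n/2)}{\eps}+O(\log n)=\frac{4}{\eps}\int_0^{\eps n/4}\log\cot\phi\wrt\phi+O(\log n),
\]
which is linear in $n$. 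So the node-arc contribution is $e^{O(\log n)}$ against a denominator of size $e^{cn}$ times polynomial factors.

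\textbf{Conclusion and main obstacle.} Dividing, and using $|S|\le 2\sqrt n+1$ only to absorb polynomial factors, gives $\xi-1\le \mathrm{poly}(n)\bigl(e^{-cn}+e^{-4/\sqrt\eps}\bigr)\le e^{-n^C}$ for a suitable constant $C$ (for instance $C=1/2$ when $\eps\asymp 1/n$). The main obstacle is making the bound on $[0,n\eps]$ rigorous uniformly in $k\in S$. The plan is to control $\max_{\theta\in[0,n\eps]}|L_k(e^{i\theta})|$ by a polynomial in $n$, for example via the standard bound on equispaced Lagrange polynomials near the center of the arc. Combined with \Cref{a17,a20}, the exponential rate then separates cleanly.
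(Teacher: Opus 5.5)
\textbf{The node-arc step fails.} You claim $\max_{\theta\in[0,n\eps]}\abs{L_k(e^{i\theta})}=e^{O(\log n)}$ for $k\in S$. This is false, and no ``standard bound'' of that kind exists: equispaced nodes exhibit the Runge phenomenon already for the central basis polynomial. Take $k=\floor{n/2}$. The sum $U_k(\eps/2)=\sum_{j\neq k}f\bigl(\eps(j-\tfrac12)\bigr)$ is essentially a midpoint Riemann sum, so $U_k(\eps/2)=\Cl(\eps n)/\eps+O(\log n)$. Meanwhile $U_k(\eps k)=2\Cl(\eps n/2)/\eps+O(\log n)$ by \Cref{a27}. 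Hence, by \Cref{a25}(5),
\[
\log\abs{L_k(e^{i\eps/2})}=\frac{2\Cl(\eps n/2)-\Cl(\eps n)}{\eps}+O(\log n)=-\frac{2\Cl(\pi+\eps n/2)}{\eps}+O(\log n).
\]
This is positive and linear in $n$; for short arcs it is about $n\log 2$, the familiar $2^n$ growth on an interval. \Cref{a20} says nothing about the size of $L_k$ on the node arc; it only compares general nodes with equispaced ones. Separately, your middle expression $\frac1\eps\bigl(2\Cl(\eps n/2)-\Cl(\eps n)+2\Cl(\pi+\eps n/2)\bigr)$ vanishes identically by \Cref{a25}(5). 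The correct exponent is $\frac2\eps\bigl(\Cl(\eps n/2)-\Cl(\pi+\eps n/2)\bigr)$.

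\textbf{What is actually needed.} Both integrals defining $\xi-1$ are exponentially large, so the lemma is a comparison of two exponential rates, not a polynomial numerator over an exponential denominator. The missing ingredient is a lower bound on the potential across the node arc. A Riemann-sum comparison gives $U_k(\theta)\ge\frac1\eps\bigl(\Cl(\theta)+\Cl(n\eps-\theta)\bigr)-O(\log n)$ for $\theta\in[0,n\eps]$. Since $\theta\mapsto\Cl(\theta)+\Cl(n\eps-\theta)$ is concave there, it is minimized at the endpoints, so the bound is at least $\frac1\eps\Cl(n\eps)-O(\log n)$. This agrees with $U_k(\eps n+\eps)$ up to $O(\log n)$ by \Cref{a28}. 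So on the node arc the integrand is, up to polynomial factors, no larger than at the outer edge. By \Cref{a25}(5), the edge sits above the center by
\[
\frac1\eps\Cl(\eps n)-U(\pi+\eps n/2)\approx\frac2\eps\Cl(\pi\alpha)\asymp\frac{\alpha(1-\alpha)\log(2/\alpha)}{\eps},
\]
which is linear in $n$.

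This is what the paper's proof uses. It invokes \cref{a44}, which bounds $\int_{-\eps}^{n\eps+\eps}e^{-2U_k}$ by the arc length times the value $e^{-2g_{\textnormal w}(\eps n+\eps)}$ at the outer edge, i.e.\ by $2\pi e^{-2U(\eps n/2+\pi)}e^{-\hat C\alpha(1-\alpha)/\eps}$, and then divides by the lower bound of \Cref{a43}. Your treatment of the denominator and of $[-\eps,0]\cup[n\eps,n\eps+\eps]$ matches this. Only the node-arc step needs to be replaced by this potential comparison.
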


\begin{proof}
By \Cref{a43} and \Cref{a44}, we have
\[ \xi = 1 + \max_{k \in S} \frac{\displaystyle{\int_{-\eps}^{n \eps + \eps} e^{-2 U_k(\theta)} \wrt \theta}}{\displaystyle{\int_{n \eps + \eps}^{2 \pi - \eps} e^{-2 U_k(\theta)} \wrt \theta}} \le 1 + \frac{2 \pi e^{- \hat C \frac{\alpha(1-\alpha)}{\eps}}}{\sqrt{\frac{\eps (1- \alpha)}{\tilde C \alpha \log (2/\alpha)}}} \le 1 + e^{-n^C}\]
for some constant $C>0$.

\end{proof}

Using \Cref{a33}, we are now prepared to estimate the sum of integrals of Lagrange polynomials.

\begin{lemma}\label{a34} For an absolute constant $C$,
\[
\sum_{k=0}^n\int_0^{2\pi}\abs{L_k(e^{i\theta})}^2\wrt\theta
\le
\frac{C\eps^2}{(1-\alpha)^6}
\exp\pare{\frac4\eps\pare{\Cl(\pi\alpha)-\Cl(\pi(1+\alpha))} }.
\]
\end{lemma}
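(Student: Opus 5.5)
Apply the upper bound of \Cref{a33} to each $k$ and sum. This gives
\[
\sum_{k=0}^n\int_0^{2\pi}\abs{L_k(e^{i\theta})}^2\wrt\theta \le e^{-2U(\eps n/2+\pi)}\sqrt{\frac{\eps}{C_1}\cdot\frac{1-\alpha}{\alpha}}\sum_{k=0}^n \frac{e^{2U_k(\eps k)}}{\gamma_k^2(1-\gamma_k)^2},
\]
where $\gamma_k=(\alpha-\beta_k)\wedge\beta_k+\frac{\eps}{2\pi}$ and $\beta_k=\frac{\eps k}{2\pi}$. The proof then reduces to three estimates: one for $U(\eps n/2+\pi)$, one for each $U_k(\eps k)$, and one for the resulting sum over $k$.

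\textbf{Step 1.} By \Cref{a28},
\[
-2U(\eps n/2+\pi)=-\tfrac{4}{\eps}\Cl\bigl(\pi+\tfrac{\eps(n+2)}{2}\bigr)-2\log(2\pi-\eps n)+O(1).
\]
Since $\Cl'$ is bounded near $\pi(1+\alpha)$ except near $2\pi$, replacing $\Cl(\pi(1+\alpha)+\eps)$ by $\Cl(\pi(1+\alpha))$ costs $\frac{4}{\eps}\cdot O(\eps\log\frac{1}{1-\alpha})$. This is a polynomial factor in $(1-\alpha)^{-1}$, and I will track such factors throughout.

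\textbf{Step 2.} By \Cref{a27},
\[
2U_k(\eps k)=\tfrac{2}{\eps}\bigl(\Cl(\eps k)+\Cl(\eps(n-k))\bigr)+2\log\eps-(\text{log terms})+O(1).
\]
Use the concavity of $\Cl$ on $[0,\pi]$: $\Cl''=-f'=-\frac12\cot\frac x2<0$ there. Together with the symmetry in \Cref{a25}(4), this bounds $\Cl(\eps k)+\Cl(\eps(n-k))\le 2\Cl(\eps n/2)$, with a quadratic deficit in $|k-n/2|$ when both arguments lie in $[0,\pi]$. When an argument exceeds $\pi$, I will use the global shape of $\Cl$, which is positive on $(0,\pi)$ and negative on $(\pi,2\pi)$. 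Then apply \Cref{a25}(5) with $x=\eps n$ to rewrite
\[
2\Cl(\eps n/2)-2\Cl(\pi+\eps n/2)=\Cl(\eps n)-4\Cl(\pi+\eps n/2)+2\Cl(\pi+\eps n/2)
\]
into the form $\Cl(\pi\alpha)-\Cl(\pi(1+\alpha))$ appearing in the exponent, using $\eps n/2=\pi\alpha$. Concretely, the combined exponent is $\frac{2}{\eps}[\Cl(\eps k)+\Cl(\eps(n-k))]-\frac{4}{\eps}\Cl(\pi+\pi\alpha)\le\frac{4}{\eps}[\Cl(\pi\alpha)-\Cl(\pi(1+\alpha))]$.

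\textbf{Step 3: the sum over $k$.} This is the main obstacle. A crude bound with $n+1$ terms, each at most the maximum, would lose a factor $n\sim\alpha/\eps$, and the stated prefactor $\eps^2$ does not allow that. I would get a geometric or Gaussian decay away from $k=n/2$ instead. For $\eps k\le\pi$, the derivative of $k\mapsto\frac{2}{\eps}\Cl(\eps k)$ is $2f(\eps k)=-2\log(2\sin\frac{\eps k}{2})$. Writing $k=n/2-t$,
\[
\tfrac{2}{\eps}\bigl(\Cl(\eps k)+\Cl(\eps(n-k))-2\Cl(\eps n/2)\bigr)\approx -\tfrac{\eps}{2}\cot\bigl(\tfrac{\pi\alpha}{2}\bigr)\,t^2,
\]
so the sum over $k$ behaves like a Gaussian sum of size $O\bigl(\sqrt{\tan(\pi\alpha/2)/\eps}\bigr)$. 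Near the endpoints $k\approx0,n$ the weight $1/\gamma_k^2$ blows up to $\eps^{-2}$. This is compensated by the factor $e^{2\log\eps}=\eps^2$ and by the logarithmic corrections in \Cref{a27}, together with the exponential deficit there. Collecting everything gives
\[
\eps^2\cdot\sqrt{\eps(1-\alpha)/\alpha}\cdot\sqrt{\alpha/(\eps(1-\alpha))}\cdot(\text{powers of }(1-\alpha)^{-1}),
\]
and the powers of $(1-\alpha)^{-1}$ come from $(2\pi-\eps n)^{-2}$, $(1-\gamma)^{-2}$, and the $\wedge(2\pi-\cdot)$ terms. I expect at most $(1-\alpha)^{-6}$ in total. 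I will check carefully that the $\sqrt{\alpha/\eps}$ factors cancel, which is the delicate bookkeeping. If the Gaussian cancellation is messy, a fallback is to bound the sum by an integral of $e^{2U_k}$ in $k$ via the concavity above.
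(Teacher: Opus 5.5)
Your plan follows the paper's proof step for step. You apply \Cref{a33} termwise, insert \Cref{a27,a28}, bound $\Cl(\eps k)+\Cl(\eps(n-k))$ by a downward parabola around $k=n/2$, and sum the resulting Gaussian in $k$.

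The genuine problem is in the bookkeeping you call delicate. At the peak $k\approx n/2$ the weight $\gamma_k^{-2}$ is about $4\alpha^{-2}$. The corrections $-\tfrac12\log(\eps k\wedge\cdot)-\tfrac12\log(\eps(n-k)\wedge\cdot)$ from \Cref{a27} contribute a further factor $(\pi\alpha)^{-2}$. The $\sqrt{\alpha/\eps}$ factors do cancel as you expect, but these do not. So the prefactor you actually obtain is of order $\eps^2\alpha^{-4}$, not $\eps^2$ times powers of $(1-\alpha)^{-1}$.

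No sharper bookkeeping can remove this, because with an absolute $C$ the inequality as stated is false for small $\alpha$. Take $n=1$. The left side is $2\pi/\sin^2(\eps/2)\sim 8\pi\eps^{-2}$. By \Cref{a25}(8) the right side is $\frac{C\eps^2}{(1-\alpha)^6}\exp\bigl(\frac8\eps\int_0^{\eps/4}\log\cot\phi\wrt\phi\bigr)$. The exponent is $2+2\log\frac4\eps+o(1)$, so the right side tends to $16e^2C$ as $\eps\to0$.

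The paper's proof has the same hole. It lower-bounds $\beta^3(\alpha-\beta)^3$ by $\frac8{9^3}\alpha^6$ and then drops the resulting $\alpha^{-6}$. What your argument, like the paper's, really proves is the bound with an extra factor $\alpha^{-c}$. That weaker bound is all \Cref{a48} uses, since $\log\frac1\alpha$ is absorbed into its $O(\log(n\eps(2\pi-n\eps)))$ error. That is the version you should state and prove.

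Three smaller points.

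\emph{Misquoted identity.} Your use of \Cref{a25}(5) is wrong; the correct identity is $2\Cl(\eps n/2)-2\Cl(\pi+\eps n/2)=\Cl(\eps n)-4\Cl(\pi+\eps n/2)$. No rewriting is needed anyway, because $\eps n/2=\pi\alpha$ already puts the exponent in the target form.

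\emph{The case $\eps k>\pi$.} You do not need the ``global shape'' of $\Cl$ here. Set $h(x)=\Cl(x)+\Cl(2\pi\alpha-x)$. Then on $(0,2\pi\alpha)$
\[
h''(x)=-\frac{\sin\pi\alpha}{2\sin\frac x2\,\sin\bigl(\pi\alpha-\frac x2\bigr)}\le-\cot\frac{\pi\alpha}2 .
\]
Hence $h(x)\le 2\Cl(\pi\alpha)-\frac12\cot\frac{\pi\alpha}2\,(x-\pi\alpha)^2$ on the whole interval, with no case split. This is exactly the quadratic bound the paper uses without proof. (In $t=n/2-k$ the curvature is $\eps\cot\frac{\pi\alpha}2$, not $\frac\eps2\cot\frac{\pi\alpha}2$.)

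\emph{The terms near $k=0,n$.} The factor $\eps^2$ is already spent on the main term, and the \Cref{a27} corrections make these terms larger, not smaller. They are controlled only by the exponential deficit $\frac1\eps\cot\frac{\pi\alpha}2(\pi\alpha-\eps k)^2$, which is of order $n(1-\alpha)$ there. This beats the $n^{O(1)}$ polynomial loss; for $\alpha$ near $1$ that uses $1-\alpha\ge\sqrt\eps/\pi$. The terms $k=0$ and $k=n$ themselves also need the separate formula for $U_0(0)=U_n(\eps n)$ in \Cref{a27}.
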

\begin{proof}
For simplicity, we abuse notation slightly by letting $C$ denote a sequence of absolute constants that differs in value at each use.
Set $\gamma=(\alpha-\beta)\wedge\beta+\frac\eps{2\pi}$. Rearranging \Cref{a33} gives
\spliteq{}{
\int_0^{2\pi}\abs{L_k(e^{i\theta})}^2\wrt\theta
\le
\frac{\sqrt{\frac\eps{C}\frac{1-\alpha}\alpha}}{e^{2U((\alpha+1)\pi)}}
\frac{e^{2U_k(2\pi\beta)}}{\gamma^2(1-\gamma)^2}.
}
which can be combined with the bound on $U_k(\eps k)$ from \Cref{a27},
\spliteq{}{
U_k(2\pi\beta)
&=\frac{\Cl(2\pi\beta)+\Cl(2\pi(\alpha-\beta))}\eps+\log\eps
\\&\quad-\frac{\log(\beta\wedge (1-\beta-\frac\eps{2\pi}))+\log((\alpha-\beta)\wedge(1 - \alpha+\beta-\frac\eps{2\pi}))}2+O(1)
}
for $0<k<n$, i.e. $\frac\eps{2\pi}\le\beta\le\alpha-\frac\eps{2\pi}$. This results in
\spliteq{}{
\int_0^{2\pi}\abs{L_k(e^{i\theta})}^2\wrt\theta
&\le
\frac{C\eps^{\frac52}\sqrt{\frac{1-\alpha}\alpha}}{e^{2U((\alpha+1)\pi)}}
\\&\cdot
\frac1{ \pare{\beta\wedge\pare{1-\beta-\frac\eps{2\pi}})}\pare{(\alpha-\beta)\wedge\pare{1-\alpha+\beta-\frac\eps{2\pi}})} }
\frac{e^{2\frac{\Cl(2\pi\beta)+\Cl(2\pi(\alpha-\beta))}\eps}}{\gamma^2(1-\gamma)^2}
\\&\le
\frac{C\eps^{\frac52}\sqrt{\frac{1-\alpha}\alpha}}{e^{2U((\alpha+1)\pi)}}\cdot{e^{\frac4\eps\Cl(\pi\alpha)}}
\\&\cdot
\frac1{ \pare{\beta\wedge\pare{1-\beta-\frac\eps{2\pi}})}\pare{(\alpha-\beta)\wedge\pare{1-\alpha+\beta-\frac\eps{2\pi}})} }
\frac{e^{-\frac{4\pi^2}\eps\cot\pare{\frac{\pi\alpha}2}\pare{\beta-\frac\alpha2}^2}}{\gamma^2(1-\gamma)^2}
\\&\le
\frac{C\eps^{\frac52}\sqrt{\frac{1-\alpha}\alpha}}{e^{2U((\alpha+1)\pi)}}
\cdot
\frac{e^{\frac4\eps\Cl(\pi\alpha)}}{(1-\alpha)^4}
\cdot
\frac{e^{-\frac{8\pi}\eps\frac{1-\alpha}{\alpha}\pare{\beta-\frac\alpha2}^2}}{\beta^3(\alpha-\beta)^3}
}
where we used a quadratic approximation of the dominant term in the exponent,
\[
\frac{\Cl(2\pi\beta)+\Cl(2\pi(\alpha-\beta))}\eps
\le\frac{2\Cl(\pi\alpha)-\frac12\cot(\pi\alpha/2)(2\pi\beta-\pi\alpha)^2}\eps.\]
Thus
\spliteq{}{
\sum_{k=1}^{n-1}\abs{L_k(e^{i\theta})}^2\wrt\theta
&\le
\frac{C\eps^{\frac52}\sqrt{\frac{1-\alpha}\alpha}}{e^{2U((\alpha+1)\pi)}}
\frac{e^{\frac4\eps\Cl(\pi\alpha)}}{(1-\alpha)^4}
\sum_{k=1}^{n-1}\frac{e^{-\frac{8\pi}\eps\frac{1-\alpha}\alpha\pare{\beta-\frac\alpha2}^2}}{\beta^3(\alpha-\beta)^3}.
}
When $\beta\in\pare{\frac13\alpha,\frac23\alpha}$, observe the denominator is lower bounded by $\frac8{9^3}\alpha^6$, so the sum can be approximated by the Gaussian integral. For the other $\beta$, the exponential term is upper bounded by $e^{-\frac{8\pi}\eps(1-\alpha)\alpha\frac1{36}}$, and the sum can be bounded by a $p$-series. This gives
\[
\sum_{k=1}^{n-1}\abs{L_k(e^{i\theta})}^2\wrt\theta
\le
\frac{C\eps^{\frac52}\sqrt{\frac{1-\alpha}\alpha}}{e^{2U((\alpha+1)\pi)}}
\frac{e^{\frac4\eps\Cl(\pi\alpha)}}{(1-\alpha)^4}
\pare{\frac1\eps
\sqrt{\frac{\eps\alpha}{1-\alpha}}+\frac{e^{c\alpha(1-\alpha)/\eps}}{\eps^3}}
\le
C\eps^2
\frac{e^{\frac4\eps\Cl(\pi\alpha)}}{(1-\alpha)^4}
e^{-2U((\alpha+1)\pi)}.
\]
The $k=0$ and $k = n$ (i.e., $\beta=0,\alpha$), terms are bounded via \Cref{a27}
\[e^{2U_0(0)}+e^{2U_n(\eps n)}=\frac{C\eps}{\alpha\wedge(1-\alpha)}e^{2\frac{\Cl(\eps n)}\eps},\]
so these terms are negligible in the final sum.
\end{proof}

\section{Proofs of \Cref{a3} and \Cref{a1,a5}}\label{a14}

Using the connection between Vandermonde matrices and Lagrange interpolating polynomials (\Cref{a12}) and the relationship between Lagrange polynomials for equispaced nodes and Riemann summation of logarithmic potentials (\Cref{a13}), we are prepared to prove \Cref{a3} and the resulting \Cref{a1,a5}. We break our proof of \Cref{a3} into three lemmas, depending on the value of $n \eps$.

\subsection{Gap satisfies $2 \pi - n \eps \le 2 \eps$}

When the nodes are nearly equispaced around the entire unit circle, a number of our estimates from \Cref{a13} are no longer valid, requiring alternate techniques. However, in this regime, we may use alternate techniques that exploit our matrix's closeness to a Fourier matrix.

\begin{lemma}\label{a46}
Let $z_0,\ldots,z_n \in \mathbb{S}^1$, $ \min_{j \ne k} \sdist(z_j,z_k) \ge \eps>0$, and $2 \pi - n \eps \le 2 \eps$. Then
\[\log \magn{V(z_0,\ldots,z_n)^{-1}}_2 = \frac4{\eps} \int_{0}^{\frac{\eps n}4} \log \cot\phi\wrt\phi  \pm O\pare{\log (2\pi - \eps n)}.\]
\end{lemma}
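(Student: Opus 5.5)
The plan is to show that both sides of the claimed identity are $O(\log n)$, so the statement reduces to a crude two-sided polynomial bound on $\magn{V^{-1}}_2$. First I record the size of the various quantities in this regime. Since there are $n+1$ distinct nodes with pairwise arc distance at least $\eps$, we have $(n+1)\eps\le 2\pi$. The hypothesis $2\pi-n\eps\le 2\eps$ gives $\eps\ge 2\pi/(n+2)$. Hence $\eps\asymp 1/n$ and $2\pi-\eps n\in[\eps,2\eps]$, so the error term $O(\log(2\pi-\eps n))$ is of order $\log n$.

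Next I estimate the main term. Since $\int_0^{\pi/2}\log\cot\phi\wrt\phi=0$ (by the substitution $\phi\mapsto\pi/2-\phi$), we get
\[\frac4\eps\int_0^{\frac{\eps n}4}\log\cot\phi\wrt\phi=-\frac4\eps\int_{\frac{\eps n}4}^{\frac\pi2}\log\cot\phi\wrt\phi .\]
The interval on the right has length $(2\pi-\eps n)/4=O(\eps)$. Near $\pi/2$ we have $\log\cot\phi=O(\pi/2-\phi)$. So the integral is $O(\eps^2)$, and the main term is $O(\eps)=O(1)$. It therefore suffices to prove $\abs{\log\magn{V^{-1}}_2}=O(\log n)$.

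The lower bound is immediate from \Cref{a16}: $\magn{V^{-1}}_2\ge 1/\magn{V}_2\ge 1/(n+1)$.

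For the upper bound I would use Cramer's rule. Each entry of $V^{-1}$ is a cofactor divided by $\det V$. Each cofactor is the determinant of an $n\times n$ matrix whose entries have modulus $1$, so by Hadamard's inequality it is at most $n^{n/2}$. This gives $\magn{V^{-1}}_2\le\magn{V^{-1}}_F\le (n+1)\,n^{n/2}/\abs{\det V}$. \Cref{a21} then reduces the problem to equispaced nodes: $\abs{\det V}\ge\abs{\det V(1,e^{i\eps},\ldots,e^{in\eps})}$. It remains to show
\[\log\abs{\det V(1,\ldots,e^{in\eps})}\ge\tfrac{n+1}2\log(n+1)-O(\log n).\]
This is the natural target, since for exact roots of unity $\abs{\det}=(n+1)^{(n+1)/2}$.

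This last estimate is the main obstacle. I would write
\[\log\abs{\det V(1,\ldots,e^{in\eps})}=\sum_{m=1}^n(n+1-m)\log\abs{2\sin\tfrac{m\eps}2}\]
and pair the terms $m$ and $n+1-m$. Writing $\delta:=2\pi-(n+1)\eps\in[0,\eps]$, one has $\sin\frac{(n+1-m)\eps}2=\sin\frac{m\eps+\delta}2$. Pairing turns the weighted sum into $\frac{n+1}2\sum_{m=1}^n\log\abs{2\sin\frac{m\eps}2}$, up to the cross terms $\frac{n+1}2\sum_m\bigl(\log\sin\frac{m\eps+\delta}2-\log\sin\frac{m\eps}2\bigr)$.

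Both pieces then need estimating. For the first, I would compare $\prod_{m=1}^n2\sin\frac{m\eps}2$ to the exact identity $\prod_{m=1}^n 2\sin\frac{\pi m}{n+1}=n+1$, using the mean-value bound on $\log\sin$ from the proof of \Cref{a22} (its derivative is at most $1/x$). This should show that the first piece equals $\frac{n+1}{2}\log(n+1)$ up to a controlled error. The cross terms telescope approximately. They are sums of $O(\delta\cot(m\eps/2))$, which total $O(\delta\cdot\frac1\eps\log n)=O(\log n)$ before multiplication by $\frac{n+1}2$. I expect this to be the delicate part: a naive bound loses a factor of $n$ here. So I would instead handle the $\delta$-dependence directly, checking monotonicity of $\log\abs{\det}$ in $\eps$ over the short interval $[2\pi/(n+2),2\pi/(n+1)]$, or invoking \Cref{a39} with $\eps$ close to $2\pi/(n+1)$.

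As a fallback, I would apply \Cref{a20} and bound $\max_k\max_{z}\abs{L_k}$ directly. Here $U_k(k\eps)$ comes from \Cref{a27}, where the Clausen terms nearly cancel since $\Cl(\eps k)+\Cl(\eps(n-k))\approx\Cl(\eps k)-\Cl(\eps k+2\pi-\eps n)$. A lower bound on $U_k(\theta)$ would come from the Riemann-sum lemmas \Cref{a37,a38}. Either route needs only $O(\log n)$ accuracy, and \Cref{a17} converts the result into the bound on $\magn{V^{-1}}_2$.
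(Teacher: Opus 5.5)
Your reduction matches the paper's. In this regime both sides are $O(\log n)$, the lower bound $\magn{V^{-1}}_2\ge 1/(n+1)$ is trivial, and the upper bound comes from a lower bound on $\abs{\det V}$ via \Cref{a21}. You control the numerator by Hadamard's inequality on cofactors, the paper by AM--GM on $\sigma_1^2,\ldots,\sigma_n^2$ with $\mathrm{trace}(V^*V)=(n+1)^2$; either works. The gap is that the step carrying all the content, $\log\abs{\det V}\ge\frac n2\log n-O(\log n)$, is left open.

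The missing observation is that \Cref{a21} should be applied with the smaller spacing $\delta:=2\pi/(n+2)\le\eps$, not with spacing $\eps$; the hypothesis permits this. Then $V(1,e^{i\delta},\ldots,e^{in\delta})$ is the leading $(n+1)\times(n+1)$ block of the $(n+2)$-point Fourier matrix. Its Gram matrix is $(n+2)I-\bm v\bm v^*$ with $\magn{\bm v}_2^2=n+1$, so $\abs{\det}^2=(n+2)^n$ exactly. (The paper writes $(n+2)^{n+1}$, a harmless slip.) Equivalently, pair $m$ with $n+2-m$ in your weighted sine sum and use $\prod_{m=1}^{n+1}2\sin\frac{m\pi}{n+2}=n+2$. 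Your Hadamard bound then gives $\magn{V^{-1}}_F\le (n+1)\pare{\frac n{n+2}}^{n/2}\le n+1$, and the lemma follows.

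The routes you propose do not reach this.
\begin{itemize}
\item Monotonicity: your reference value $\frac{n+1}2\log(n+1)$ is the \emph{maximum} of $F(\eps):=\log\abs{\det V(1,e^{i\eps},\ldots,e^{in\eps})}$ on $[2\pi/(n+2),2\pi/(n+1)]$. $F$ is concave in $\eps$, and $F'(2\pi/(n+1))=0$ because the $m$ and $n+1-m$ terms of $F'$ cancel. So $F$ is increasing there, and the monotonicity check sends you back to the endpoint $2\pi/(n+2)$, whose value is exactly what you have not computed.
\item Pairing at general $\eps$: write $\delta':=2\pi-(n+1)\eps$ for your $\delta$. The numbers $\sin\frac{m\eps+\delta'}2=\sin\frac{(n+1-m)\eps}2$ are a permutation of the $\sin\frac{m\eps}2$, so the cross terms as you wrote them vanish identically. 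The correct cross terms carry weight $m/2$. Near $\eps=2\pi/(n+2)$ they are about $-\frac n2\log n$, cancelling an equally large excess in $\frac{n+1}2\sum_m\log(2\sin\frac{m\eps}2)\approx(n+1)\log n$. The mean-value estimate fails for $m$ near $n$.
\item Other inputs: an $O(1)$-accurate estimate such as \Cref{a39} becomes $O(n)$ after multiplication by $\frac{n+1}2$. The fallback would need new estimates, since \Cref{a28} requires $(n+2)\eps<2\pi$, which fails throughout this regime.
\end{itemize}

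Finally, $\log\cot\phi$ is not $O(\pi/2-\phi)$ near $\pi/2$; it tends to $-\infty$. So the main term is $\Theta(\log n)$ rather than $O(1)$. This is still absorbed by the error term, so that part of your reduction survives.
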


\begin{proof}
First we note that
\[\frac4{\eps} \left|\int_{0}^{\frac{\eps n}4} \log \cot\phi\wrt\phi \right| = \frac4{\eps} \left|\int_{\frac{\eps n}4}^{\frac{\pi}2} \log \cot\phi\wrt\phi \right| \le \frac4{\eps} \left|\int_{0}^{\eps /2} \log \cot\phi\wrt\phi \right|= O(\log \eps).\]
What remains is to prove that $\magn{V(z_0,\ldots,z_n)^{-1}}_2$ is bounded above by a polynomial in $n$. We note that $\eps \ge \delta:=2 \pi/(n+2)$, and so, by \Cref{a21},
\[\left|\det \left(V(z_0,\ldots,z_n) \right)\right| \ge \left|\det \left(V(1,e^{i\delta},\ldots,e^{in \delta}) \right) \right|. \]
The matrix $V(1,e^{i\delta},\ldots,e^{in \delta})$ is the leading $(n+1)\times(n+1)$ submatrix of the $(n+2)$ dimensional discrete Fourier matrix, and so
\[V(1,e^{i\delta},\ldots,e^{in \delta})^*V(1,e^{i\delta},\ldots,e^{in \delta}) = (n+2)I - \bm{v} \bm{v}^*\]
for some vector $\bm{v}$ with $\|\bm{v}\|_2 = \sqrt{n+1}$. Therefore,
\[|\det\left(V(z_0,\ldots,z_n) \right)|^2 \ge |\det(V(1,e^{i\delta},\ldots,e^{in \delta}))|^2 = (n+2)^{n+1}(n+2 - \bm{v}^* \bm{v}) = (n+2)^{n+1}.\]
In addition, 
\[\mathrm{trace}\left( V(z_0,\ldots,z_n)^* V(z_0,\ldots,z_n) \right) = (n+1)^2.\]
Therefore, by the AM-GM inequality,
\[\magn{V(z_0,\ldots,z_n)^{-1}}_2^2 =\frac{\prod_{j=1}^n \sigma_{j}^2(V(z_0,\ldots,z_n))}{\prod_{j=1}^{n+1} \sigma_{j}^2(V(z_0,\ldots,z_n))} \le \frac{\left( \frac{1}{n} \sum_{j=1}^n \sigma_{j}^2(V(z_0,\ldots,z_n)) \right)^n }{(n+2)^{n+1}} \le  \frac{ \left( \frac{(n+1)^2}{n} \right)^n }{(n+2)^{n+1}} . \]
The quantity $\left(\tfrac{(n+1)^2}{n(n+2)}\right)^n$ is at most $4/3$ for $n \in \mathbb{N}$, and so
\[  \magn{V(z_0,\ldots,z_n)^{-1}}_2^2 \le \frac{1}{n+2} \left(\frac{(n+1)^2}{n(n+2)}\right)^n \le \frac{4}{3(n+2)},\]
completing the proof.
\end{proof}

\subsection{Gap satisfies $2 \eps < 2 \pi - n \eps < 2\eps^{1/2}$}

When our matrix is at least some bounded distance away from being a Fourier matrix, but still has nearly equally spaced nodes, only half of our estimates from \Cref{a13} apply. In particular, while our estimates for the maximum of a Lagrange polynomial are valid, we do not have enough of a gap to produce estimates of integrals. However, because the gap is so small, estimates of the maximum over the unit circle suffice.

\begin{lemma}\label{a47}
Let $z_0,\ldots,z_n \in \mathbb{S}^1$, $ \min_{j \ne k} \sdist(z_j,z_k) \ge \eps>0$, and $2 \eps < 2 \pi - n \eps < 2\eps^{1/2}$. Then
\[\log \magn{V(z_0,\ldots,z_n)^{-1}}_2 \le \frac4{\eps} \int_{0}^{\frac{\eps n}4} \log \cot\phi\wrt\phi  \pm O\pare{\log (2\pi - \eps n)},\]
with equality when $z_j = e^{i j \eps}$ for $j \in \{0,\ldots,n\}$.
\end{lemma}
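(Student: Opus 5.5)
The plan is to reduce everything to the equispaced configuration and then read off the exponential rate from the Riemann-sum estimates of \Cref{a13}, using only pointwise maxima of Lagrange polynomials, since integral estimates are unavailable in this regime. For the upper bound, combine \Cref{a17} with the upper half of the first inequality in \Cref{a20}. This gives
\[
\magn{V(z_0,\ldots,z_n)^{-1}}_2^2\le (n+1)\max_{k}\max_{z\in\sph1}\abs{L_k\pare{z;\{z_j\}_{j=0}^n}}^2 .
\]
The second inequality of \Cref{a20} then lets us replace the nodes by $\{e^{ij\eps}\}_{j=0}^n$. The factor $n+1$ is harmless. The hypothesis $2\eps<2\pi-n\eps<2\eps^{1/2}$ forces $\eps\asymp 1/n$ and $\abs{\log(2\pi-n\eps)}\lesssim\log n$, so every $O(\log n)$ or $O(\log\eps)$ term is absorbed into $O(\log(2\pi-\eps n))$.

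For equispaced nodes, $\log\abs{L_k(e^{i\theta})}=U_k(k\eps)-U_k(\theta)=U_k(k\eps)-U(\theta)-\log\abs{e^{i\theta}-e^{ik\eps}}$. I would bound the two pieces separately.
\begin{enumerate}
\item The term $U_k(k\eps)$ is given by \Cref{a27}. Its leading part $(\Cl(\eps k)+\Cl(\eps(n-k)))/\eps$ is maximized, up to $O(\log n)$, at $k\approx n/2$. This uses concavity of $\Cl$ on $[0,\pi]$ (the regime where $\eps n/2\le\pi$), together with $\Cl(2\pi-x)=-\Cl(x)$ for the remaining cases. The resulting maximum is $2\Cl(\eps n/2)/\eps+O(\log n)$.
\item For $\theta$ in the gap $(\eps n,2\pi)$, \Cref{a30} gives $U(\theta)\ge g_{\textnormal w}(\theta)\ge U(\eps n/2+\pi)$. \Cref{a28} evaluates this as $2\Cl(\pi+\eps(n+2)/2)/\eps+O(\log(2\pi-\eps n))$. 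Moving the argument back to $\pi+\eps n/2$ costs $\eps^{-1}\cdot\eps\cdot\sup\abs{\Cl'}$, which is $O(\log(2\pi-\eps n))$ by \Cref{a25}(6).
\item The term $-\log\abs{e^{i\theta}-e^{ik\eps}}$ is at most $\log(1/\eps)=O(\log n)$ on the gap.
\end{enumerate}
Combining these with \Cref{a25}(8) produces exactly
\[
\frac{2}{\eps}\bigl(\Cl(\eps n/2)-\Cl(\pi+\eps n/2)\bigr)=\frac4\eps\int_0^{\eps n/4}\log\cot\phi\wrt\phi .
\]

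The main obstacle is $\theta$ inside the arc $[0,\eps n]$, where \Cref{a30} gives no information. The plan is to show that $\abs{L_k}$ there is at most a polynomial factor times its value in the gap. I would first try writing $U(\theta)$ for $\theta$ between consecutive nodes as two Riemann sums of the type in \Cref{a37}, one on each side of $\theta$. Their leading terms are $(\Cl(\theta)+\Cl(\eps n-\theta))/\eps$, and this is at least $U(\eps n/2+\pi)$ up to $O(\log n)$, by the same Clausen inequalities as above. The fallback is a maximum-principle argument: for $\theta$ strictly between $j\eps$ and $(j+1)\eps$, the product $\prod_{j}\abs{e^{i\theta}-e^{ij\eps}}$ is within a factor $\mathrm{poly}(n)$ of its value at the midpoint $(j+1/2)\eps$, and it grows as the midpoint moves toward the ends of the arc. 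This reduces the interior case to the node-adjacent case, which \Cref{a28} handles via $U(\eps n+\eps)$.

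For equality when $z_j=e^{ij\eps}$, use the lower bound in \Cref{a17} and the lower half of the first inequality in \Cref{a20}:
\[
\magn{V^{-1}}_2^2\ge\frac1{n+1}\max_z\abs{L_{\floor{n/2}}(z)}^2\ge\frac1{n+1}\abs{L_{\floor{n/2}}\pare{e^{i(\eps n/2+\pi)}}}^2 .
\]
Evaluating this with \Cref{a27} at $k=\floor{n/2}$ and \Cref{a28} gives the same exponent, again with error $O(\log n)=O(\log(2\pi-\eps n))$. This matches the upper bound and yields the claimed equality.
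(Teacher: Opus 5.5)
Your proposal follows the paper's proof essentially step for step. Like the paper, you reduce via \Cref{a17,a20} to equispaced nodes, maximize $U_k(k\eps)$ at $k=\floor{n/2}$, place $\min U$ at the gap midpoint $\eps n/2+\pi$ by convexity and symmetry, evaluate with \Cref{a27,a28} and \Cref{a25}(8), and get equality from a single evaluation at $e^{i(\eps n/2+\pi)}$. Where the paper simply asserts that the minimum of $U$ lies in the gap, your Riemann-sum treatment of the arc actually justifies it, because $\Cl(\theta)+\Cl(\eps n-\theta)\ge\Cl(\eps n)=2\Cl(\pi+\eps n/2)+2\Cl(\eps n/2)$ for $\theta\in[0,\eps n]$ by \Cref{a25}(5).

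Three small repairs are needed:
\begin{enumerate}
\item The bound $-\log\abs{e^{i\theta}-e^{ik\eps}}\le\log(1/\eps)$ fails for $\theta$ near $k\eps$. This happens inside the arc for every $k$, and at the ends of the gap for $k\in\{0,n\}$. There you should run your Riemann-sum comparison on $U_k$ itself rather than splitting $U_k=U+\log\abs{e^{i\theta}-e^{ik\eps}}$.
\item The shift from $\Cl(\pi+\eps(n+2)/2)$ to $\Cl(\pi+\eps n/2)$ should use integrability of the logarithmic singularity of $\Cl'$ at $2\pi$: an average over a window of length $\eps$ is $O(\log(1/\eps))$. Its supremum on that window need not be $O(\log n)$.
\item Absorbing $O(\log n)$ into $O(\log(2\pi-\eps n))$ needs $\log n\lesssim\abs{\log(2\pi-\eps n)}$. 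This follows from $2\pi-\eps n<2\eps^{1/2}$, not from the inequality you wrote.
\end{enumerate}
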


\begin{proof}
In the regime $2 \eps < 2 \pi - n \eps < 2\eps^{1/2}$, $\log n$ and $\log \eps$ are $O(\log(2\pi - n \eps))$, and so we need not concern ourselves with logarithmic error terms. By \Cref{a17,a20},
\begin{align*}
\log \magn{V(z_0,\ldots,z_n)^{-1}}_2 &= \frac{1}{2} \max_{k \in \{0,\ldots,n\}} \log \left( \int_0^{2\pi}\abs{L_k\big(e^{i \theta};\{z_j\}_{j=0}^n\big)}^2\wrt\theta \right) + O(\log n) \\
&= \max_{k \in \{0,\ldots,n\}} \max_{z \in \mathbb{S}^1} \log \left(\abs{L_k\big(z;\{z_j\}_{j=0}^n\big)}\right) + O(\log n) \\
&\le \max_{k \in \{0,\ldots,n\}} \max_{z \in \mathbb{S}^1} \log \left(\abs{L_k\big(z;\{e^{ij\eps}\}_{j=0}^n\big)}\right) + O(\log n). 
\end{align*}
What remains is to estimate 
\[\max_{k \in \{0,\ldots,n\}} \max_{z \in \mathbb{S}^1} \log \left(\abs{L_k\big(z;\{e^{ij\eps}\}_{j=0}^n\big)}\right) = \max_{k \in \{0,\ldots,n\}} \left(U_k(k \eps) - \min_{\theta \in [0,2 \pi)} \left(U(\theta) + \log \left| e^{i \theta} - e^{i k \eps} \right| \right) \right)\] 
exactly, up to logarithmic error. Because $2 \pi - n \eps > 2 \eps$, clearly $\max_{k \in \{0,\ldots,n\}} U_k(k \eps) = U_{\lfloor n/2 \rfloor}(\lfloor n/2 \rfloor \eps)$ and $\min_{\theta \in [0,2 \pi)} U(\theta)$ is achieved in the interval $(n \epsilon, 2 \pi)$. Furthermore, $U(\theta)$ is a sum of convex functions, so it is also convex on $(n \epsilon, 2 \pi)$. By symmetry about $n \epsilon/2 + \pi$, $\min_{\theta \in [0,2 \pi)} U(\theta) = U(n \epsilon/2 + \pi)$. 

By \Cref{a27,a28} and $2 \eps< 2 \pi - n \eps < 2 \eps^{1/2}$,
\begin{align*} U_{\lfloor n/2 \rfloor}(\lfloor n/2 \rfloor \eps) - U(n \epsilon/2 + \pi) &=\frac{\Cl(\eps \lfloor n/2 \rfloor) + \Cl(\eps \lceil n/2 \rceil)- 2\Cl (\pi + \eps(n+2)/2)}{\eps} + O(\log(2\pi - n \eps)) \\
&= \frac{2}{\epsilon} \left(\Cl(\eps n/2 ) - \Cl (\pi + \eps n/2) \right) + O(\log(2\pi - n \eps)) \\
&= \frac{4}{\eps} \int_{0}^{\frac{\eps n}4} \log \cot\phi\wrt\phi + O(\log(2\pi - n \eps)).
\end{align*}
Finally, note that the difference between $U_{\lfloor n/2 \rfloor}(\lfloor n/2 \rfloor \eps) - U(n \epsilon/2 + \pi)$ and the maximum over $k$ and $\theta$ is $O(1)$, completing the proof.
\end{proof}

\subsection{Gap satisfies $ 2 \pi - n \eps > 2 \eps^{1/2}$}

Here we consider the final, general case where $n \epsilon$ is bounded away from $2 \pi$ by $\epsilon^{1/2}$. Here we have access to all of the machinery derived in \Cref{a13}, but also require much tighter estimates.

\begin{lemma}\label{a48}
Let $z_0,\ldots,z_n \in \mathbb{S}^1$, $ \min_{j \ne k} \sdist(z_j,z_k) \ge \eps>0$, and $2 \pi - n \eps \ge 2\eps^{1/2}$. Then
 \[\log\magn{V(z_0,\ldots,z_n)^{-1}} \le \frac4{\eps} \int_{0}^{\frac{\eps n}4} \log \cot\phi\wrt\phi -\frac{1}{2} \log \frac{1}\eps \pm O(\log (n \eps(2\pi - n \eps))).\]
Furthermore,
  \[\log\magn{V(1,e^{i \eps},\ldots,e^{i n \eps})^{-1}} = \frac4{\eps} \int_{0}^{\frac{\eps n}4} \log \cot\phi\wrt\phi - \log \frac{1}\eps \pm O(\log (n \eps(2\pi - n \eps))).\]
\end{lemma}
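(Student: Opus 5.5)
For the upper bound, I will first reduce to equispaced nodes. By \Cref{a17} and \Cref{a20},
\[
\magn{V(z_0,\ldots,z_n)^{-1}}^2\le \sum_{k}\tfrac1{2\pi}\int\abs{L_k}^2 \le (n+1)\max_k\max_{z\in\sph1}\abs{L_k(z;\{e^{ij\eps}\}_{j=0}^n)}^2 .
\]
Writing $\log\abs{L_k(e^{i\theta})}=U_k(k\eps)-U_k(\theta)$, I need to bound $\max_k U_k(k\eps)$ and $\min_\theta U_k(\theta)$. \Cref{a27} bounds $U_{\lfloor n/2\rfloor}(\lfloor n/2\rfloor\eps)$ by $\frac{2\Cl(\eps n/2)}{\eps}+\log\eps-\log(\eps n\wedge(2\pi-\eps n))+O(1)$. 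The other $k$ are no larger up to $O(1)$, because $\Cl(2\pi\beta)+\Cl(2\pi(\alpha-\beta))$ is maximized at $\beta=\alpha/2$ by concavity of $\Cl$ on $[0,\pi]$. For the minimum, convexity and symmetry of $U$ on $(n\eps,2\pi)$ place it at $n\eps/2+\pi$. The extra factor $\log\abs{e^{i\theta}-e^{ik\eps}}$ is at most $\log 2$ there, and near the nodes it is handled directly, since $U_k\ge U-\log 2$. \Cref{a28} then gives $U(n\eps/2+\pi)=\frac{2\Cl(\pi+\eps(n+2)/2)}{\eps}+\log(2\pi-\eps n)+O(1)$, and $\Cl$ is Lipschitz there, so shifting the argument by $\eps$ costs $O(\log\frac{1}{2\pi-n\eps})$. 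Subtracting and applying \Cref{a25}(8) gives
\[
\log\max\abs{L_k}\le \frac4\eps\int_0^{\eps n/4}\log\cot\phi\,d\phi+\log\eps+O(\log(n\eps(2\pi-n\eps))).
\]
With the $\frac12\log(n+1)$ factor and $n\approx n\eps/\eps$, the $\log\eps$ becomes $-\frac12\log\frac1\eps$ up to the allowed error. This is the first claim.

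For the exact estimate with equispaced nodes, I work with $L^2$ norms instead of sup norms. For the upper bound, \Cref{a17} gives $\magn{V^{-1}}^2\le\frac1{2\pi}\sum_k\int\abs{L_k}^2$, and \Cref{a34} bounds this by $\frac{C\eps^2}{(1-\alpha)^6}\exp\bigl(\frac4\eps(\Cl(\pi\alpha)-\Cl(\pi(1+\alpha)))\bigr)$. Taking logarithms and halving, then applying \Cref{a25}(8) with $x=2\pi\alpha=\eps n$ so that $\Cl(\pi\alpha)-\Cl(\pi+\pi\alpha)=2\int_0^{\eps n/4}\log\cot$, gives exactly $\frac4\eps\int_0^{\eps n/4}\log\cot-\log\frac1\eps$ plus $O(\log(1-\alpha))$.

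For the matching lower bound, I will use the test vector of \Cref{a22}. By \Cref{a45}, $\xi\le 1+e^{-n^C}$, so $(1-(\xi-1)|S|)|S|/\xi\gtrsim\sqrt n$. This gives
\[
\magn{V^{-1}}^2\gtrsim\sqrt n\,\tfrac1{2\pi}\int\abs{L_{\lfloor n/2\rfloor}}^2 .
\]
I then apply the lower bound of \Cref{a33} with $k=\lfloor n/2\rfloor$, which is $e^{2U_k(k\eps)-2U(\eps n/2+\pi)}\sqrt{\eps(1-\alpha)/(C\alpha\log\frac2\alpha)}$, and insert \Cref{a27} and \Cref{a28} for the two potentials. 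The exponent becomes $\frac4\eps\int_0^{\eps n/4}\log\cot+2\log\eps+O(\log(n\eps(2\pi-n\eps)))$. Multiplying by $\sqrt{n\eps}$, the polynomial factor is $\eps^{2}\cdot\eps^{1/2}\cdot n^{1/2}=\eps^2(n\eps)^{1/2}$, so after taking square roots the lower bound is $-\log\frac1\eps$, matching the upper bound.

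The main obstacle is bookkeeping the polynomial factors. Every $\frac12\log(\cdot)$ correction from \Cref{a27}, \Cref{a28}, \Cref{a33} and \Cref{a34} has to be tracked so that the powers of $\eps$ come out exactly, with only terms of the form $\log(n\eps)$ and $\log(2\pi-n\eps)$ absorbed into the error. The $\log\frac2\alpha$ and $1/\alpha$ factors are $O(\log n\eps)$, so they are acceptable.
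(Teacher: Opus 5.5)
Your outline is the paper's proof. \Cref{a17,a20} reduce the general bound to $\max_k\max_z|L_k|$ for equispaced nodes plus $\frac12\log(n+1)$. \Cref{a27,a28} and \Cref{a25}(8) evaluate the potentials. \Cref{a17,a34} give the equispaced upper bound, and the test vector of \Cref{a22} with \Cref{a45,a33} gives the matching lower bound. Your $\eps$-bookkeeping agrees with the paper's, with one slip: before the square root, $2U_k(k\eps)-2U(\eps n/2+\pi)$ is $\frac8\eps\int_0^{\eps n/4}\log\cot\phi\wrt\phi+2\log\eps+O(\cdot)$, not $\frac4\eps\int+\cdots$.

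The step that fails as written is your justification of $\min_\theta U_k(\theta)\ge U(\eps n/2+\pi)-O(\log(n\eps(2\pi-n\eps)))$. The general upper bound needs this because $\log|L_k(e^{i\theta})|=U_k(k\eps)-U_k(\theta)$.
\begin{itemize}
\item By \cref{a23}, $U_k(\theta)=U(\theta)+\log|e^{i\theta}-e^{ik\eps}|$. So $|e^{i\theta}-e^{ik\eps}|\le2$ gives $U_k\le U+\log2$, the reverse of the $U_k\ge U-\log 2$ you invoke.
\item Your inequality is false whenever $|e^{i\theta}-e^{ik\eps}|<\frac12$, and $U_k-U\to-\infty$ as $\theta\to k\eps$.
\item Likewise, at the gap center you need a lower bound, $|e^{i\theta}-e^{ik\eps}|\ge 2\cos(n\eps/4)$, which costs $O(\log\frac1{2\pi-n\eps})$. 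The upper bound $\log2$ is on the wrong side.
\end{itemize}
The paper's proof also passes over this minimization without argument, so it has to be supplied in either write-up.

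One way to supply it, splitting the circle into the gap and the node arc:
\begin{itemize}
\item \emph{Near half of the gap.} For $\theta\in(n\eps,\,n\eps/2+\pi]$, the point $\theta-k\eps$ is at least as far from $2\pi\mathbb Z$ as $\theta-n\eps$. Hence $f(\theta-k\eps)\le f(\theta-n\eps)$ and $U_k(\theta)\ge U_n(\theta)$. Here $U_n$ is the potential of the $n$ equispaced nodes $e^{ij\eps}$, $j<n$.
\item Your convexity and symmetry argument, with \Cref{a28} applied to $n-1$ in place of $n$, then gives $U_n\ge U(\eps n/2+\pi)-O(\log\frac1{2\pi-n\eps})$ there. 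The index shift is exactly your Lipschitz estimate for $\Cl$ near $\pi+n\eps/2$.
\item \emph{Far half of the gap.} This is symmetric, with $U_0$ in place of $U_n$.
\item \emph{Node arc $\theta\in[0,n\eps]$.} Here $U_k(\theta)$ is a Riemann sum for $\frac{\Cl(\theta)+\Cl(n\eps-\theta)}\eps\ge\frac{\Cl(n\eps)}\eps$, with error $O(\log\frac1\eps+\log\frac1{2\pi-n\eps})$.
\item By \Cref{a25}(5), the leading term of $U(\eps n/2+\pi)$ is $\frac{\Cl(n\eps)-2\Cl(n\eps/2)}\eps$. The margin $\frac{2\Cl(n\eps/2)}\eps$ is $\gtrsim\min(n,\eps^{-1/2})$ by \Cref{a25}(7) and $2\pi-n\eps\ge2\eps^{1/2}$. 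It therefore absorbs that error, up to terms already allowed.
\end{itemize}

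A smaller point concerns $\max_kU_k(k\eps)$. Concavity of the leading term does not by itself give ``no larger up to $O(1)$'', because the $-\frac12\log$ corrections in \Cref{a27} favour small $k$ by up to $\frac12\log n$. Use instead the exact identity $U_{k+1}((k+1)\eps)-U_k(k\eps)=f((k+1)\eps)-f((n-k)\eps)\ge0$ for $2k+1\le n$. This places the maximum at $k=\lfloor n/2\rfloor$.
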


\begin{proof}
First we consider the upper bound. By \Cref{a17,a20,a27},
\begin{align*}\log\magn{V(z_0,\ldots,z_n)^{-1}} &\le \max_{k\in\{0,\ldots,n\}} \max_{z \in \mathbb{S}^1} \, \log \abs{L_k\big(z;\{e^{ij \eps}\}_{j=0}^n\big)} + \frac{1}{2} \log n \\
&\le \max_{k\in\{0,\ldots,n\}} U_{k}(k \eps) - U(n \eps/2 + \pi) + \frac{1}{2} \log n + O(\log (2 \pi - n \eps)) \\
&= U_{\lfloor n/2 \rfloor}(\lfloor n/2 \rfloor) - U(n \eps/2 + \pi) + \frac{1}{2} \log n + O(\log (n \eps(2 \pi - n \eps))),
\end{align*}
and, by \Cref{a27,a28},
\begin{align*}
U_{\lfloor n/2 \rfloor}(\lfloor n/2 \rfloor) - U(n \eps/2 + \pi) &= \frac{2}{\eps}\left( \Cl(\eps \lfloor n/2 \rfloor) - \Cl(\pi + \eps n/2) \right) + \log \eps + O(\log (n \eps(2 \pi - n \eps))) \\
&=\frac{4}{\eps} \int_{0}^{\frac{\eps n}4} \log \cot\phi\wrt\phi + \log \eps + O(\log (n \eps(2 \pi - n \eps))),
\end{align*}
completing the proof of the upper bound. For our desired equality, \Cref{a17,a34} give the upper bound
\[\log\magn{V(1,e^{i \eps},\ldots,e^{i n \eps})^{-1}} \le \frac{2}{\eps}\left(\Cl(n \eps/2) - \Cl(\pi + n \eps/2) \right) + \log \eps + O(\log (n \eps(2\pi - n \eps))).\]
What remains is to produce a matching lower bound.
By \Cref{a45}, the quantity $\xi-1$ in \Cref{a22} is exponentially small in $n$. Therefore, by \Cref{a22,a33},
\begin{align*}\log\magn{V(1,e^{i \eps},\ldots,e^{i n \eps})^{-1}} &\ge \frac{1}{4} \log n + \frac{1}{2}\log \left(\int_0^{2\pi}\abs{L_{\lfloor n/2 \rfloor}(e^{i\theta})}^2\wrt\theta \right) +O(1) \\
&\ge \frac{1}{4} \log n + \frac{1}{4} \log \eps + U_{\lfloor n/2 \rfloor} (\eps \lfloor n/2 \rfloor) - U(\eps n/2 + \pi) + O(\log (n \eps(2\pi - n \eps)))\\
&= \frac{2}{\eps}\left(\Cl(n \eps/2) - \Cl(\pi + n \eps/2) \right) + \log \eps + O(\log (n \eps(2\pi - n \eps))),
\end{align*}
completing the proof.
\end{proof}

\subsection{Proofs of \Cref{a3} and \Cref{a1,a5}}\label{a49}

\begin{proof}[Proof of \Cref{a3}]
\Cref{a3} follows from applying \Cref{a46} when $2\pi - n \eps \le 2 \eps$, \Cref{a47} when $2\eps< 2 \pi - n \eps < 2 \eps^{1/2}$, and \Cref{a48} when $2 \pi - n \eps \ge 2 \eps^{1/2}$.
\end{proof}

\begin{proof}[Proof of \Cref{a1}]
Without loss of generality, we may assume that $|S| = |T|$ and restrict our attention to the smallest singular value, as $ \sqrt{\max(|S|,|T|)} \le \sigma_1(F_{S,T}) \le \sqrt{N}$ for all $S,T \subset[N]$ and the smallest singular value $\sigma_{\min(m , n)}(A)$ of a matrix $A \in \mathbb{R}^{m \times n}$ with $m<n$ (resp. $m>n$) is non-increasing when at most $n-m$ rows (resp. $m-n$ columns) are added. Applying \Cref{a3} with $n = \alpha N -1$ and minimum angle distance $\eps = 2 \pi /N$, and noting that
\[ \int_{\frac{\alpha \pi}{2} -\frac{\pi}{2N}}^{\frac{\alpha \pi}{2}}  \log \cot\phi\wrt\phi =\frac{O\left((\log \left( \alpha (1 - \alpha) \right) \right)}{N},\]
we obtain our desired upper bounds. Furthermore, by \Cref{a3}, equality is obtained when $S = T = \{1,\ldots,|S|\}$, or, equivalently, when $|S| = |T|$ and both are cyclically contiguous subsets.
\end{proof}

\begin{proof}[Proof of \Cref{a5}]
When $\alpha \in [1/3,2/3]$, \Cref{a5} follows immediately from \Cref{a1}. When $\alpha \not \in [1/3,2/3]$, we must show that the $O(\log (\alpha(1-\alpha)))$ term from \Cref{a1} is negligible. Indeed, we have
\[\frac{2 G N}{\pi} - \frac{2 N}{\pi}\int_{0}^{\frac{\alpha \pi}{2}} \log \cot \phi\wrt\phi \ge  \frac{2 N}{\pi}  \int_{\frac{\pi}{6}}^{\frac{\pi}{4}} \log \cot \phi\wrt\phi  > \frac{N}{25}, \]
which is asymptotically larger than $O\left( \log (\alpha(1-\alpha))\right)$, completing the proof.
\end{proof}

\section{Conditioning for any polynomials and any measures}\label{a15}

Here we prove \Cref{a2} using two lemmas. The first lemma (\Cref{a50}) bounds the difference in logarithmic potential between two measures close in KS-distance. The second lemma (\Cref{a51}), using the first, produces tight bounds for the maximum value of a Lagrange polynomial in terms of a logarithmic potential. Combining this with the previously established connection between Vandermonde-like matrices and Lagrange polynomials (\Cref{a19}) completes the proof of \Cref{a2}.

\begin{lemma}\label{a50}
Let $\mu_c$ and $\mu_d$ be two probability measures supported on a disk of diameter $R$.
Fix any $z\in\C$. Pick $\beta,\rho>0$ and
\[\eps\le\min\pare{\dist(z,\supp(\mu_d)),\pare{\frac{\beta}{\rho}\ks(\mu_d,\mu_c)}^{1/\beta}}.\]
If
\[\sup_{r\in(0,\eps)}\sup_{\zeta\in\C}\frac{\mu_c(\ball(\zeta,r))}{r^\beta}\le\rho.\]
Then
\[\abs{U^{\mu_d}(z)-U^{\mu_c}(z)}\le\log\pare{\frac{eR}\eps}\ks(\mu_d,\mu_c).\]
\end{lemma}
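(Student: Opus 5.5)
Fix $z$ and set $\delta:=\ks(\mu_d,\mu_c)$. The plan is to express both potentials through the distribution functions of $|z-t|$ and integrate by parts. The key observation is that the quantities $\mu(\ball(z,r))$ are exactly what the KS distance controls. Let $F_\star(r)=\mu_\star(\ball(z,r))$ for $\star\in\{c,d\}$. Since both measures are probability measures, the layer-cake (Stieltjes) formula gives, for any $r_0>0$ and any $R'\ge \sup_{t\in\supp\mu_c\cup\supp\mu_d}|z-t|$,
\[
U^{\mu_\star}(z)=\int_{[0,R']}\log\frac1r\wrt F_\star(r)
=\log\frac1{R'}+\int_{0}^{R'}\frac{F_\star(r)}{r}\wrt r .
\]
Hence
\[
U^{\mu_d}(z)-U^{\mu_c}(z)=\int_0^{R'}\frac{F_d(r)-F_c(r)}{r}\wrt r,
\]
and $|F_d(r)-F_c(r)|\le\delta$ for every $r$. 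The divergence near $r=0$ has to be cut off using the two hypotheses on $\eps$.

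I would split the integral at $r=\eps$. On $(\eps,R')$, bound the integrand by $\delta/r$, which contributes at most $\delta\log(R'/\eps)$. On $(0,\eps)$, use $\eps\le\dist(z,\supp\mu_d)$: then $F_d(r)=0$ there (with $\ball$ closed this holds for $r<\eps$, which suffices). The integrand becomes $-F_c(r)/r$, so its absolute value is at most
\[
\int_0^\eps\frac{\rho r^\beta}{r}\wrt r=\frac{\rho}{\beta}\eps^\beta\le\delta ,
\]
using the density hypothesis on $\mu_c$ and then $\eps\le(\beta\delta/\rho)^{1/\beta}$. The two pieces together give $\delta(1+\log(R'/\eps))=\delta\log(eR'/\eps)$.

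The main obstacle is the choice of $R'$: I need $R'\le R$ (up to the convention "diameter $R$"), and this is only automatic when $z$ lies near the supports. For $z$ inside the disk of diameter $R$ containing both supports, all distances $|z-t|$ are at most $R$, so $R'=R$ works and we are done. For $z$ outside this disk, I would refine the computation by noting that $F_d-F_c$ vanishes both for $r<r_{\min}:=\dist(z,\text{disk})$ and for $r>r_{\min}+R$. The integral then lives on $[r_{\min},r_{\min}+R]$, and I split at $\max(\eps,r_{\min})$. The tail is bounded by $\delta\log\frac{r_{\min}+R}{\max(\eps,r_{\min})}\le\delta\log\frac{2R}{\eps}$ or better, and this is absorbed into $\log(eR/\eps)$ since the factor $\log 2\le 1$ is balanced by the small-$r$ piece, which is now empty whenever $r_{\min}\ge\eps$. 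I expect this bookkeeping, namely checking that the constant really comes out as $e$ in every case, to be the only fiddly part. The core argument is the integration-by-parts identity together with the split at $\eps$.
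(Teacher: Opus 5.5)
For $z$ in the disk, your argument is the paper's proof written in the variable $r$ instead of $t=\log(r/\eps)$. The paper splits $U^\mu(z)=\log\frac1\eps+\int_\Omega\log\frac{\eps}{|z-\zeta|}\wrt\mu(\zeta)-\int_\C\bigl(\log\frac{|z-\zeta|}{\eps}\bigr)_+\wrt\mu(\zeta)$ and applies the layer-cake formula to each piece. That is exactly your identity $\int_0^R(F_d-F_c)\,\frac{dr}{r}$ split at $r=\eps$, with the same two estimates: $\rho\eps^\beta/\beta\le\ks(\mu_d,\mu_c)$ inside and $\log(R/\eps)\ks(\mu_d,\mu_c)$ outside. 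The paper cuts its outer integral at $t=\log(R/\eps)$, so it tacitly assumes $|z-\zeta|\le R$ on both supports. You are right that $z$ outside the disk is a separate case; the paper's proof does not address it.

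Your sketch for that case does not yet give the stated constant (write $\delta=\ks(\mu_d,\mu_c)$ as you do).

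\emph{First,} the step $\log\frac{r_{\min}+R}{\max(\eps,r_{\min})}\le\log\frac{2R}{\eps}$ requires $\eps\le R$. This does not follow from $\eps\le\dist(z,\supp(\mu_d))$ when $z$ is far away. It does follow from the density hypothesis: if $\eps>R/2$, apply it at the center of the disk with $r=R/2$ to get $\rho(R/2)^\beta\ge1$, hence $\eps\le(\beta/\rho)^{1/\beta}\le\beta^{1/\beta}R/2<R$. With this, the subcase $r_{\min}\ge\eps$ is fine.

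\emph{Second,} when $0<r_{\min}<\eps$ the small-$r$ piece is not empty. Your two estimates then give only $\delta\bigl(1+\log\frac{R+r_{\min}}{\eps}\bigr)$, which exceeds $\delta\log\frac{eR}{\eps}$; the remark about $\log 2\le 1$ does not reach this subcase. Closing it needs a sharper inner bound. On $(r_{\min},\eps)$ you have $F_c\le\min(\rho r^\beta,\delta)$, since $F_d=0$ there. With $\rho\eps^\beta\le\beta\delta$, the inner piece is then at most $\delta\int_{r_{\min}/\eps}^1\min(\beta x^\beta,1)\,\frac{dx}{x}$. You must then check by calculus that this is at most $\delta\bigl(1-\log(1+r_{\min}/R)\bigr)$, using $\eps\le\max(1,\beta^{1/\beta})R/2$. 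That check does go through, but it is a real computation rather than constant-chasing.

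If you only want what \Cref{a51} uses, your estimates already give $\delta\log\frac{e(R+\eps)}{\eps}\le\delta\log\frac{2eR}{\eps}$ for every $z$. That suffices there, since only the order of the bound matters.
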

\begin{proof}
Let $(X)_+:=\max(X,0)$ and $\Omega$ be the interior of $\ball(z,\eps)$.
For probability measures $\mu$, we can decompose
\[
U^\mu(z)
=\int_\C\log\frac1{\abs{z-\zeta}}\wrt\mu(\zeta)
=\log\frac1\eps
+\int_\Omega\log\frac\eps{\abs{z-\zeta}}\wrt\mu(\zeta)
-\int_\C\pare{\log\frac{\abs{z-\zeta}}\eps}_+\wrt\mu(\zeta).
\]
This can be applied to $\mu_c$ and $\mu_d$ along with the triangle inequality to obtain
\spliteq{}{
    \abs{U^{\mu_d}(z)-U^{\mu_c}(z)}
    &\le
\int_\Omega\log\frac\eps{\abs{z-\zeta}}\wrt\mu_d(\zeta)
+\int_\Omega\log\frac\eps{\abs{z-\zeta}}\wrt\mu_c(\zeta)
\\&
+\abs{\int_\C\pare{\log\frac{\abs{z-\zeta}}\eps}_+\wrt\mu_c(\zeta)
-\int_\C\pare{\log\frac{\abs{z-\zeta}}\eps}_+\wrt\mu_d(\zeta)}
.}
The first term vanishes since the support of $\mu_d$ misses $\Omega$. For the second term, observe that
\spliteq{\label{a52}}{
 \int_\Omega\log\frac\eps{\abs{z-\zeta}}\wrt\mu_c(\zeta)
  &=\int_0^\infty\mu_c\pare{\set{\zeta:\log\frac\eps{\abs{z-\zeta}}\ge t}}\wrt t
\\&=\int_0^\infty\mu_c\pare{\ball(z,\eps e^{-t})}\wrt t
\\&\le\int_0^\infty\rho\eps^\beta e^{-t\beta}\wrt t
\\&={\rho\eps^\beta}/\beta
\\&\le\ks(\mu_d,\mu_c)
.}
For the third term, note that
\spliteq{}{
\int_\C\pare{\log\frac{\abs{z-\zeta}}\eps}_+\wrt\mu(\zeta)
&=
\int_0^{\log(R/\eps)}\mu\pare{\set{\zeta:\log\frac{\abs{z-\zeta}}\eps\ge t}}\wrt t
\\&=
\int_0^{\log(R/\eps)}\mu\pare{\C\backslash\ball\pare{z,\eps e^t}}\wrt t
}
which when applied to $\mu_c$ and $\mu_d$ gives a bound of
\spliteq{\label{a53}}{
&\abs{
\int_0^{\log(R/\eps)}\biggr(
\mu_d\pare{\C\backslash\ball\pare{z,\eps e^t}}
-
\mu_c\pare{\C\backslash\ball\pare{z,\eps e^t}}
\biggr)\wrt t}
\\&\le\log(R/\eps)\sup_{0\le t\le\log(R/\eps)}
\biggr|\mu_d\pare{\ball\pare{z,\eps e^t}}
-
\mu_c\pare{\ball\pare{z,\eps e^t}}\biggr|
\\&\le\log(R/\eps)\ks(\mu_d,\mu_c).}
Adding \cref{a52,a53} gives the result.
\end{proof}

\begin{lemma}\label{a51}
Fix $S=\set{z_0,\ldots,z_n}\subset\C$ and set $\eps=\min_{j\neq k}\abs{z_j-z_k}$.
Let $\mu$ be a probability measure satisfying the conditions of \Cref{a2}.
\newcommand{\range}{\textnormal{range}}
Set $\Delta=\sup_{z\in\supp(\mu)}U^\mu(z)-\inf_{z\in\supp(\nu)}U^\mu(z)$. Put $s=\min(\eps,\ks(\mu_S,\mu))$. Then
\begin{align}
\max_{0\le k\le n}\sup_{z\in\supp(\nu)}&\frac{\log L_k(z)}n
\le
\Delta+O\pare{\log\pare{1/s}\ks(\mu_S,\mu)}.\label{a54}
\\
\max_{0\le k\le n}\sup_{z\in\supp(\nu)}&\frac{\log L_k(z)}n
\ge
\Delta-O\pare{\log\pare{1/s}\ks(\mu_S,\mu)+\ks(\mu_S,\mu)^{\alpha/\beta}}
\end{align}
where the Ohs suppress dependency on $\rho_1,\rho_2,R,\beta,\alpha$.
\end{lemma}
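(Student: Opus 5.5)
The starting point is the identity
\[
\frac{\log\abs{L_k(z)}}{n}=\frac1n\sum_{j\ne k}\log\frac1{\abs{z_k-z_j}}-\frac1n\sum_{j\ne k}\log\frac1{\abs{z-z_j}} .
\]
Each sum is $(n+1)/n$ times the logarithmic potential of a nearly uniform discrete measure: either $\mu_{S\setminus\{z_k\}}$, or $\mu_S$ with the singular self-term removed. The plan is to replace each such potential by $U^\mu$ using \Cref{a50}. That gives $\log\abs{L_k(z)}/n\approx U^\mu(z_k)-U^\mu(z)$, up to the error terms in the statement. After this replacement, maximizing over $k$ and over $z\in\supp(\nu)$ produces $\sup_{S}U^\mu-\inf_{\supp(\nu)}U^\mu$. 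The upper bound uses $S\subset\supp(\mu)$, and the lower bound uses the fact that $S$ comes close to the maximizer $z_+$.

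For the \textbf{upper bound}, I treat the two terms separately.

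\emph{Denominator term.} I apply \Cref{a50} at the point $z_k$ with $\mu_d=\mu_{S\setminus\{z_k\}}$ and $\mu_c=\mu$. This is legitimate because $\dist(z_k,S\setminus\{z_k\})\ge\eps$. Removing one atom changes the KS distance by at most $1/n$, which is absorbed into $\ks(\mu_S,\mu)$ since $\ks(\mu_S,\mu)\gtrsim 1/n$ for a non-atomic $\mu$. I take the radius in \Cref{a50} to be $\min(\eps,(\beta\,\ks/\rho_1)^{1/\beta})$, which is at least a polynomial (in $\rho_1,\beta$) multiple of $s^{\max(1,1/\beta)}$. Hence the error is $O(\log(1/s)\,\ks(\mu_S,\mu))$, and the denominator sum is $nU^\mu(z_k)+O(n\log(1/s)\ks)$.

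\emph{Numerator term.} A point $z$ may lie arbitrarily close to a node, so \Cref{a50} does not apply directly. Here I only need a one-sided bound, namely a lower bound on $\sum_{j\ne k}\log\frac1{\abs{z-z_j}}$. I will truncate the kernel at scale $r=s$, i.e.\ use $\min(\log\frac1{\abs{z-t}},\log\frac1r)$, which only decreases the sum. The truncated kernel is bounded, and its difference between $\mu_S$ and $\mu$ is controlled by the same layer-cake argument as in \Cref{a50}. That argument gives $O(\log(R/r)\,\ks)$. Replacing $U^\mu(z)$ by the truncated potential of $\mu$ costs at most $\rho_1 r^\beta/\beta\lesssim\ks$ by hypothesis (2). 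So the numerator sum is at least $nU^\mu(z)-O(n\log(1/s)\ks)$.

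Combining the two terms, $\log\abs{L_k(z)}/n\le U^\mu(z_k)-U^\mu(z)+O(\log(1/s)\ks)$. Taking suprema gives \eqref{a54}, since $z_k\in\supp(\mu)$ and $z\in\supp(\nu)$.

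For the \textbf{lower bound}, I choose $z$ to be a minimizer $z_-$ of $U^\mu$ over $\supp(\nu)$. The difficulty is that $z_-$ may be close to nodes, and then the numerator estimate above goes the wrong way. I plan to handle this by perturbing $z_-$ by distance about $\ks^{1/\beta}$ to a point $z'$ at distance $\gtrsim\ks^{1/\beta}$ from $S$. Such a point exists because the nodes have density controlled via (2). The perturbation costs $C\,\ks^{\alpha/\beta}$ by the Hölder condition (4), and then \Cref{a50} applies at $z'$. Strictly, $z'$ must remain in $\supp(\nu)$. If that is not automatic, I instead bound the potential of $\mu_S$ from above at $z_-$ directly, using only the far-field part, since the near part can only enlarge $\abs{L_k}$.

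For $k$, I choose the node $z_k$ nearest to $z_+$. Hypothesis (3) gives $\mu(\ball(z_+,r))\ge\rho_2 r^\beta$ with $r=\dist(z_+,S)$. Since $\mu_S$ puts no mass in the open disk, the KS bound forces $\rho_2 r^\beta\le\ks$, so $r\lesssim\ks^{1/\beta}$. Then $\abs{U^\mu(z_k)-U^\mu(z_+)}\le C\ks^{\alpha/\beta}$, and the denominator step from the upper bound gives $\log\abs{L_k(z_k)}$-type control as before.

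The main obstacle I expect is the singular-kernel bookkeeping: the truncation near nodes, and making sure the constants in \Cref{a50} remain polynomial in $\rho_1,\beta,R$ when the radius is set to $s$.
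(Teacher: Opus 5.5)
Your upper bound follows the paper's argument: \Cref{a50} at $z_k$, plus the $\tfrac1{n+1}$ comparison between $\mu_{S\setminus\set{z_k}}$ and $\mu_S$. Your truncation for the term at $z\in\supp(\nu)$ is in fact a needed repair. The paper applies \Cref{a50} at $z_-^{(k)}$ without checking $\dist(z_-^{(k)},S)\ge\eps'$. Only the one-sided bound $U^{\mu_k}(z)\ge U^\mu(z)-O(\log(1/s)\ks(\mu_S,\mu))$ is needed, and truncation gives it at every $z$. One fix: truncate at $r=\min\pare{\eps,(\beta\ks(\mu_S,\mu)/2\rho_1)^{1/\beta}}$, the radius you already use at $z_k$, rather than at $r=s$. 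For $\beta<1$ the cost $\rho_1 s^\beta/\beta$ is not $O(\log(1/s)\ks(\mu_S,\mu))$.

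The genuine gap is the infimum side of the lower bound, and your fallback cannot close it because its sign is backwards. Near a node $z_j$ the term $\log\frac1{\abs{z-z_j}}$ is large and positive. It therefore raises $U^{\mu_k}(z)$ and shrinks $\abs{L_k(z)}$; indeed $L_k(z_j)=0$ for $j\ne k$. A lower bound on $\log\abs{L_k(z_-)}$ needs an upper bound on $U^{\mu_k}(z_-)$, and the near field is exactly what destroys it, so it cannot be discarded. You must instead pass to a point $a\in\supp(\nu)$ with $\abs{a-z_-}\le\eps'$ and $\dist(a,S)\gtrsim\eps'$. In the plane such a point exists by the $\eps$-separation of $S$, not by (2). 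You cannot in general ask for $\dist(a,S)\gtrsim\ks(\mu_S,\mu)^{1/\beta}$, but $\eps'$ suffices since $\log(1/\eps')=O(\log(1/s))$.

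Nothing in the hypotheses puts such a point in $\supp(\nu)$, and without it the lower bound is false. Let $\mu$ be uniform on the arc $\set{e^{it}:0\le t\le 2\pi a}$ with $a<1$, let $S$ be $n+1$ equispaced points on it including $1$ (with $n$ odd), and let $\nu=\mu_S$. Then $\max_k\sup_{\supp(\nu)}\abs{L_k}=1$, so the left side is $0$. Meanwhile $\Delta\ge U^\mu(e^{i\pi a})-U^\mu(1)=\Cl(\pi(1-a))/(\pi a)>0$, and the error terms are $O(\log n/n+n^{-\alpha})$.

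The paper's proof has the same hole. It picks $a\in\ball(z_-,(n+1)^{1/2}r)\setminus\bigcup_j\ball(z_j,r)$ as a point of the plane and then uses $U^{\mu_k}(z_-^{(k)})\le U^{\mu_k}(a)$, which requires $a\in\supp(\nu)$. The missing ingredient is a hypothesis of this kind on $\nu$; it holds, for example, when $\supp(\nu)=\sph1\supset S$, the setting of \Cref{a3}. Granting it, your main perturbation route (not the fallback) completes the proof as the paper intends.
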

\begin{proof}
Denote
\[\mu_k=\frac1n\sum_{j\neq k}\delta_{z_j},\quad
\mu_S=\frac1{n+1}\sum_{j=0}^n\delta_{z_j}.\]
and\[z_+=\argmax\limits_{z\in\supp(\mu)} U^\mu(z),\quad z_-=\argmin\limits_{z\in\supp(\nu)} U^\mu(z),\quad
z_-^{(k)}=\argmin\limits_{z\in\supp(\nu)}U^{\mu_k}(z)
.\]
Observe by definition that
\[\sup_{z\in\supp(\nu)}\frac{\log L_k(z)}n=U^{\mu_k}(z_k)-U^{\mu_k}(z_-^{(k)}).\]
We claim that
\eq{\label{a55}\frac12\ks(\mu_S,\mu)\le\ks(\mu_k,\mu)\le2\ks(\mu_S,\mu).}
To see this, note that condition (3) implies $\mu(\set z)=0$ for any $z$. In particular, we have $\ks(\mu_k,\mu_S)=\frac1{n+1}\le\min\pare{\ks(\mu,\mu_k),\ks(\mu,\mu_S)}$. \cref{a55} follows by the triangle inequality. Now set \[\eps'=\min\pare{\eps,(\beta\ks(\mu_S,\mu)/2\rho_1)^{1/\beta}}\]
and observe that $\eps'$ satisfies the hypothesis of \Cref{a50} for $\mu_d=\mu_k$, $\mu_c=\mu$, and $z=z_k$.
Define $\delta(r)=2\log\pare{\frac{eR}r}\ks(\mu_S,\mu)$ so that \Cref{a50} along with \cref{a55} and the definition of $z_\pm$ implies
\spliteq{}{
U^{\mu_k}(z_k)
\le U^\mu(z_k)+\log\pare{\frac{eR}{\eps'}}\ks(\mu_k,\mu)
\le U^\mu(z_k)+\delta(\eps')
\le U^\mu(z_+)+\delta(\eps')}
and also
\spliteq{}{
U^{\mu_k}(z_-^{(k)})
\ge U^\mu(z_-^{(k)})-\log\pare{\frac{eR}{\eps'}}\ks(\mu_k,\mu)
\ge U^\mu(z_-^{(k)})-\delta(\eps')
\ge U^\mu(z_-)-\delta(\eps').}
Noting that $\log(R/{\eps'})=O\pare{\log\frac1{\min(\eps,\ks(\mu_S,\mu))}}$ gives the first inequality \cref{a54}.
For second inequality, we start by again applying \Cref{a50},
\spliteq{}{
\max_{0\le k\le n} U^{\mu_k}(z_k)
  &\ge\max_{0\le k\le n}U^\mu(z_k)-\delta(\eps')
\\&\ge U^\mu(z_+)-\delta(\eps')-C\min_{0\le k\le n}\abs{z_k-z_+}^\alpha
\\&= U^\mu(z_+)-\delta(\eps')-C\dist(z_+,S)^\alpha.
}
Notice that
\(\mu_S\pare{ \ball(z_+,\dist(z_+,S)) }=0\)
so
\[\ks(\mu,\mu_S)\ge\mu\pare{ \ball(z_+,\dist(z_+,S)) }\ge\rho_2\dist(z_+,S)^\beta.\]
Applying \Cref{a50} one last time,
\spliteq{}{
U^{\mu_k}(z_-^{(k)})
\le U^{\mu_k}(a)
  &\le U^\mu(a)+\delta(\dist(a,S))
\\&\le U^\mu(z_-)+\delta(\dist(a,S))+C\abs{a-z_-}^\alpha
.}
Our final step is to properly select $a$. We pick $a$ to be any element of the set
\[\ball(z_-,(n+1)^{1/2}r)\backslash\bigcup_{j=0}^n\ball(z,r)\]
for $r=\frac1{2R}(\eps')^2$, which is nonempty by considering the areas of each ball.
Since $S$ is contained in a disk of radius $R$, the points cannot all be pairwise for apart. In particular, by area considerations we must have $(n+1)^{1/2}\eps<2R$.
Consequently,
\[\abs{a-z_-}\le(n+1)^{1/2}r<\eps'\le(\beta\ks(\mu_S,\mu)/\rho_1)^{1/\beta}.\]
Additionally, by construction $\dist(a,S)\ge r$ and $\delta(r)=O\pare{ \log(R/\eps')\ks(\mu_S,\mu) }$, so we have the final result.
\end{proof}

\begin{proof}[Proof of \Cref{a2}]
This is a direct combination of \Cref{a19} and \Cref{a51}.
\end{proof}

{}

\section*{Acknowledgments}
This material is based upon work supported by the National Science Foundation under grant no. DMS-2513687. The second author thanks Alex Barnett for introducing them to the problem. The authors thank John Tebou for interesting conversations on the subject and Louisa Thomas for improving the style of presentation.

\bibliographystyle{plain}
\bibliography{outbib}

\end{document}